\documentclass[a4paper]{article} %,draft
\usepackage{a4wide}
\usepackage[utf8]{inputenc}
\usepackage[ngerman, english]{babel}
\usepackage{amsmath,amsthm,amssymb,amsfonts}
\usepackage{mathrsfs}
\usepackage{ifthen}
\usepackage{enumerate}
\usepackage{comment}
\usepackage{subcaption}
\usepackage{hyphsubst}
\usepackage{hyperref}
\usepackage{hyphsubst}
\usepackage{cleveref}
\usepackage[dvipsnames]{xcolor}
\usepackage[skins]{tcolorbox}
\usepackage{calc}
\usepackage{graphicx}
\usepackage{tabularx}
\usepackage[toc,page]{appendix}
\usepackage[bbgreekl]{mathbbol}
\DeclareSymbolFontAlphabet{\mathbbm}{bbold}
\DeclareSymbolFontAlphabet{\mathbb}{AMSb}

\usepackage[utf8]{inputenc}
\usepackage[ngerman, english]{babel}
\usepackage{amsmath,amsthm,amssymb,amsfonts}
\usepackage{mathrsfs}
\usepackage{ifthen}
\usepackage{enumerate}
\usepackage{comment}
\usepackage{subcaption}
\usepackage{hyphsubst}
\usepackage{xcolor}
\usepackage{calc}
\usepackage{graphicx}
\usepackage{hyperref}
\usepackage[toc,page]{appendix}
\usepackage{cleveref}
\usepackage{leftindex}
\usepackage{pgfplots}
\pgfplotsset{compat=1.5.1}
\usepackage[bbgreekl]{mathbbol}

\makeatletter
\def\@hspace#1{\begingroup\setlength\dimen@{#1}\hskip\dimen@\endgroup}
\makeatother

\newtheorem{theorem}{Theorem}[section]
\newtheorem{lemma}[theorem]{Lemma}
\newtheorem{definition}[theorem]{Definition}

\newtheorem{remark}[theorem]{Remark}
\newtheorem{corollary}[theorem]{Corollary}

\newtheorem{proposition}[theorem]{Proposition}

\newtcbox{\mybox}{on line,
  colframe=white!10!white,colback=Goldenrod,
  boxrule=0.5pt,arc=4pt,boxsep=0pt,left=6pt,right=6pt,top=6pt,bottom=6pt}

\def\ljump{[\![}
\def\rjump{]\!]}
\def\WG{\Omega}
\def\domcs{\Gamma}
\def\intf{\Sigma}
\def\Rp{\mathbb{R}^+}

\def\Nz{\mathbb{N}_0}

\def\indL{N_{\domcs}}
\def\indn{n}
\def\indm{m}
\def\deltaem{\delta_{\epsilon,\mu}}
\def\deltaemt{\tilde\delta_{\epsilon,\mu}}
\def\epsmax{\epsilon_\mathrm{max}}
\def\epsmin{\epsilon_\mathrm{min}}
\def\mumax{\mu_\mathrm{max}}
\def\mumin{\mu_\mathrm{min}}
\def\gradF{\nabla_\mathrm{3D}}
\def\curlF{\curl_\mathrm{3D}}
\def\divF{\div_\mathrm{3D}}
\def\gradg{\nabla_{\domcs}}
\def\divg{\div_{\domcs}}
\def\curlg{\curl_{\domcs}}
\def\Curlg{\Curl_{\domcs}}
\def\nvf{\boldn}
\def\nvt{\hat\boldn}
\def\tv{\hat\boldt}
\def\rotm{R}
\def\xf{\boldx}
\def\xt{\hat\boldx}
\def\Eff{\mathcal{E}}
\def\Hff{\mathcal{H}}
\def\Uff{\mathcal{G}}
\def\Ef{\mathbb{E}}
\def\Hf{\mathbb{H}}
\def\Uf{\mathbb{G}}
\def\Et{\boldE_T}
\def\Ht{\boldH_T}
\def\Ut{\boldG_T}
\def\Ex{E_1}
\def\Ey{E_2}
\def\Ez{E_3}
\def\Hx{H_1}
\def\Hy{H_2}
\def\Hz{H_3}
\def\Ux{G_1}
\def\Uy{G_2}
\def\Uz{G_3}
\def\Etk{\boldE_{T,k}}
\def\Ezk{E_{3,k}}
\def\Htk{\boldH_{T,k}}
\def\Hzk{H_{3,k}}
\def\bL{\boldL}
\def\Lspace{\boldL} % vectorial Lebegues space
\def\Hspace{\boldH} % vectorial Sobolev space
\def\BLO{\mathcal{L}}
\def\Vspace{\boldV}
\def\Vpotspace{V^p}
\def\Wm{W_m}
\def\Vm{V_m}
\def\Xspace{X}
\def\Hcd{\Hspace(\curlg,\divg\mu^{-1}0;\Omega)}

\def\bv{\boldv}
\def\Deltamu{\Delta_{\mu}}
\def\Deltaeps{\Delta_{\epsilon}}
\def\trtang{\tr_{\tv}}
\def\Meps{M_\epsilon}
\def\Mmui{M_{\mu^{-1}}}
\def\D{\mathbf{D}} % nabla
\def\embL{E_{L^2}}
\def\MepsVec{\mathbf{M}_\epsilon}
\def\MmuiVec{\mathbf{M}_{\mu^{-1}}}
\def\DVec{D} % curl
\def\embLVec{\mathbf{E}_{L^2}}
\def\Aop{A}
\def\Bop{B}
\def\Kop{K}
\def\ases{a}
\def\bses{b}
\def\kses{k}
\def\Cdir{c_\mathrm{Dir}}
\def\Ctrhi{C_\mathrm{tr,1}}
\def\Ctrhii{C_\mathrm{tr,2}}
\def\CBop{C_{\Bop}}
\def\Cdet{C_{\ref{lem:det}}}
\def\Ceftrace{C_\mathrm{tr,3}}
\def\Cvec{C_\mathrm{vec}}
\def\Cab{C_\mathrm{ab}}
\def\CAiu{C_\mathrm{Ai}}
\def\cAil{c_\mathrm{Ai}}
\def\Cnl{c_\mathrm{ind}}
\def\Cnu{C_\mathrm{ind}}
\def\Cjrat{C_{\mathrm{rat}}}
\def\Col{C_{\mathrm{ol}}}
\def\clb{c_\mathrm{lb}}
\def\jtmp{j^\mathrm{tmp}}
\def\jttmp{\tilde j^\mathrm{tmp}}
\def\xtmp{x^\mathrm{tmp}}
\def\xttmp{\tilde x^\mathrm{tmp}}
\def\troot{t}
\def\ttroot{\tilde t}
\def\cgap{c_\mathrm{gap}}
\def\cgapt{\tilde c_\mathrm{gap}}
\def\cgapai{c_\mathrm{\Ai,gap}}
\def\Col{C_\mathrm{ol}} % overlap
\def\Crjy{C_{JY}}
\def\Crjj{C_{JJ}}
\def\Crrjjyy{C_{JJYY}}
\def\qclose{c}
\def\cgka{c_{\gamma,\kappa,\check\alpha}}
\def\Cgka{C_{\gamma,\kappa,\check\alpha}}
\def\MB{M_{\check\Bop}}
\def\Amat{\ul{A}}
\def\AmatNeu{\Amat^\mathrm{Neu}}
\def\AmatDir{\Amat^\mathrm{Dir}}
\def\Bmat{\ul{B}}
\def\BmatNeu{\Bmat^\mathrm{Neu}}
\def\BmatDir{\Bmat^\mathrm{Dir}}
\def\vvec{\ul{v}}
\def\wvec{\ul{w}}

\def\Gammaf{Gamma}
\def\Ai{\operatorname{Ai}}
\def\Bi{\operatorname{Bi}}
\def\Aiz{a\hspace{-1.5mm}a}
\def\Aipz{\Aiz'}
\def\Aizc{\hat\Aiz}

\def\besselj{J}
\def\besseljz{j}
\def\besseljzc{\hat j}
\def\besseljpz{j'}
\def\bessely{Y}
\def\besselyz{y}

\def\mushift{\bbmu_m}
\def\omshift{\bbomega_m}
\def\phaseB{\mathcal{B}_m}
\def\rasy{q}
\def\tstar{t_*}
\def\tstari{t_{1*}}
\def\mstar{m_*}
\def\nstar{n_*}
\def\nshift{n_\mathrm{sh}}
\def\evsadir{\tilde j}
\def\evsaneu{\tilde j'}
\def\evsac{\hat{\tilde j}}

\newcommand{\tf}{^\dagger}
\newcommand{\dd}{\mathrm{d}}

\newcommand{\ol}[1]{\overline{#1}}
\newcommand{\ul}[1]{\underline{#1}}
\newcommand{\spl}{\langle}
\newcommand{\spr}{\rangle}
\newcommand{\bpm}{\begin{pmatrix}}
\newcommand{\epm}{\end{pmatrix}}

\renewcommand{\Re}{\operatorname{Re}}
\renewcommand{\Im}{\operatorname{Im}}

\DeclareMathOperator{\curl}{curl}
\DeclareMathOperator{\Curl}{Curl}

\DeclareMathOperator{\diag}{diag}

\renewcommand{\div}{\operatorname{div}}
\DeclareMathOperator{\dom}{dom}

\DeclareMathOperator{\tr}{tr}

\newcommand{\setC}{\mathbb{C}}

\newcommand{\setN}{\mathbb{N}}

\newcommand{\setR}{\mathbb{R}}

\newcommand{\setZ}{\mathbb{Z}}

\newcommand{\boldn}{\mathbf{n}}

\newcommand{\boldt}{\mathbf{t}}
\newcommand{\boldu}{\mathbf{u}}
\newcommand{\boldv}{\mathbf{v}}

\newcommand{\boldx}{\mathbf{x}}

\newcommand{\boldE}{\mathbf{E}}

\newcommand{\boldG}{\mathbf{G}}
\newcommand{\boldH}{\mathbf{H}}

\newcommand{\boldL}{\mathbf{L}}

\newcommand{\boldV}{\mathbf{V}}

\newcommand{\boldY}{\mathbf{Y}}

\newcommand{\calH}{\mathcal{H}}
\newcommand{\calI}{\mathcal{I}}

\newcommand{\calT}{\mathcal{T}}

\newcommand{\calX}{\mathcal{X}}

\definecolor{brickred}{rgb}{0.8, 0.25, 0.33}
\definecolor{bostonuniversityred}{rgb}{0.8, 0.0, 0.0}
\definecolor{cornellred}{rgb}{0.7, 0.11, 0.11}
\definecolor{corn}{rgb}{0.98, 0.93, 0.36}
\definecolor{schoolbusyellow}{rgb}{1.0, 0.85, 0.0}
\definecolor{TUblue}{rgb}{0,102,153}
\colorlet{TUbluelight}{TUblue!30!white}

\usepackage{fancyhdr}
\usepackage[backend=biber,
	giveninits=true,
	maxbibnames=8,
	date=year,
	doi=true,
	url=false,
	isbn=false,
	eprint=false]{biblatex}
\AtEveryBibitem{\clearlist{language}} % supress language field
\renewbibmacro{in:}{%
  \ifboolexpr{%
     test {\ifentrytype{article}}%
     or
     test {\ifentrytype{inproceedings}}%
  }{}{\printtext{\bibstring{in}\intitlepunct}}%
}
\title{Modal bases of coaxial electromagnetic step index fibers
\thanks{
The author acknowledges funding from Deutsche Forschungsgemeinschaft (DFG, German Research Foundation), projects 541433971 and 258734477 - SFB 1173.
}
}
\author{Martin Halla%
\thanks{%
Karlsruhe Institute of Technology,
Institute for Applied and Numerical Mathematicsa,
Englerstra\ss e 2,
76131 Karlsruhe, Germany
(martin.halla@kit.edu)
}
}

\begin{document}

% d/dx log((x+c)/(x-c))/(4c^3)+x/(2c^2(c^2-x^2)) = 1/(x^2-c^2)

\date{}
\maketitle
\begin{abstract}
\noindent
We consider the eigenvalue problem to find the modes of an electromagnetic coaxial step index fiber.
More specific, we consider a closed (meaning PEC boundary conditions) cylindrical waveguide with circular cross section $\domcs$, wave propagation modeled by the time-harmonic Maxwell's equations with frequency $\omega$, the permeability $\mu$ and the permittivity $\epsilon$ being scalar, uniformly positive, piece-wise constant and depending only on the radial variable of the cross section.
We prove that if the deviation from the homogeneous case is small, i.e., $\deltaem:=\|\epsilon-\epsilon_0\|_{L^\infty}+\|\mu-\mu_0\|_{L^\infty}\ll1$, then
the tangential electric (magnetic) fields of the modes form a Riesz basis in $\Hspace_{0}(\curl_{\domcs};\domcs)$ ($\Hspace(\curl_{\domcs};\domcs)$).
For a constant permeability (permittivity) the Riesz basis property for the tangential electric (magnetic) fields holds also in the natural trace space $\Hspace_{0}^{-1/2}(\curl_{\domcs};\domcs)$ ($\Hspace^{-1/2}(\curl_{\domcs};\domcs)$).
These results hold also for complex frequencies $\omega$.
In addition, if $\omega\in\setR$, then for small enough $\deltaem$ all wavenumbers are located on the axes and there exist no backward modes.
Key tools in the analysis are a particular reformulation of the eigenvalue problem, the perturbation theory for selfadjoint operators under a local subordinate condition and uniform properties of Bessel functions.
\\

\noindent
\textbf{MSC:} 35P05, 35R05, 78M35
\\
%35P05 General topics in linear spectral theory for PDEs
%35P25 Scattering theory for PDEs
%35R05 PDEs with low regular coefficients and/or low regular data
%35R30 Inverse problems for PDEs
%65N25 Numerical methods for eigenvalue problems for boundary value problems involving PDEs
%78M10 Finite element, Galerkin and related methods applied to problems in optics and electromagnetic theory
%78M35 Asymptotic analysis in optics and electromagnetic theory

\noindent
\textbf{Keywords:} waveguide, step index fiber, coaxial cable, Maxwell's equations, eigenvalue perturbation analysis, Riesz basis
\end{abstract}

%\tableofcontents

\section{Introduction}\label{sec:introduction}

Electromagnetic waveguides play an important role, e.g., for telecommunication~\cite{ImperialeJoly14}, %Imperiale, Joly ’14; Beck, Imperiale, Joly ’15, ’20; Hamad, Beck, Imperiale, Joly ’22
energy transfer~\cite{cmacs2003baseline}
%\cite{Leclerc25}
and as optical fiber amplifiers to produce highly coherent laser outputs \cite{JaureguiLimpertTunnermann13}.
A modal analysis and basis properties of the modes are important, e.g., for the construction of Dirichlet-to-Neumann/Calderon operators, the analysis of stability constants, and the justification of numerical transparent boundary conditions (in particular perfectly matched layers).
In this article we are interested in the modes of a closed (meaning PEC boundary conditions) cylindrical waveguide with circular cross section, wave propagation modeled by Maxwell’s equations, the permeability $\mu$ and the permittivity $\epsilon$ being scalar, uniformly positive, piece-wise constant and depending only on the radial variable of the cross section, i.e., a step index fiber.
In applications the deviation from a homogeneous index profile is usually small (see, e.g., \cite[Table~3]{MoraPDemkowiczEtal25}).
A particular motivation for this work is the study of \emph{long} optical fibers and we refer to the introduction of \cite{MelenkDemkowiczHenneking25} for references regarding the state of the art of simulating nonlinear optical fiber amplifiers.
The relevance of this topic is evident by the large number of books on \emph{optical fibers}, e.g., \cite{Marcuse13,SheSmirSmo22} to name only a few.
We also mention studies on twisted and open waveguides \cite{Leclerc25,BambergerBonnet90,JolyPoirier95,JolyPoirier99} and recent works on nonlinear fibers \cite{HennekingGrosekDemkowicz23,HennekingGrosekDemkowicz25,MelenkDemkowiczHenneking25,DrakeGopalakrishnanGoswamiGrosek20}, \cite{SheSmolSmi19,SheSmirSmo22,AbgaryanShestopalovRomanov23}.
\\

For homogeneous materials it is well known that the modal problem reduces to the study of scalar Helmholtz equations, see, e.g., \cite{Kim17}.
However, as soon as the material is heterogeneous the situation changes and the mathematical effort to analyze the modes dramatically increases.
Indeed, in this case two previously decoupled equations are now coupled and the selfadjoint nature (in the convenient sense) is violated.
The modal eigenvalue problem (EVP) can still be considered to be a selfadjoint, linear EVP in a Krein space or to be a quadratic EVP with selfadjoint operator coefficients in a Hilbert space, but these considerations do not yield much benefit.
Thus even the question to establish the completeness of modes \cite{SheSmirSmo22,SheSmi13b,Delitsyn00,HallaMonk24} poses severe mathematical challenges.
Nevertheless, the main result of this article is a much stronger property than completeness, namely a Riesz basis property of the modes.
% \cite[Conclusion]{MalykhSevastyanova19}
Although our analysis requires significant effort and assumes certain non-trivial assumptions on the geometry of the waveguide (circular step index fiber), let us mention that we do not even touch more sophisticated setups such as bent \cite{DemkowiczGopalaKrishnanHeuer24,DemkowiczHallaMelenk26,MoraPDemkowiczEtal25}, twisted \cite{GopalaNeunteufel25,Leclerc25} or open \cite{BambergerBonnet90,JolyPoirier95,JolyPoirier99} waveguides.\\

We are going to analyze the modal eigenvalue problem as a non-selfadjoint perturbation of a selfadjoint operator.
Spectral perturbation theory for non-selfadjoint operators is a rather challenging topic and we refer to the books \cite{GohbergKrein69,Markus88,Jeribi21}.
The convenient approach to this end is to assume that the perturbation satisfies a subordination condition and we refer to the introduction of \cite{MityaginSiegl24} for further references in this regard.
However, recently \cite{MityaginSiegl19} reported an important generalization of this notion, which exchanges the subordination condition with a \emph{local} version, and \cite{DemkowiczHallaMelenk26} gave some further extensions in this framework.
The latter two will be a main tool for our analysis.
In particular, the theory of \cite{MityaginSiegl19} assumes certain properties of the unperturbed selfadjoint operator.
In our case the selfadjoint operator is itself a perturbation of the operator for homogeneous materials.
Since this (selfadjoint) perturbation is not subordinate in any suitable sense, we are going to establish those necessary properties ``by hand'' by analyzing Bessel functions.
Here a crucial point is to derive all estimates in a uniform way.
\\

The remainder of this article is structured as follows.
In \Cref{sec:evp} we introduce our setting, the modal eigenvalue problem~\eqref{eq:evp-full} to be studied, and derive a suitable reformulation~\eqref{eq:evp} of the former.
In \Cref{sec:bessel} we study certain uniform properties of Bessel functions.
In \Cref{sec:perturbation-sa} we apply those properties to establish the necessary requirements for the selfadjoint perturbation.
Finally, in \Cref{sec:perturbation-nsa} we apply \cite{MityaginSiegl19,DemkowiczHallaMelenk26} and establish our main result in~\Cref{thm:main}.
In \Cref{sec:app} we give a result on the location of the wavenumbers under general geometry assumptions.

\section{The modal eigenvalue problem}\label{sec:evp}

\subsection{Setting and assumptions}\label{subsec:setting}

We consider a cylindrical waveguide $\WG:=\domcs\times\Rp$ with bounded Lipschitz cross section $\domcs\subset\mathbb{R}^2$, and identify $\domcs$ with $\domcs\times\{0\}$.
Let $\nvf$ and $\nvt$ be the outward unit vectors of $\WG$ and $\domcs$ respectively, which are related by $\nvf=(\nvt,0)$ on $\partial\domcs\times\Rp$.
We denote the convenient differential operators in 3D as follows
\[
\gradF u:=\bpm \partial_x u\\\partial_y u\\ \partial_z u\epm,\quad
\curlF\bpm u_1\\u_2\\ u_3\epm:=\gradF\times\bpm u_1\\u_2\\ u_3\epm,\quad
\divF\bpm u_1\\u_2\\ u_3\epm:=\gradF\cdot\bpm u_1\\ u_2\\ u_3\epm.
\]
%\[
%\gradF u:=(\partial_x u,\partial_y u, \partial_z u)^\top,\quad
%\curlgF\boldu:=\gradF\times\boldu,\quad
%\divgF\boldu:=\gradF\cdot\boldu.
%\]
On $\domcs$ we introduce the rotation operator
\[
\rotm:=\left(\begin{array}{cc} 0&-1\\1&0\end{array}\right),
\]
and the surface differential operators
\begin{align*}
\curlg(u_1,u_2)^\top&:=\partial_x u_{2}-\partial_y u_{1},&
\Curlg u&:=(\partial_y u,-\partial_x u)^\top,\\
\divg(u_1,u_2)^\top&:=\partial_x u_{1}+\partial_y u_{2},&
\gradg u&:=(\partial_x u,\partial_y u)^\top,
\end{align*}
for which we note that
\begin{align*}
\rotm^2=-I,\quad
\gradg u=\rotm\Curlg u,\quad
\rotm\gradg u=-\Curlg u,\quad
\curlg (\rotm\boldu)=\divg \boldu,\quad
\divg (\rotm\boldu)=-\curlg \boldu
\end{align*}
for $\boldu=(u_1,u_2)^\top$.
%$\rotm^2=-I$, $\curlg (\rotm\boldu)=\divg \boldu$, $\gradg u=\rotm\Curlg u$ and $\rotm\gradg u=-\Curlg u$, $\divg (\rotm\boldu)=-\curlg \boldu$ for $\bv=(u_1,u_2)^\top$.
In addition, let $\tv:=-\rotm\nvt$ be the unit tangential vector on $\partial\domcs$.
Let $B_r(x_0):=\{x\colon |x-x_0|<r\}$ be the ball with radius $r>0$ and center $x_0$, where the default space (e.g., $\mathbb{C}$, $\mathbb{R}^2$, some Hilbert space $X$) for $x,x_0$ will always be clear from the context.
Further let $B_r:=B_r(0)$.
The magnetic permeability $\mu$ and the electric permeability $\epsilon$ of the material in the waveguide are assumed to be real valued, bounded and uniformly positive scalar functions which depend only on $x$ and $y$, i.e., $\mu,\epsilon,\mu^{-1},\epsilon^{-1}\in L^\infty(\domcs;\Rp)$.
Let $\epsmax:=\sup_{\xt\in\domcs}\epsilon(\xt)$,
$\epsmin:=\inf_{\xt\in\domcs}\epsilon(\xt)$,
$\mumax:=\sup_{\xt\in\domcs}\mu(\xt)$,
$\mumin:=\inf_{\xt\in\domcs}\mu(\xt)$.
Let $(r,\theta)$ be polar coordinates for $\xt\in\setR^2$ and $(r,\theta,z)$ be cylindrical coordinates for $\xf\in\setR^3$.
We assume that $\epsilon$ and $\mu$ depend only on $r$ and use the overloaded notation $\epsilon(\xf)=\epsilon(\xt)=\epsilon(r)$ with $\xf\in\setR^3,\xt\in\setR^2,r>0$, etc..
Let
$\domcs:=\{(x,y)\in\mathbb{R}^2\colon x^2+y^2<1\}$,
$\indL\in\mathbb{N}$,
$0=:r_1<\dots<r_{\indL+1}:=1$,
$\epsilon_0,\epsilon_1,\dots,\epsilon_{\indL+1}>0$,
$\mu_0,\mu_1,\dots,\mu_{\indL+1}>0$,
$\epsilon|_{(r_{l},r_{l+1})}=\epsilon_l$,
$\mu|_{(r_{l},r_{l+1})}=\mu_l$, $l=1,\dots,\indL$,
where $\epsilon_0$ and $\mu_0$ take the role of reference values to measure the deviation of $\epsilon$ and $\mu$ from homogeneous materials.
To quantify the deviation from the homogeneous material we abbreviate
\begin{align*}
\deltaem:=\|\epsilon_0-\epsilon\|_{L^\infty(\domcs)}+\|\mu_0-\mu\|_{L^\infty(\domcs)}
\quad\text{and}\quad
\deltaemt:=\|\epsilon_0\mu_0-\epsilon\mu\|_{L^\infty(\domcs)}.
\end{align*}
To simplify constants appearing in the forthcoming analysis we assume further that
\begin{align*}
	\frac{\epsilon_0}{2}\leq \epsilon \leq 2\epsilon_0, \qquad
	\frac{\mu_0}{2}\leq \mu \leq 2\mu_0.
\end{align*}
We introduce the annuli $\domcs_l:=B_{r_{l+1}}\setminus\ol{B_{r_l}}$, $l=1,\dots,\indL$ and the interfaces $\intf_l:=\partial B_{r_{l+1}}$, $l=1,\dots,\indL-1$, $\intf:=\bigcup_{l=1}^{\indL-1}\intf_l$ with the convention that the unit normal vector $\nvt$ on $\intf_l$ points to infinity.
For a piece-wise smooth function $u\colon \domcs\to\mathbb{C}$ let $\ljump u\rjump:=u|_{\domcs_{l+1}}-u|_{\domcs_l}$ on $\intf_l$.
The temporal frequency will be denoted as $\omega\in\mathbb{C}\setminus\{0\}$, the wavenumber in longitudinal direction $(0,0,1)$ as $\beta\in\setC$, $\alpha:=-i\beta$, $\nu:=\sqrt{\alpha^2+\omega^2\epsilon_0\mu_0}$, and the circumferential index as $m\in\setZ$.\\
\fbox{
\begin{minipage}{\textwidth}
\textbf{We consider $\omega$, $\indL$, $(r_l)_{l=1,\dots,\indL+1}$ and $\epsilon_0,\mu_0$ to be fixed, i.e., constants may depend on these parameters.
	On the other hand all appearing constants will be independent of $\nu$ (and $\alpha$, $\beta$), $(\epsilon_l,\mu_l)_{l=1,\dots,\indL}$, $m$ and exemptions will be indicated by indices!
}
\end{minipage}
}
\begin{definition}\label{def:cut-off}
The frequency $\omega$ is called a cut-off frequency, if one of the following two equations
\begin{align*}
-\divg(\mu^{-1}\gradg u)-\omega^2\epsilon u &=0 \quad\text{in }\domcs,\quad\phantom{\partial_{\nvt}}u=0\quad\text{on }\partial\domcs,\\
-\divg(\epsilon^{-1}\gradg u)-\omega^2\mu u &=0 \quad\text{in }\domcs,\quad \partial_{\nvt}u=0\quad\text{on }\partial\domcs,
\end{align*}
admits a non-trivial solution.
\end{definition}
Throughout the article we assume that $\omega$ is not a cut-off frequency for the homogeneous case $\epsilon=\epsilon_0$, $\mu=\mu_0$.
\\
The complex valued scalar products and norms of $L^2(\domcs)$ and $\Lspace^2(\domcs):=(L^2(\domcs))^2$ are simultaneously denoted as $\spl\cdot,\cdot\spr_{\domcs}$ and $\|\cdot\|_{\domcs}$ respectively;
and in the same fashion $\spl\cdot,\cdot\spr_\intf$ and $\|\cdot\|_\intf$ refer to the scalar products and norms of $L^2(\intf)$ and $\Lspace^2(\intf):=(L^2(\intf))^2$ respectively.
%\mhnote{$L^2_{\sigma*}$, $\Hspace_0^s(\curlg;\domcs)$}
We use a convenient notation for Sobolev spaces, e.g., $\Hspace(\curlg;\domcs)$, $\Hspace_0(\curlg;\domcs)$, $H^1(\domcs)$, $H^1_0(\domcs)$ and $H^1_{\sigma*}(\domcs):=\{u\in H^1(\domcs)\colon \spl \sigma u,1\spr_{\domcs}=0\}$ for $\sigma\in L^\infty(\domcs)$.
%$H^1_*(\domcs)$ \mhrev{be equipped with the scalar products $\spl u,u\tf\spr_{H^1_0(\domcs)}:=\spl \gradg u,\gradg u\tf \spr_{\domcs}$, $u,u\tf\in H^1_0(\domcs)$ and $\spl v,v\tf\spr_{H^1_*(\domcs)}:=\spl \gradg v,\gradg v\tf \spr_{\domcs}$, $v,v\tf\in H^1_*(\domcs)$ respectively.}
All function spaces will be considered over the scalar body $\mathbb{C}$.
For generic Hilbert spaces $X,Y$ let $\BLO(X,Y)$ be the vector spaces of bounded, linear operators from $X$ to $Y$, and set $\BLO(X):=\BLO(X,X)$.
The product $X\times Y$ of two Hilbert spaces $X,Y$ is equipped with the convenient scalar product $\spl (u,v),(u\tf,v\tf)\spr_{X\times Y}:=\spl u,u\tf\spr_{X}+\spl v,v\tf\spr_{Y}$.

\subsection{The modal eigenvalue problem}

Let $\Eff, \Hff$ denote the three-dimensional time-harmonic electric and magnetic fields, solving the Maxwells equation
\begin{subequations}\label{eq:MW-3D}
\begin{eqnarray}
\epsilon \partial_t \Eff &=& \curlF \Hff \quad\text{in }\WG,\\
\mu \partial_t \Hff &=& -\curlF \Eff \quad\text{in }\WG,
\end{eqnarray}
together with the boundary conditions on $\partial\domcs\times\Rp$
\begin{eqnarray}
	\nvf\times\Eff &= 0 \quad\text{on }\partial\domcs\times\Rp,\\
	\nvf\times\curlF\Hff &= 0 \quad\text{on }\partial\domcs\times\Rp.
\end{eqnarray}
\end{subequations}
Let $\xt:=(x,y)$ and $\xf:=(x,y,z)$.
For a given angular frequency $\omega\in\mathbb{C}$ we seek solutions of the form
\[
\Eff(t,\xf)=\Ef(\xt)\exp(i\beta z)\exp(-i\omega t),
\qquad
\Hff(t,\xf)=\Hf(\xt)\exp(i\beta z)\exp(-i\omega t),
\]
where $\beta\in\mathbb{C}$ is the unknown wavenumber in longitudinal direction.
Henceforth we will work mainly with the rotated wavenumber
\[\alpha:=-i\beta\]
instead, because it will be more convenient for our analysis.
Note that if
$\Uff(\xf)=\Uf(\xt)\exp(-\alpha z)$
%$\Uff(\xf)=\Uf(\xt)\exp(i\beta z)$
with $\Uf=(\Ux,\Uy,\Uz)^\top$ and $\Ut:=(\Ux,\Uy)^\top$, then
\[
\curlF\Uff=\bpm \partial_y \Uz+\alpha \Uy\\-\partial_x \Uz-\alpha \Ux\\ \partial_x \Uy-\partial_y \Ux\epm \exp(-\alpha z)
=\bpm \Curlg \Uz-\alpha R\Ut  \\ \curlg\Ut\epm \exp(-\alpha z).
%\curlgF\Uff=\bpm \partial_y \Uz-i\beta \Uy\\-\partial_x \Uz+i\beta \Ux\\ \partial_x \Uy-\partial_y \Ux\epm \exp(i\beta z)
%=\bpm \Curlg \Uz+i\beta R\Ut  \\ \curlg\Ut\epm \exp(i\beta z).
\]
So with $\Et:=(\Ex,\Ey)^\top$, $\Ht:=(\Hx,\Hy)^\top$ \eqref{eq:MW-3D} is equivalent to
\begin{subequations}\label{eq:modal-full}
\begin{align}
%i\omega\epsilon \bpm\Et\\\Ez\epm &= -\bpm \Curlg \Hz+i\beta \rotm\Ht \\ \curlg\Ht\epm \quad\text{in }\domcs,\\
%i\omega\mu \bpm\Ht\\\Hz\epm &= \bpm \Curlg \Ez+i\beta \rotm\Et \\ \curlg\Et\epm \quad\text{in }\domcs,\\
\label{eq:modal-full-Et}
-i\omega\epsilon \Et &= \Curlg \Hz-\alpha\rotm\Ht \quad\text{in }\domcs,\\
%i\omega\epsilon \Et &= -\Curlg \Hz+i\beta\rotm\Ht \quad\text{in }\domcs,\\
\label{eq:modal-full-Ez}
-i\omega\epsilon \Ez &= \curlg\Ht \quad\text{in }\domcs,\\
\label{eq:modal-full-Ht}
-i\omega\mu \Ht &= -\Curlg \Ez+\alpha \rotm\Et \quad\text{in }\domcs,\\
%i\omega\mu \Ht &= \Curlg \Ez+i\beta \rotm\Et \quad\text{in }\domcs,\\
\label{eq:modal-full-Hz}
-i\omega\mu \Hz &= -\curlg\Et \quad\text{in }\domcs,\\
\tv\Ez &=0 \quad\text{on }\partial\domcs,\\
\tv\cdot\Et &=0 \quad\text{on }\partial\domcs,\\
\partial_{\nvt}\Hz &=0 \quad\text{on }\partial\domcs,\\
\curlg\Ht &=0 \quad\text{on }\partial\domcs.
\end{align}
\end{subequations}
Altogether we are interested in the eigenvalue problem to
\begin{align}\label{eq:evp-full}
\text{find } (\alpha,(\Et,\Ez,\Ht,\Hz)) \in\mathbb{C}\times(\Lspace^2(\domcs)\times L^2(\domcs)\times\Lspace^2(\domcs)\times L^2(\domcs))\setminus\{0\} \text{ which solves } \eqref{eq:modal-full}.
%\text{find } \alpha\in\mathbb{C} \text{ and } (\Et,\Ez,\Ht,\Hz)\neq0 \text{ which solve } \eqref{eq:modal-full}.
\end{align}
%Note that for a cross section $\domcs$ with radius $r_{\indL+1}\neq1$ a change of variable $r\to r/r_{\indL+1}$ leads to \eqref{eq:modal-full} on the unit cross section with rescaled parameters $\omega\to r_{\indL+1}\omega$ and  $\alpha\to r_{\indL+1}\alpha$.

\subsection{The tangential electric field formulation}

Our intention is to study the modal eigenvalue problem \eqref{eq:evp-full} as a perturbation of an eigenvalue problem for a compact selfadjoint operator.
To this end we first need to reformulate \eqref{eq:modal-full} into a suitable form.
The first step is to reduce the number of unknowns.
Among the many possibilities we choose to eliminate $\Ht$ and $\Hz$, see, e.g.~\cite{HallaMonk24}.
We plug \eqref{eq:modal-full-Hz}, \eqref{eq:modal-full-Ht} into \eqref{eq:modal-full-Et}; and apply $\curlg\mu^{-1}$ to \eqref{eq:modal-full-Ht} and plug in \eqref{eq:modal-full-Ez}.
Thus we obtain
\begin{subequations}
\label{eq:modal-EtEz}
\begin{align}
\Curlg\mu^{-1}\curlg\Et-\omega^2\epsilon\Et-\alpha^2\mu^{-1}\Et-\alpha\mu^{-1}\gradg\Ez &= 0 \quad\text{in }\domcs,\\
-\divg\mu^{-1}\gradg\Ez-\omega^2\epsilon\Ez-\alpha\divg(\mu^{-1}\Et) &= 0 \quad\text{in }\domcs,\\
\tv\cdot\Et &=0 \quad\text{on }\partial\domcs,\\
\Ez &=0 \quad\text{on }\partial\domcs.
\end{align}
\end{subequations}
We formulate \eqref{eq:modal-EtEz} in weak form:
\begin{align}\label{eq:modal-EtEz-weak}
\begin{aligned}
&(\Et,\Ez)\in \Hspace_0(\curlg;\domcs)\times H^1_0(\domcs) \quad\text{solves}\\
&\qquad\qquad\spl\mu^{-1}\curlg\Et,\curlg\Et\tf\spr_{\domcs}
-\omega^2\spl\epsilon\Et,\Et\tf\spr_{\domcs}
-\alpha^2\spl\mu^{-1}\Et,\Et\tf\spr_{\domcs}\\
&\qquad\qquad+\spl\mu^{-1}\gradg\Ez,\gradg\Ez\tf\spr_{\domcs}
-\omega^2\spl\epsilon\Ez,\Ez\tf\spr_{\domcs}
+\alpha\spl\mu^{-1}\Et,\gradg\Ez\tf\spr_{\domcs}
-\alpha\spl\mu^{-1}\gradg\Ez,\Et\tf\spr_{\domcs} = 0\\
&\text{for all } (\Et\tf,\Ez\tf)\in \Hspace_0(\curlg;\domcs)\times H^1_0(\domcs).
\end{aligned}
\end{align}
Since the space $\Hspace_0(\curlg;\domcs)$ does not admit a compact embedding into $\bL^2(\domcs)$ the current setting of function spaces is not suitable yet for our intended analysis.
Hence we follow \cite{HallaMonk24} and consider the topological decomposition (which is orthogonal in an equivalent weighted scalar product)
\begin{align*}
	\Hspace_0(\curlg;\domcs)&=\Vspace\oplus^\calT\gradg H^1_0(\domcs),\\
\Vspace&:=\Hcd:=\{\Et\in \Hspace_0(\curlg;\domcs)\colon\divg(\mu^{-1}\Et)=0 \text{ in }\domcs\}.
\end{align*}
With $\Et=\bv+\gradg \tilde w$, $\Et\tf=\bv\tf+\gradg \tilde w\tf$, $\bv,\bv\tf\in \Vspace, \tilde w,\tilde w\tf\in H^1_0(\domcs)$ \eqref{eq:modal-EtEz-weak} reads
\begin{align}\label{eq:modal-EtwEz-weak}
\begin{aligned}
&(\bv,\tilde w,\Ez)\in \Vspace % \Hspace_0(\curlg,(\divg\mu^{-1})^0;\domcs)
\times H^1_0(\domcs) \times H^1_0(\domcs) \quad\text{solves}\\
&\hspace{10mm}\spl\mu^{-1}\curlg\bv,\curlg\bv\tf\spr_{\domcs}
-\omega^2\spl\epsilon(\bv+\gradg \tilde w),(\bv\tf+\gradg \tilde w\tf)\spr_{\domcs}\\
&\hspace{10mm}-\alpha^2\spl\mu^{-1}\bv,\bv\tf\spr_{\domcs}
-\alpha^2\spl\mu^{-1}\gradg \tilde w,\gradg \tilde w\tf\spr_{\domcs}\\
&\hspace{10mm}+\spl\mu^{-1}\gradg\Ez,\gradg\Ez\tf\spr_{\domcs}
-\omega^2\spl\epsilon\Ez,\Ez\tf\spr_{\domcs}
+\alpha\spl\mu^{-1}\gradg \tilde w,\gradg\Ez\tf\spr_{\domcs}
-\alpha\spl\mu^{-1}\gradg\Ez,\gradg \tilde w\tf\spr_{\domcs} = 0\\
&\text{for all } (\bv\tf,\tilde w\tf,\Ez\tf)\in \Vspace
%\Hspace_0(\curlg,(\divg\mu^{-1})^0;\domcs)
\times H^1_0(\domcs) \times H^1_0(\domcs).
\end{aligned}
\end{align}
Note that we will later change to the rescaled variables $w=\omega\tilde w,w\tf=\omega\tilde w\tf$.
Eq.~\eqref{eq:modal-EtwEz-weak} is quadratic in the eigenvalue parameter $\alpha$.
Hence the next step is to regain a problem, which is affine in $\alpha^2$.
To this end let 
\begin{align*}
\D&\in\BLO(\Vspace,L^2(\domcs)),\quad \D\Et:=\curlg\Et,\\
\mathbf{M}_\sigma&\in\BLO(\bL^2(\domcs)),\quad \mathbf{M}_\sigma\Et:=\sigma\Et,\quad\text{ for }\sigma\in L^\infty(\domcs),\\
\embLVec&\in\BLO(\Vspace,\bL^2(\domcs)),\quad \embLVec\Et:=\Et,\\
\DVec&\in\BLO(H^1_0(\domcs),\bL^2(\domcs)),\quad \DVec\Ez:=\gradg\Ez,\\
\embL&\in\BLO(H^1_0(\domcs),L^2(\domcs)),\quad \embL\Ez:=\Ez,\\
M_\sigma&\in\BLO(L^2(\domcs)),\quad M_\sigma\Ez:=\sigma\Ez,\quad\text{ for }\sigma\in L^\infty(\domcs).
%\MepsVec&\in\BLO(\bL^2(\domcs)),\quad \MepsVec\Et:=\epsilon\Et,\\
%\MmuiVec&\in\BLO(\bL^2(\domcs)),\quad \MmuiVec\Et:=\mu^{-1}\Et,\\
%\Meps&\in\BLO(L^2(\domcs)),\quad \Meps\Ez:=\epsilon\Ez,\\
%\Mmui&\in\BLO(L^2(\domcs)),\quad \Mmui\Ez:=\mu^{-1}\Ez.
\end{align*}
Then \eqref{eq:modal-EtwEz-weak} reads in operator form:
\begin{align}\label{eq:modal-EtwEz-operator}
\begin{aligned}
\bpm \mathbf{0}\\0\\0 \epm &=
\left[\bpm
\DVec^* \Mmui \DVec-\omega^2 \embLVec^* \MepsVec \embLVec
%-\alpha^2 \embLVec^* \MmuiVec \embLVec
& -\omega^2 \embLVec^*\MepsVec\D
& 0\\
-\omega^2 \D^*\MepsVec\embLVec
& -\omega^2\D^* \MepsVec \D %-\alpha^2\D^* \MmuiVec \D
& 0\\%-\alpha\D^* \MmuiVec \D\\
0 & 0%\alpha\D^* \MmuiVec \D
& \D^* \MmuiVec \D -\omega^2 \embL^* \Meps \embL
\epm \right.\\
&\hspace{-5mm} \left.+\alpha
\bpm
0%\DVec^* \Mmui \DVec-\omega^2 \embLVec^* \MepsVec \embLVec
%-\alpha^2 \embLVec^* \MmuiVec \embLVec
& 0%-\omega^2 \embLVec^*\MepsVec\D
& 0\\
0%-\omega^2 \D^*\MepsVec\embLVec
& 0%-\omega^2\D^* \MepsVec \D %-\alpha^2\D^* \MmuiVec \D
& -\alpha\D^* \MmuiVec \D\\
0 & \alpha\D^* \MmuiVec \D
& 0%\D^* \MmuiVec \D -\omega^2 \embL^* \Meps \embL
\epm 
-\alpha^2
\bpm
%\DVec^* \Mmui \DVec-\omega^2 \embLVec^* \MepsVec \embLVec
\alpha^2 \embLVec^* \MmuiVec \embLVec
& 0%-\omega^2 \embLVec^*\MepsVec\D
& 0\\
%-\omega^2 \D^*\MepsVec\embLVec
& %-\omega^2\D^* \MepsVec \D %-\alpha^2
\D^* \MmuiVec \D
& 0\\%-\alpha\D^* \MmuiVec \D\\
0 & 0%\alpha\D^* \MmuiVec \D
& 0%\D^* \MmuiVec \D -\omega^2 \embL^* \Meps \embL
\epm \right]
\bpm \bv\\ \tilde w\\\Ez \epm.
\end{aligned}
\end{align}
%
%\begin{align}\label{eq:modal-EtwEz-operator}
%\bpm
%\DVec^* \Mmui \DVec-\omega^2 \embLVec^* \MepsVec \embLVec
%-\alpha^2 \embLVec^* \MmuiVec \embLVec
%& -\omega^2 \embLVec^*\MepsVec\D
%& 0\\
%-\omega^2 \D^*\MepsVec\embLVec
%& -\omega^2\D^* \MepsVec \D -\alpha^2\D^* \MmuiVec \D
%& -\alpha\D^* \MmuiVec \D\\
%0 & \alpha\D^* \MmuiVec \D
%& \D^* \MmuiVec \D -\omega^2 \embL^* \Meps \embL
%\epm
%\bpm \bv\\w\\\Ez \epm = \bpm \mathbf{0}\\0\\0 \epm.
%\end{align}
\begin{lemma}\label{lem:cutoff}
Let the assumptions formulated in \Cref{subsec:setting} be satisfied.
Then there exists $\delta_0>0$ such that $\omega$ is not a cut-off frequency (see \Cref{def:cut-off}) for each $\epsilon,\mu$ that satisfy $\deltaem<\delta_0$.
\end{lemma}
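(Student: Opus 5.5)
The plan is to reduce both equations in \Cref{def:cut-off} to generalized eigenvalue problems for selfadjoint operators with compact resolvent. I will then compare their eigenvalues with those of the homogeneous case via the min-max principle.

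\textbf{Reduction.} Consider first the Dirichlet problem. The quadratic form $a_\mu(u,u):=\spl\mu^{-1}\gradg u,\gradg u\spr_{\domcs}$ on $H^1_0(\domcs)$ is nonnegative. The form $m_\epsilon(u,u):=\spl\epsilon u,u\spr_{\domcs}$ is an equivalent scalar product on $L^2(\domcs)$. Hence the first equation has a nontrivial solution if and only if $\omega^2$ is an eigenvalue $\lambda^D_n(\epsilon,\mu)$ of the pencil $a_\mu-\lambda m_\epsilon$. These eigenvalues are real, nonnegative and accumulate only at $+\infty$, by the compact embedding $H^1_0(\domcs)\hookrightarrow L^2(\domcs)$. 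The Neumann problem is handled the same way with $a_\epsilon(u,u):=\spl\epsilon^{-1}\gradg u,\gradg u\spr_{\domcs}$ on $H^1(\domcs)$ and weight $\mu$, giving eigenvalues $\lambda^N_n(\epsilon,\mu)$. Here $\lambda^N_1=0$ is attained by the constants for every $\epsilon,\mu$. If $\omega^2\notin(0,\infty)$, then $\omega^2$ cannot equal any of these eigenvalues, since they are real and $\omega\neq0$; so in that case there is nothing to prove. It therefore suffices to consider $\omega^2>0$.

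\textbf{Relative eigenvalue perturbation.} Using $\mu_0/2\le\mu\le2\mu_0$, we get
\[
|\mu^{-1}-\mu_0^{-1}|=\frac{|\mu_0-\mu|}{\mu\mu_0}\le \frac{2\deltaem}{\mu_0^2},
\qquad
|\epsilon-\epsilon_0|\le\deltaem .
\]
Consequently, for every $u\neq0$ the Rayleigh quotient $a_\mu(u,u)/m_\epsilon(u,u)$ lies between $(1-C\deltaem)(1+C\deltaem)^{-1}$ and $(1+C\deltaem)(1-C\deltaem)^{-1}$ times its homogeneous counterpart $\mu_0^{-1}\|\gradg u\|^2_{\domcs}/(\epsilon_0\|u\|^2_{\domcs})$. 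Here $C$ depends only on $\epsilon_0,\mu_0$. The Courant--Fischer min-max characterization then yields, for all $n$ and for $\deltaem\le1/(2C)$,
\[
|\lambda^D_n(\epsilon,\mu)-\lambda^D_n(\epsilon_0,\mu_0)|\le C'\deltaem\,\lambda^D_n(\epsilon_0,\mu_0).
\]
The same relative bound holds for the Neumann eigenvalues, with the roles of $\epsilon$ and $\mu$ exchanged.

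\textbf{Conclusion.} By assumption, $\omega^2$ is not a homogeneous eigenvalue. Since the homogeneous eigenvalues $\lambda^{D/N}_n(\epsilon_0,\mu_0)$ have no finite accumulation point, the gap $d:=\operatorname{dist}(\omega^2,\{\lambda^{D}_n(\epsilon_0,\mu_0),\lambda^{N}_n(\epsilon_0,\mu_0)\}_n)$ is positive. Split the indices into two groups.
\begin{itemize}
\item If $\lambda_n(\epsilon_0,\mu_0)\ge2\omega^2$, then for $C'\deltaem<1/2$ we get $\lambda_n(\epsilon,\mu)>\omega^2$.
\item For the finitely many $n$ with $\lambda_n(\epsilon_0,\mu_0)<2\omega^2$, the shift is at most $2C'\deltaem\,\omega^2$. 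This is smaller than $d$ once $\deltaem<d/(2C'\omega^2)$.
\end{itemize}
Choosing $\delta_0:=\min\{1/(2C),1/(2C'),d/(2C'\omega^2)\}$ shows that $\omega^2$ is not an eigenvalue of either problem whenever $\deltaem<\delta_0$, which is the claim.

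The only point requiring care is that the perturbation estimate be \emph{relative} and uniform in $n$, since there are infinitely many eigenvalues. The min-max argument gives exactly this. Everything else is routine.
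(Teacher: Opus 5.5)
Your proof is correct. The paper's own proof is a single line ("Follows from basic perturbation theory"), and your argument is a legitimate, fully detailed way to fill it in. The individual steps all hold:
\begin{itemize}
\item the reduction to $\omega^2\in(0,\infty)$, using that all eigenvalues are real and nonnegative and $\omega\neq0$;
\item the pointwise comparison of Rayleigh quotients, which is valid because the form domains $H^1_0(\domcs)$ and $H^1(\domcs)$ do not depend on $\epsilon,\mu$;
\item the resulting relative bound $|\lambda_n-\lambda_n^0|\le C'\deltaem\lambda_n^0$, uniform in $n$;
\item the split into eigenvalues above $2\omega^2$ and the finitely many below.
\end{itemize}
The paper later remarks that the min-max principle gives insufficient estimates, but that concerns the $m$-uniform gap bounds needed in the Bessel analysis. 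It does not affect this lemma, where only the eigenvalues near $\omega^2$ matter.

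A somewhat shorter reading of "basic perturbation theory" avoids eigenvalues altogether. The operator $T_{\epsilon,\mu}:=-\divg(\mu^{-1}\gradg\,\cdot)-\omega^2\epsilon\colon H^1_0(\domcs)\to H^{-1}(\domcs)$ is a compact perturbation of a coercive operator, hence Fredholm of index zero. For $(\epsilon_0,\mu_0)$ it is injective by assumption, hence boundedly invertible. Since $\|T_{\epsilon,\mu}-T_{\epsilon_0,\mu_0}\|_{\mathcal{L}(H^1_0(\domcs),H^{-1}(\domcs))}\le C\deltaem$, a Neumann series gives invertibility for $\deltaem<1/(C\|T_{\epsilon_0,\mu_0}^{-1}\|)$. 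The Neumann problem on $H^1(\domcs)$ is handled in the same way.

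That version works verbatim for complex $\omega$, with no case split on $\omega^2$, and does not use selfadjointness or uniform-in-$n$ bounds. Your version instead exploits the selfadjoint structure to get more quantitative information: two-sided relative eigenvalue bounds for all $n$, and a $\delta_0$ explicit in terms of the spectral distance $d$.
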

\begin{proof}
Follows from basic perturbation theory.
\end{proof}
Justified by \Cref{lem:cutoff} we assume henceforth that $\omega$ is not a cut-off frequency.
Thus we can build in \eqref{eq:modal-EtwEz-operator} the Schur complement with respect to $\Ez$ and \eqref{eq:modal-EtwEz-operator} becomes
%\begin{align}\label{eq:modal-Etw-operator}
%\bpm
%\DVec^* \Mmui \DVec-\omega^2 \embLVec^* \MepsVec \embLVec
%-\alpha^2 \embLVec^* \MmuiVec \embLVec
%& -\omega^2 \embLVec^*\MepsVec\D\\
%-\omega^2 \D^*\MepsVec\embLVec
%& -\omega^2\D^* \MepsVec \D -\alpha^2\D^* \MmuiVec \D
%+\alpha^2 \D^* \MmuiVec \D (\D^* \MmuiVec \D -\omega^2 \embL^* \Meps \embL)^{-1}
%\D^* \MmuiVec \D
%\epm
%\bpm \bv\\w \epm = \bpm \mathbf{0}\\0 \epm.
%\end{align}
%
\begin{align}\label{eq:modal-Etw-operator}
\begin{aligned}
&\bpm
\DVec^* \Mmui \DVec-\omega^2 \embLVec^* \MepsVec \embLVec
& -\omega^2 \embLVec^*\MepsVec\D\\
-\omega^2 \D^*\MepsVec\embLVec
& -\omega^2\D^* \MepsVec \D
\epm
\bpm \bv\\ \tilde w \epm=\\
&\alpha^2 \bpm
\embLVec^* \MmuiVec \embLVec &\\
& \D^* \MmuiVec \D-\D^* \MmuiVec \D (\D^* \MmuiVec \D -\omega^2 \embL^* \Meps \embL)^{-1}
\D^* \MmuiVec \D
\epm
\bpm \bv\\ \tilde w \epm.
\end{aligned}
\end{align}
Let us set
\begin{align*}
\Xspace:=\Vspace \times H^1_0(\domcs),
\end{align*}
which will be our working Hilbert space.
The convenient form of an eigenvalue problem admits $L^2$-terms as operator coefficients of $\alpha^2$.
While the first diagonal entry in the right hand-side of \eqref{eq:modal-Etw-operator} already has a suitable form, the actual properties of the second diagonal entry are not obvious yet and shall be worked out next.
We abbreviate $\Aop_{\mu^{-1}}:=\D^* \MmuiVec \D$
and $\Kop_\epsilon=\omega^2 \embL^* \Meps \embL$.
Applying the formula $\Aop-\Aop(\Aop-\Kop)^{-1}\Aop=\Aop[I-(\Aop-\Kop)^{-1}\Aop]=-\Aop(\Aop-\Kop)^{-1}\Kop$ to $\Aop=\Aop_{\mu^{-1}}$ and $\Kop=\Kop_\epsilon$ the second diagonal entry in the right hand-side of \eqref{eq:modal-Etw-operator} simplifies to
\begin{align*}
-\Aop_{\mu^{-1}} (\Aop_{\mu^{-1}}-\Kop_\epsilon)^{-1} \Kop_\epsilon=
-\D^* \MmuiVec \D
(\D^* \MmuiVec \D-\omega^2 \embL^* \Meps \embL)^{-1}
\omega^2 \embL^* \Meps \embL.
\end{align*}
Note that also $\Aop-\Aop(\Aop-\Kop)^{-1}\Aop=-\Kop-\Kop(\Aop-\Kop)^{-1}\Kop$, which is the expression used in \cite{HallaMonk24}.
However, at this point we deviate from \cite{HallaMonk24} and multiply \eqref{eq:modal-Etw-operator} with 
\begin{align*}
\diag\Big(I,-(\Aop_{\mu^{-1}} (\Aop_{\mu^{-1}}-\Kop_\epsilon)^{-1})^{-1}\Big)
=\diag\Big(I,-(\Aop_{\mu^{-1}}-\Kop_\epsilon) \Aop_{\mu^{-1}}^{-1}\Big)
=\diag\Big(I,-I +\Kop_\epsilon \Aop_{\mu^{-1}}^{-1}\Big)
\end{align*}
to obtain
\begin{align}\label{eq:modal-Etw-operator-II}
(\tilde\Aop+\tilde\Bop) \bpm \bv\\ \tilde w \epm &= \alpha^2 \tilde\Kop \bpm \bv\\ \tilde w \epm
\end{align}
where
\begin{subequations}
\begin{align}
%&A&=\bpm
%\DVec^* \Mmui \DVec-\omega^2 \embLVec^* \MepsVec \embLVec
%& -\omega^2 \embLVec^*\MepsVec\D\\
%\omega^2 \D^*\MepsVec\embLVec
%-\omega^4 \embL^* \Meps \embL (\D^* \MmuiVec \D)^{-1} \D^*\MepsVec\embLVec
%& \omega^2\D^* \MepsVec \D
%-\omega^4 \embL^* \Meps \embL (\D^* \MmuiVec \D)^{-1} \D^* \MepsVec \D
%\epm,\\
%
\tilde\Aop&:=\bpm
\DVec^* \Mmui \DVec-\omega^2\epsilon_0\mu_0 \embLVec^* \MmuiVec \embLVec
& \\
& \omega^2\D^* \MepsVec \D
-\omega^4 \epsilon_0\mu_0 \embL^* \Meps \embL 
\epm,\\
\tilde\Bop&:=\bpm
-\omega^2 \embLVec^* (\MepsVec-\epsilon_0\mu_0\MmuiVec) \embLVec
& -\omega^2 \embLVec^*\MepsVec\D\\
\omega^2 \D^*\MepsVec\embLVec
-\omega^4 \embL^* \Meps \embL (\D^* \MmuiVec \D)^{-1} \D^*\MepsVec\embLVec
& \tilde\Bop_{22}
\epm,\\
\tilde\Bop_{22}&:=-\omega^4 \embL^* \Meps \embL (\D^* \MmuiVec \D)^{-1} \big( \D^* \MepsVec \D -\epsilon_0\mu_0 \D^* \MmuiVec \D \big),\\
\tilde\Kop&:=\bpm
\embLVec^* \MmuiVec \embLVec &\\
& \omega^2 \embL^* \Meps \embL\epm.
\end{align}
\end{subequations}
Now we switch to $w:=\omega \tilde w$, multiply \eqref{eq:modal-Etw-operator-II} with $\diag(I,\omega^{-1} I)$ and perform an eigenvalue shift $\nu^2:=\alpha^2+\omega^2\epsilon_0\mu_0$ to obtain
the eigenvalue problem to
\begin{align}\label{eq:evp}
\text{find} \quad \bigg(\nu,\bpm \bv\\w \epm\bigg)\in\setC\times \Xspace\setminus\{0\}
\quad\text{such that}\quad
(\Aop_{\epsilon,\mu}+\Bop_{\epsilon,\mu}) \bpm \bv\\w \epm &= \nu^2  \Kop_{\epsilon,\mu} \bpm \bv\\w \epm
\end{align}
with
\begin{subequations}
\begin{align}
%&A&=\bpm
%\DVec^* \Mmui \DVec-\omega^2 \embLVec^* \MepsVec \embLVec
%& -\omega^2 \embLVec^*\MepsVec\D\\
%\omega^2 \D^*\MepsVec\embLVec
%-\omega^4 \embL^* \Meps \embL (\D^* \MmuiVec \D)^{-1} \D^*\MepsVec\embLVec
%& \omega^2\D^* \MepsVec \D
%-\omega^4 \embL^* \Meps \embL (\D^* \MmuiVec \D)^{-1} \D^* \MepsVec \D
%\epm,\\
%
\Aop_{\epsilon,\mu}&:=\bpm
\DVec^* \Mmui \DVec
& \\
& \D^* \MepsVec \D
\epm,\\
\Bop_{\epsilon,\mu}&:=\bpm
-\omega^2 \embLVec^* (\MepsVec-\epsilon_0\mu_0\MmuiVec) \embLVec
& -\omega \embLVec^*\MepsVec\D\\
\omega \D^*\MepsVec\embLVec
-\omega^3 \embL^* \Meps \embL (\D^* \MmuiVec \D)^{-1} \D^*\MepsVec\embLVec
& \Bop_{22}
\epm,\\
\Bop_{\epsilon,\mu,22}&:=-\omega^2 \embL^* \Meps \embL (\D^* \MmuiVec \D)^{-1} \big( \D^* \MepsVec \D -\epsilon_0\mu_0 \D^* \MmuiVec \D \big),\\
\Kop_{\epsilon,\mu}&:=\bpm
\embLVec^* \MmuiVec \embLVec &\\
& \embL^* \Meps \embL\epm.
\end{align}
\end{subequations}
Note that in the above notation we made the dependency of the operators on the coefficients $\epsilon,\mu$ explicit by means of indices.
In cases, where there is no ambiguity we abbreviate
\begin{align*}
\Aop:=\Aop_{\epsilon,\mu},\qquad
\Kop:=\Kop_{\epsilon,\mu},\qquad
\Bop:=\Bop_{\epsilon,\mu},\qquad
\Bop_{22}:=\Bop_{\epsilon,\mu,22}.
\end{align*}
We collect our transformations in the following lemma.
\begin{lemma}\label{lem:evp-equi}
If $(\alpha,(\Et,\Ez,\Ht,\Hz))\in \setC\times(\Lspace^2(\domcs)\times L^2(\domcs)\times \Lspace^2(\domcs)\times L^2(\domcs))\setminus\{0\}$ is a solution to \eqref{eq:evp-full}, then $(\nu:=\pm\sqrt{\alpha^2+\omega^2\epsilon_0\mu_0},(\bv,w)^\top)\in \setC\times \Xspace\setminus\{0\}$ with $\bv+\nabla w=\Et$ is a solution to \eqref{eq:evp}.
Vice-versa, if $(\nu,(\bv,w)^\top)\in \setC\times \Xspace\setminus\{0\}$ is a solution to \eqref{eq:evp}, then there exist two linear independent solutions $(\alpha:=\pm\sqrt{\nu^2-\omega^2\epsilon_0\mu_0},(\Et,\Ez^{(\pm)},\Ht^{(\pm)},\Hz^{(\pm)}))\in \setC\times(\Lspace^2(\domcs)\times L^2(\domcs)\times \Lspace^2(\domcs)\times L^2(\domcs))\setminus\{0\}$ to \eqref{eq:evp-full} with $\Et=\bv+\nabla w$ and $(\Ez^{(\pm)},\Ht^{(\pm)},\Hz^{(\pm)})$ being uniquely determined by $\alpha$ and $\Et$.
\end{lemma}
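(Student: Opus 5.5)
The proof retraces the chain of reformulations \eqref{eq:modal-full} $\to$ \eqref{eq:modal-EtEz} $\to$ \eqref{eq:modal-EtEz-weak} $\to$ \eqref{eq:modal-EtwEz-weak} $\to$ \eqref{eq:modal-Etw-operator} $\to$ \eqref{eq:evp}, and checks that each step is reversible, or reversible up to the sign of $\alpha$.

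\textbf{Forward direction.} Take a solution of \eqref{eq:evp-full}.

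First I show $\Et\neq0$. If $\Et=0$, then \eqref{eq:modal-full-Hz} gives $\Hz=0$, and \eqref{eq:modal-full-Et} gives $\alpha\rotm\Ht=0$. If $\alpha\neq0$, this forces $\Ht=0$, then $\Ez=0$ by \eqref{eq:modal-full-Ez}, so the whole field vanishes. If $\alpha=0$, inserting \eqref{eq:modal-full-Ht} into \eqref{eq:modal-full-Ez} shows that $\Ez$ solves the Dirichlet problem of \Cref{def:cut-off}. This contradicts the standing assumption that $\omega$ is not a cut-off frequency (\Cref{lem:cutoff}).

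Next I eliminate $\Ht,\Hz$ exactly as done before \eqref{eq:modal-EtEz}. A distributional argument with the boundary conditions gives $\Et\in\Hspace_0(\curlg;\domcs)$ and $\Ez\in H^1_0(\domcs)$. Regularity follows from the equations themselves: $\curlg\Et\in L^2$ by \eqref{eq:modal-full-Hz}, and $\gradg\Ez\in\Lspace^2$ by \eqref{eq:modal-full-Ht}.

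I then decompose $\Et=\bv+\gradg\tilde w$ using the topological decomposition $\Hspace_0(\curlg;\domcs)=\Vspace\oplus^\calT\gradg H^1_0(\domcs)$. Here $\tilde w$ is the $H^1_0$-solution of $\divg(\mu^{-1}\gradg\tilde w)=\divg(\mu^{-1}\Et)$. This yields \eqref{eq:modal-EtwEz-weak} and hence \eqref{eq:modal-EtwEz-operator}.

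The third row can be solved for $\Ez$, since $\D^*\MmuiVec\D-\omega^2\embL^*\Meps\embL$ is invertible because $\omega$ is not a cut-off frequency. It gives
\[
\Ez=-\alpha(\Aop_{\mu^{-1}}-\Kop_\epsilon)^{-1}\Aop_{\mu^{-1}}\tilde w,
\]
and substituting produces \eqref{eq:modal-Etw-operator}.

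Multiplying by the invertible block operator $\diag(I,-I+\Kop_\epsilon\Aop_{\mu^{-1}}^{-1})$ gives \eqref{eq:modal-Etw-operator-II}. This operator is invertible because $\Aop_{\mu^{-1}}$ is an isomorphism on $H^1_0$, as is $\Aop_{\mu^{-1}}-\Kop_\epsilon$. The algebraic identities for $\tilde\Aop+\tilde\Bop$ should be checked entrywise by adding and subtracting $\omega^2\epsilon_0\mu_0$ terms. Finally, rescaling $w=\omega\tilde w$ and shifting to $\nu^2$ gives \eqref{eq:evp}, with $(\bv,w)\neq0$ since $\Et\neq0$.

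\textbf{Converse direction.} Start from $(\nu,(\bv,w))$ solving \eqref{eq:evp}. Every operation above was multiplication by an invertible operator or a bijective change of variables, so $(\bv,\tilde w)$ with $\tilde w=w/\omega$ satisfies \eqref{eq:modal-Etw-operator}, with $\alpha^2=\nu^2-\omega^2\epsilon_0\mu_0$.

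For each choice of sign $\alpha=\pm\sqrt{\cdot}$, define
\[
\Ez^{(\pm)}:=-\alpha(\Aop_{\mu^{-1}}-\Kop_\epsilon)^{-1}\Aop_{\mu^{-1}}\tilde w .
\]
Then the full system \eqref{eq:modal-EtwEz-operator}, equivalently \eqref{eq:modal-EtEz-weak}, holds with $\Et=\bv+\gradg\tilde w\neq0$; note $\Et\neq0$ because the decomposition is direct. Define $\Hz^{(\pm)}$ by \eqref{eq:modal-full-Hz} and $\Ht^{(\pm)}$ by \eqref{eq:modal-full-Ht}. Testing the weak equations with smooth compactly supported functions recovers \eqref{eq:modal-full-Et} and \eqref{eq:modal-full-Ez} in the distributional sense. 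Testing with general test functions recovers the natural boundary conditions $\partial_{\nvt}\Hz=0$ and $\curlg\Ht=0$.

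Uniqueness holds because, once $\alpha$ and $\Et$ are given, \eqref{eq:modal-full-Hz}, \eqref{eq:modal-full-Ht} and the invertible Schur step determine the remaining components. The two solutions are linearly independent, which is automatic when $\alpha\neq0$ since they correspond to distinct eigenvalues $\pm\alpha$.

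\textbf{Main obstacle.} The hard part is the degenerate case $\alpha=0$, i.e.\ $\nu^2=\omega^2\epsilon_0\mu_0$, where the two signs coincide. I would treat it by reading the statement as counting the pair $\pm\alpha$ with multiplicity, or by excluding this case via a separate argument. The second, more routine obstacle is carefully recovering the strong boundary conditions from the weak formulation.
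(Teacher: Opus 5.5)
Your route is the one the paper intends (its proof is only ``basic algebraic manipulations''). The reversible chain of reformulations, the Schur-complement formula for $\Ez$, and the invertibility of $\diag(I,-I+\Kop_\epsilon\Aop_{\mu^{-1}}^{-1})$ are all fine.

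\textbf{The gap.} You leave the case $\alpha=0$, i.e.\ $\nu^2=\omega^2\epsilon_0\mu_0$, open, and it cannot be handled by ``counting with multiplicity''. If it occurred, your two constructed solutions would be literally identical, since $\Ez^{(\pm)}=0$ and $\Ht^{(\pm)}=0$. The lemma's claim of two linearly independent solutions would then simply be false.

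The missing idea is that this case is excluded by the standing non-cut-off assumption. This is the same tool you used in the forward direction, but now you need its Neumann half. For $\alpha=0$ your construction gives $\Ez=0$, $\Ht=0$, and a pair $(\Et,\Hz)$ with $\Et=\bv+\omega^{-1}\gradg w\neq0$. This pair satisfies $-i\omega\epsilon\Et=\Curlg\Hz$, $-i\omega\mu\Hz=-\curlg\Et$ and $\tv\cdot\Et=0$. Eliminating $\Et=\frac{i}{\omega}\epsilon^{-1}\Curlg\Hz$ yields $-\divg(\epsilon^{-1}\gradg\Hz)-\omega^2\mu\Hz=0$ in $\domcs$. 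The boundary condition is $\partial_{\nvt}\Hz=\tv\cdot\Curlg\Hz=-i\omega\epsilon\,\tv\cdot\Et=0$. This is exactly the second problem of \Cref{def:cut-off}. Hence $\Hz=0$, then $\Et=0$, a contradiction. Equivalently, \eqref{eq:evp} with $\nu^2=\omega^2\epsilon_0\mu_0$ says that $\Et\in\Hspace_0(\curlg;\domcs)\setminus\{0\}$ weakly solves $\Curlg\mu^{-1}\curlg\Et=\omega^2\epsilon\Et$, which is the TE cut-off problem. The paper uses exactly this in \Cref{thm:main} (``Thus $\alpha\neq0$'').

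Once $\alpha\neq0$ is known, linear independence is immediate. A dependence between the two solutions must have coefficient one, because they share $\Et\neq0$. Subtracting the $+\alpha$ and $-\alpha$ versions of \eqref{eq:modal-full-Ht} then gives $2\alpha\rotm\Et=0$, hence $\Et=0$.

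\textbf{A smaller correction.} The conditions $\partial_{\nvt}\Hz=0$ and $\curlg\Ht=0$ are not natural boundary conditions of \eqref{eq:modal-EtEz-weak}. The test functions in $\Hspace_0(\curlg;\domcs)\times H^1_0(\domcs)$ carry homogeneous traces, so ``testing with general test functions'' does not produce them. They follow directly from the equations instead.
\begin{itemize}
\item $\curlg\Ht=-i\omega\epsilon\Ez$ vanishes on $\partial\domcs$ because $\Ez\in H^1_0(\domcs)$.
\item By \eqref{eq:modal-full-Et} and $\tv\cdot\Et=0$, $\partial_{\nvt}\Hz=\tv\cdot\Curlg\Hz=-\alpha\,\nvt\cdot\Ht$.
\item $\nvt\cdot\Ht=0$ follows from \eqref{eq:modal-full-Ht}: $\nvt\cdot\Curlg\Ez$ is (up to sign) a tangential derivative of $\Ez$, and $\nvt\cdot\rotm\Et=\tv\cdot\Et=0$.
\end{itemize}
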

\begin{proof}
Follows with basic algebraic manipulations.
%Let $(\alpha,(\Et,\Ez,\Ht,\Hz))\in \setC\times(\Lspace^2(\domcs)\times L^2(\domcs)\times \Lspace^2(\domcs)\times L^2(\domcs))\setminus\{0\}$ be a solution to \eqref{eq:evp-full}.
%Then it follows from \eqref{eq:modal-full} that indeed
%$(\Et,\Ez,\Ht,\Hz))\in \Hspace_0(\curlg;\domcs)\times H^1_0(\domcs)\times \Hspace(\curlg;\domcs)\times H_{\mu*}^1(\domcs)$.
%Since $\Et=0$ and \eqref{eq:modal-full} would imply $\Ez=0,\Ht=0,\Hz=0$, it also follows that $\Et\neq0$.
%The previous analysis in this section then shows that $(\lambda^2:=\alpha^2+\omega^2\epsilon_0\mu_0,\Et)$ solves \eqref{eq:evp}.
%Vice-versa, let $(\lambda^2,\Et)\in \setC\times \Hspace_0(\curlg;\domcs)\setminus\{0\}$ be a solution to \eqref{eq:evp}.
\end{proof}
The structure of \eqref{eq:evp} now invites us to analyze \eqref{eq:evp} as a perturbation of the selfadjoint eigenvalue problem
\begin{align}\label{eq:evp-sa}
\text{find} \quad \bigg(\nu,\bpm \bv\\w \epm\bigg)\in\setC\times \Xspace\setminus\{0\}
\quad\text{such that}\quad
\Aop_{\epsilon,\mu} \bpm \bv\\w \epm &= \nu^2 \Kop_{\epsilon,\mu} \bpm \bv\\w \epm.
\end{align}
We are going to perform such an analysis in \Cref{sec:perturbation-nsa} and as a preparation we state the upcoming \Cref{lem:Bbound}.
However, this analysis requires certain strong properties of the eigenfunctions and eigenvalue gaps of the unperturbed problem \eqref{eq:evp-sa}.
To this end we consider \eqref{eq:evp-sa} as a perturbation of the homogeneous problem [\eqref{eq:evp-sa} with $\epsilon=\epsilon_0$, $\mu=\mu_0$] and perform this analysis in \Cref{sec:perturbation-sa}.
Since this perturbation is not small in any subordinate sense we will exploit certain properties of Bessel functions, which we are going to study in \Cref{sec:bessel}.\\

In preparation of \Cref{lem:Bbound} we introduce certain constants.
Let $\Cdir^2>0$ be the coercivity constant of the Dirichlet Laplacian:
\begin{align*}
\|\gradg w\|_{\bL^2(\domcs)} \geq \Cdir \|w\|_{H^1(\domcs)}
\quad\text{for all }w\in H^1_0(\domcs).
\end{align*}
Let $\Ctrhi>0$ be a trace continuity constant in $H^1(\domcs)$:
\begin{align*}
\|w\|_{L^2(\intf)} \leq \Ctrhi \|w\|_{H^1(\domcs)}
\quad\text{for all } w\in H^1_0(\domcs);
\end{align*}
and $\Ctrhii>0$ be a trace continuity constant in a broken space:
\begin{align*}
\|\gradg w\|_{L^2(\intf)} \leq \Ctrhii (\|w\|_{H^1(\domcs)}+(\sum_{l=1}^{\indL} |v|_{H^2(\domcs_l)})^{1/2})
\end{align*}
for all $w\in \{w \in H^1(\domcs)\colon w|_{\domcs_l}\in H^2(\domcs_l)\,\, \forall l=1,\dots,\indL\}$.
\begin{lemma}\label{lem:Bbound}
It holds that
\begin{align*}
\Big|\Big\spl \Bop_{\epsilon,\mu} \bpm\bv\\w\epm, \bpm\bv\tf\\w\tf\epm \Big\spr_{\Xspace}
\Big|&\leq
\CBop\deltaemt \Big(
\|\bv\|_{\domcs} \|\bv\tf\|_{\domcs}
+\|w\|_{\intf} \|\nvt\cdot\mu^{-1}\bv\tf\|_{\intf}\\
&
+\|\nvt\cdot\mu^{-1}\bv\|_{\intf} \|w\tf\|_{\intf}
+\|\nvt\cdot\mu^{-1}\bv\|_{\intf} \|w\tf\|_{\domcs}
+\|w\|_{\domcs} \|w\tf\|_{\domcs}
+\|w\|_{\intf} \|w\tf\|_{\domcs} \Big).
\end{align*}
for all $\bv,\bv\tf\in\Vspace$, $w,w\tf\in H^1_0(\domcs)$
with
\[
\CBop:=\max\left\{
2|\omega|^2\mu_0^{-1},
2|\omega|,
4|\omega|^3 \epsilon_0\mu_0 \Cdir^{-1} \Ctrhi,
2|\omega|^2 \epsilon_0,
2|\omega|^2 \Ctrhii (2\epsilon_0\mu_0\Cdir^{-1}+4\epsilon_0\mu_0) \right\}.
\]
\end{lemma}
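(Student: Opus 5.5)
The plan is to bound the sesquilinear form $\spl \Bop(\bv,w)^\top,(\bv\tf,w\tf)^\top\spr_{\Xspace}$ block by block: the two terms coming from $\Bop_{11}$, $\Bop_{12}$, the two terms coming from $\Bop_{21}$, and the one coming from $\Bop_{22}$. Each block will be rewritten so that the coefficient $\epsilon\mu-\epsilon_0\mu_0$ (or its jumps) appears. Two facts will be used throughout. First, $\bv\tf\in\Vspace$ means $\divg(\mu^{-1}\bv\tf)=0$; this gives $\spl\gradg\psi,\mu^{-1}\bv\tf\spr_{\domcs}=0$ for all $\psi\in H^1_0(\domcs)$, and also continuity of $\nvt\cdot\mu^{-1}\bv\tf$ across $\intf$. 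Second, $\epsilon\mu-\epsilon_0\mu_0$ is constant on each annulus $\domcs_l$, with $|\epsilon\mu-\epsilon_0\mu_0|\le\deltaemt$ and jumps $|\ljump\epsilon\mu\rjump|\le 2\deltaemt$. I will also use $\mu\ge\mu_0/2$ and $\epsilon\le2\epsilon_0$.

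For the $(1,1)$ block I write $\epsilon-\epsilon_0\mu_0\mu^{-1}=\mu^{-1}(\epsilon\mu-\epsilon_0\mu_0)$. This directly gives the bound $2|\omega|^2\mu_0^{-1}\deltaemt\|\bv\|_{\domcs}\|\bv\tf\|_{\domcs}$.

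For the $(1,2)$ block I write
\[
\spl\epsilon\gradg w,\bv\tf\spr_{\domcs}=\spl(\epsilon\mu-\epsilon_0\mu_0)\gradg w,\mu^{-1}\bv\tf\spr_{\domcs},
\]
where the $\epsilon_0\mu_0$ part vanishes by the divergence constraint. Integrating by parts on each annulus, the volume terms vanish because $\divg(\mu^{-1}\bv\tf)=0$ and the coefficient is constant there. What remains is $\sum_l\ljump\epsilon\mu\rjump\spl w,\nvt\cdot\mu^{-1}\bv\tf\spr_{\intf_l}$, which yields the term $2|\omega|\deltaemt\|w\|_{\intf}\|\nvt\cdot\mu^{-1}\bv\tf\|_{\intf}$.

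The first part of the $(2,1)$ block is handled the same way with the roles of $\bv$ and $w\tf$ exchanged. For the second part I set $z:=(\D^*\MmuiVec\D)^{-1}\D^*\MepsVec\bv\in H^1_0(\domcs)$. Its right-hand side $\psi\mapsto\spl\epsilon\bv,\gradg\psi\spr_{\domcs}$ is again an interface functional, bounded by $2\deltaemt\|\nvt\cdot\mu^{-1}\bv\|_{\intf}\Ctrhi\|\psi\|_{H^1}$. Coercivity, $\mu^{-1}\ge(2\mu_0)^{-1}$ together with $\Cdir$, then gives $\|z\|_{H^1}\lesssim\mu_0\Cdir^{-1}\Ctrhi\deltaemt\|\nvt\cdot\mu^{-1}\bv\|_{\intf}$. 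Consequently $|\omega|^3|\spl\epsilon z,w\tf\spr|$ produces the $\|\nvt\cdot\mu^{-1}\bv\|_{\intf}\|w\tf\|_{\domcs}$ term with the constant $4|\omega|^3\epsilon_0\mu_0\Cdir^{-1}\Ctrhi$.

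The $(2,2)$ block is the main obstacle, since $w$ has no extra regularity. I will transfer the inverse to the test side. Let $q\in H^1_0(\domcs)$ solve $\D^*\MmuiVec\D\,q=\embL^*\Meps\embL w\tf$; this is possible since the operator is selfadjoint with real coefficients. The term then becomes
\[
-\omega^2\spl\mu^{-1}(\epsilon\mu-\epsilon_0\mu_0)\gradg w,\gradg q\spr_{\domcs}.
\]
Here I need piecewise $H^2$ regularity for this transmission problem. The interfaces are concentric circles, so I will argue via separation of variables or standard local regularity on the annuli to get $\|q\|_{H^1}+(\sum_l|q|^2_{H^2(\domcs_l)})^{1/2}\lesssim\epsilon_0\mu_0\Cdir^{-1}\|w\tf\|_{\domcs}$. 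Then I integrate by parts on each annulus to move the derivative off $w$:
\begin{itemize}
\item The volume part involves $\Delta q=-\mu\epsilon w\tf$ on each $\domcs_l$. With the constant coefficient this gives $\lesssim\deltaemt\|w\|_{\domcs}\|w\tf\|_{\domcs}$.
\item The interface part is $\spl w,\ljump(\epsilon-\epsilon_0\mu_0\mu^{-1})\partial_{\nvt}q\rjump\spr_{\intf}$, where $\mu^{-1}\partial_{\nvt}q$ is continuous. By $\Ctrhii$ and the regularity bound this is $\lesssim\deltaemt\|w\|_{\intf}\|w\tf\|_{\domcs}$.
\end{itemize}
Collecting the constants from all blocks and taking the maximum gives $\CBop$.
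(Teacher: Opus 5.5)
Your proposal is correct and follows the paper's proof essentially step by step. It uses the same block-by-block splitting, and the same use of $\divg(\mu^{-1}\bv\tf)=0$ and piecewise constancy to turn the $(1,2)$ and $(2,1)$ couplings into interface terms weighted by jumps of $\epsilon\mu$. For $\Bop_{22}$ it uses the same transfer of $(\D^*\MmuiVec\D)^{-1}$ onto $w\tf$, piecewise $H^2$ regularity of the auxiliary solution, and integration by parts on each annulus.

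The only difference is cosmetic, in the second part of the $(2,1)$ block. You solve with the interface functional on the trial side and use $H^1$-coercivity, while the paper moves the inverse onto $w\tf$ and bounds the trace of the auxiliary solution by $\Ctrhi$; this is a dual form of the same estimate. In either version the explicit constant, e.g.\ the power of $\Cdir$, needs slightly more careful bookkeeping than stated.
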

\begin{proof}
We apply the triangle inequality and estimate step by step each summand in the expansion of
$\Big\spl \Bop_{\epsilon,\mu} \bpm\bv\\w\epm, \bpm\bv\tf\\w\tf\epm \Big\spr_{\Xspace}$.
To start with
\begin{align*}
|\omega|^2 |\spl \embLVec^* (\MepsVec-\epsilon_0\mu_0\MmuiVec) \embLVec \bv,\bv\tf\spr_{\Vspace}|
&=|\omega|^2 |\spl \mu^{-1}(\epsilon\mu-\epsilon_0\mu_0) \bv,\bv\tf\spr_{\bL^2(\domcs)}|\\
&\leq 2|\omega|^2\mu_0^{-1} \deltaemt \|\bv\|_{\domcs} \|\bv\tf\|_{\domcs}.
\end{align*}
For the next term we exploit that $\divg(\mu^{-1}\bv)=0$ for $\bv\in\Vspace$ and that $\epsilon,\mu$ are piece-wise constant:
\begin{align*}
|\omega \spl \embLVec^*\MepsVec\D w,\bv\tf \spr_{\Vspace}|
&=|\omega \spl \epsilon \gradg w,\bv\tf \spr_{\bL^2(\domcs)}|\\
&=|\omega \spl \epsilon\mu \gradg w,\mu^{-1}\bv\tf \spr_{\domcs}|\\
&=|\omega \spl (\epsilon\mu-\epsilon_0\mu_0) \gradg w,\mu^{-1}\bv\tf \spr_{\domcs}|\\
&=|\omega \spl \ljump\epsilon\mu-\epsilon_0\mu_0\rjump w,\nvt\cdot\mu^{-1}\bv\tf \spr_{\intf}|\\
&=2|\omega| \deltaemt \|w\|_{\intf} \|\nvt\cdot\mu^{-1}\bv\tf\|_{\intf}.
\end{align*}
Like-wise $|\omega \spl \D^*\MepsVec\embLVec\bv, w\tf \spr_{\Vspace}|\leq 2|\omega| \deltaemt \|\nvt\cdot\mu^{-1}\bv\|_{\intf} \|w\tf\|_{\intf}$.
To treat the remaining terms we first analyze $(\D^* \MmuiVec \D)^{-1} \embL^* \Meps \embL$.
Since
\[
H^1_0(\domcs)\ni u:=(\D^* \MmuiVec \D)^{-1} \embL^* \Meps \embL w
\]
solves $-\divg(\mu^{-1}\gradg u)=\epsilon w$ in $\domcs$, it follows that $\|u\|_{H^1(\domcs)}\leq 2\epsilon_0\mu_0\Cdir^{-1} \|w\|_{L^2(\domcs)}$.
Further, we have that
\begin{align*}
\spl \mu \divg(\mu^{-1}\gradg u),\divg(\mu^{-1}\gradg u) \spr_{\domcs}
&=\spl \mu \partial_r (r^{-1}\mu^{-1} \partial_r u), \partial_r (r^{-1}\mu^{-1} \partial_r u) \spr_{\domcs}
+\spl \mu^{-1} r^{-1} \partial_{\theta}^2 u, r^{-1} \partial_{\theta}^2 u \spr_{\domcs}\\
&+\spl \partial_r (r^{-1}\mu^{-1} \partial_r u), r^{-1} \partial_{\theta}^2 u \spr_{\domcs}
+\spl r^{-1} \partial_{\theta}^2 u, \partial_r (r^{-1}\mu^{-1} \partial_r u) \spr_{\domcs}
\end{align*}
and
$\spl \partial_r (r^{-1}\mu^{-1} \partial_r u), r^{-1} \partial_{\theta}^2 u \spr_{\domcs}
+\spl r^{-1} \partial_{\theta}^2 u, \partial_r (r^{-1}\mu^{-1} \partial_r u) \spr_{\domcs}
=2\spl \mu^{-1}r^{-1} \partial_r \partial_{\theta} u, r^{-1} \partial_r \partial_{\theta} u \spr_{\domcs}$.
Thus $\sum_{l=1}^{\indL} |u|_{H^2(\domcs_l)}^2 \leq 16\epsilon_0^2\mu_0^2 \|w\|_{L(^2(\domcs)}^2$.
Now we are read to estimate
\begin{align*}
|\omega^3 \embL^* \Meps \embL (\D^* \MmuiVec \D)^{-1} & \D^*\MepsVec\embLVec\bv,w\tf\spr_{H^1_0(\domcs)}|\\
&=|\omega^3\MepsVec\embLVec\bv, \D (\D^* \MmuiVec \D)^{-1} \embL^* \Meps \embL w\tf\spr_{\bL^2(\domcs)}|\\
&=|\omega^3 \epsilon\bv, \gradg (\D^* \MmuiVec \D)^{-1} \embL^* \Meps \embL w\tf\spr_{\domcs}|\\
&=|\omega^3 (\epsilon\mu-\epsilon_0\mu_0) \mu^{-1}\bv, \gradg (\D^* \MmuiVec \D)^{-1} \embL^* \Meps \embL w\tf\spr_{\domcs}|\\
&\leq 2|\omega|^3 \deltaemt \|\nvt\cdot\mu^{-1}\bv\|_{\intf} \|(\D^* \MmuiVec \D)^{-1} \embL^* \Meps \embL w\tf\|_{\intf}\\
&\leq 4|\omega|^3 \epsilon_0\mu_0 \Cdir^{-1} \Ctrhi \deltaemt \|\nvt\cdot\mu^{-1}\bv\|_{\intf} \|w\tf\|_{\domcs},
\end{align*}
where again we have used that $\divg(\mu^{-1}\bv)=0$ and that $\epsilon,\mu$ are piece-wise constant.
It remains to estimate the term stemming from $\Bop_{\epsilon,\mu,22}$.
Again, it follows with integration by parts that
\begin{align*}
|\omega^2 &\embL^* \Meps \embL (\D^* \MmuiVec \D)^{-1}
(\D^* \MepsVec \D -\epsilon_0\mu_0 \D^* \MmuiVec \D )w, w\tf \spr_{H^1_0(\domcs)}|\\
&= |\omega|^2 |\spl ( \MepsVec \D -\epsilon_0\mu_0 \MmuiVec \D) w, \D (\D^* \MmuiVec \D)^{-1}\embL^* \Meps \embL w\tf \spr_{\bL^2(\domcs)}|\\
&= |\omega|^2 |\spl (\epsilon\mu-\epsilon_0\mu_0) \gradg w, \mu^{-1}\gradg (\D^* \MmuiVec \D)^{-1}\embL^* \Meps \embL w\tf \spr_{\domcs}|\\
&\leq |\omega|^2 |\spl (\epsilon\mu-\epsilon_0\mu_0) w, \epsilon w\tf \spr_{\domcs}|
+|\omega|^2 |\spl \ljump \epsilon\mu-\epsilon_0\mu_0 \rjump w,\nvt\cdot \mu^{-1}\gradg (\D^* \MmuiVec \D)^{-1}\embL^* \Meps \embL w\tf \spr_{\intf}\\
&\leq 2|\omega|^2 \epsilon_0 \deltaemt \|w\|_{\domcs} \|w\tf\|_{\domcs}
+2|\omega|^2 \Ctrhii (2\epsilon_0\mu_0\Cdir^{-1}+4\epsilon_0\mu_0) \deltaemt  \|w\|_{\intf} \|w\tf\|_{\domcs}.
\end{align*}
\end{proof}

\subsection{Coaxial separation of modes}\label{subsec:separation}

%For $\mu\Curlg v\in\Vspace$, $w\in H^1_0(\domcs)$, $\tilde v\in H^1_{\mu*}(\domcs)$ let us introduce the Fourier coefficients
%\begin{align*}
%u(re^{i\theta m})=\sum_{m\in\setZ} u_m(r) (2\pi)^{-1/2} e^{im\theta},\quad
%u_m(r):=(2\pi)^{-1/2}\int_0^{2\pi} u(re^{i\theta m}) e^{-im\theta}\,d\theta,\quad
%u \in\{v,w,\tilde v\}.
%\end{align*}
%%\begin{align*}
%%v(re^{i\theta m})=\sum_{m\in\setZ} v_m(r) (2\pi)^{-1/2} e^{im\theta},\quad
%%v_m(r):=(2\pi)^{-1/2}\int_0^{2\pi} v(re^{i\theta m}) e^{-im\theta}\,d\theta,\\
%%w(re^{i\theta m})=\sum_{m\in\setZ}w_m(r) (2\pi)^{-1/2} e^{im\theta},\quad
%%w_m(r):=(2\pi)^{-1/2}\int_0^{2\pi} w(re^{i\theta m}) e^{-im\theta}\,d\theta,\\
%%\tilde v(re^{i\theta m})=\sum_{m\in\setZ} \tilde v_m(r) (2\pi)^{-1/2} e^{im\theta},\quad
%%\tilde v_m(r):=(2\pi)^{-1/2}\int_0^{2\pi} \tilde v(re^{i\theta m}) e^{-im\theta}\,d\theta.
%%\end{align*}
For any scalar function $u\in L^2(\domcs)$ we introduce the Fourier coefficients 
\begin{align*}
u(re^{i\theta m})=\sum_{m\in\setZ} u_m(r) (2\pi)^{-1/2} e^{im\theta},\quad
u_m(r):=(2\pi)^{-1/2}\int_0^{2\pi} u(re^{i\theta m}) e^{-im\theta}\,d\theta.
\end{align*}
For any scalar function space $Y\subset L^2(\domcs)$ let
\begin{align*}
	Y_m:=\{u\in Y\colon u_{m'}=0 \,\,\forall m'\in\setZ\setminus\{m\}\}.
\end{align*}
For any vectorial function space $\boldY$ characterized by a potential space $\boldY=\{\sigma\Curlg u\colon u\in Y\}$, $\sigma=\mu,\epsilon$ let
\begin{align*}
	\boldY_m:=\{\sigma\Curlg u\colon u\in Y_m\}.
\end{align*}
Note that
\begin{align*}
	\Vspace=\{\mu\Curlg u\colon u\in \Vpotspace\},\qquad
	\Vpotspace:=\{u\in H^1(\domcs)\colon \mu\Curlg u\in\Hspace_0(\curlg;\domcs)\}.
\end{align*}
Further set
\begin{align*}
	V:=H^1_{\mu^{-1}*}(\domcs) \quad\text{and}\quad W:=H^1_0(\domcs),
\end{align*}
and in particular
\begin{align*}
	\Xspace_m:=\Vspace_m\times\Wm.
\end{align*}
%Since $\domcs$ is simply connected it holds that
%\begin{align*}
%	\Vspace&=\{\mu\Curlg v\colon v\in\Vpotspace\},\\
%	\Vpotspace&:=\{v\in H^1_{\mu*}(\domcs),\,\,
%	\curlg(\mu\Curlg v)\in L^2(\domcs),\,\,
%	\mu\partial_{\nvt}v=0 \text{ on }\partial\domcs\},\\
%	\|v\|_{\Vpotspace}^2&:=\|v\|^2_{H^1(\domcs)}+\|\curlg(\mu\Curlg v)\|^2_{\domcs}.
%\end{align*}
%Thence let
%\begin{align*}
%%	\Vspace_m&:=\{\mu\Curlg v\colon v\in H^1_{\mu*}(\domcs),\,
%%	\curlg(\mu\Curlg v)\in L^2(\domcs),\,
%%	\mu\partial_{\nvt}v=0 \text{ on }\partial\domcs,\,
%%	v_{m'}=0 \,\,\forall m'\in\setZ\setminus\{m\}\},\\
%%\Vspace_m&:=\{\mu\Curlg v\colon v\in\Vpotspace,\,\,
%%	v_{m'}=0 \,\,\forall m'\in\setZ\setminus\{m\}\},\\
%\Vspace_m&:=\{\mu\Curlg v_m\colon v_m\in\Vpotspace_m\},\\
%\Vpotspace_m&:=\{v\in\Vpotspace\colon v_{m'}=0 \,\,\forall m'\in\setZ\setminus\{m\}\},\qquad\\
%\Wm&:=\{w\in H^1_0(\domcs)\colon w_{m'}=0 \,\,\forall m'\in\setZ\setminus\{m\}\},\qquad\\
%\Vm&:=\{\tilde v\in H^1_{\mu*}(\domcs)\colon \tilde v_{m'}=0 \,\,\forall m'\in\setZ\setminus\{m\}\},\qquad\\
%\Xspace_m&:=\Vspace_m\times\Wm.
%\end{align*}
Since $\epsilon$ and $\mu$ depend only on the radial variable it follows that the spaces  $\Xspace_m, m\in\setZ$ are invariant with respect to $\Aop,\Bop,\Kop$ and the eigenvalue problems \eqref{eq:evp} and \eqref{eq:evp-sa} decouple into the following families ($m\in\setZ$) of problems
\begin{align}\label{eq:evp-m}
\text{find} \quad \bigg(\nu,\bpm \bv_m\\w_m \epm\bigg)\in\setC\times \Xspace_m\setminus\{0\}
\quad\text{such that}\quad
(\Aop_m+\Bop_m) \bpm \bv_m\\w_m \epm &= \nu^2  \Kop_m \bpm \bv_m\\w_m \epm
\end{align}
and
\begin{align}\label{eq:evp-sa-m}
\text{find} \quad \bigg(\nu,\bpm \bv_m\\w_m \epm\bigg)\in\setC\times \Xspace_m\setminus\{0\}
\quad\text{such that}\quad
\Aop_m \bpm \bv_m\\w_m \epm &= \nu^2  \Kop_m \bpm \bv_m\\w_m \epm,
\end{align}
where $\Aop_m:=\Aop|_{X_m}, \Bop_m:=\Bop|_{X_m}, \Kop_m:=\Kop|_{X_m} \in \BLO(\Xspace_m)$.
Hence it suffices to conduct the perturbation analysis for the problems \eqref{eq:evp-m} and \eqref{eq:evp-sa-m} separately for each $m\in\setZ$.
However, the respective perturbation estimates have to be uniform in $m\in\setZ$ in order to yield a meaningful result for the full space $X$.
To be more precise, we introduce the following statements.
\begin{definition}
A family of vectors $(\phi_n)_{n\in\mathbb{N}}$ in a Hilbert space $\Phi\ni \phi_n$ is called a Riesz basis, if each $\phi\in \Phi$ admits a unique representation $\phi=\sum_{n\in\setN} c_n \phi_n$, $c_n\in\setC$ and there exist constants $c,C>0$ such that
\begin{align*}
	c \sum_{n\in\setN} |c_n|^2 \leq \|\phi\|_\Phi^2 \leq C \sum_{n\in\setN} |c_n|^2
	\qquad\text{for all } \phi\in \Phi.
\end{align*}
We call $c,C$ the Gram constants of the Riesz basis $(\phi_n)_{n\in\mathbb{N}}$.
\end{definition}
\begin{lemma}\label{lem:compoundbasis}
Let the Hilbert space $\Phi$ be separated into a discrete family of orthogonal subspaces, i.e., $\calI$ is a discrete index set and $\Phi=\bigoplus_{m\in\calI}^\bot \Phi_m$.
Let $(\phi_{n,m})_{n\in\mathbb{N}}$ be a Riesz basis of $\Phi_m$ with Gram constants $c_m,C_m>0$ for each $m\in\calI$.
If there exist constants $c,C>0$ such that $c<c_m, C_m<C$ for all $m\in\calI$, then $(\phi_{n,m})_{n\in\mathbb{N},m\in\calI}$ forms a Riesz basis of $\Phi$ with Gram constants $c,C$.
\end{lemma}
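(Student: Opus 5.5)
The plan is to use orthogonality of the subspaces to reduce everything to a computation of norms of coefficient sequences, and then handle uniqueness and existence of the expansion separately.

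\emph{Step 1 (expansion).} Fix $\phi\in\Phi$ and let $P_m$ be the orthogonal projection onto $\Phi_m$, so that
\[
\phi=\sum_{m\in\calI}P_m\phi,\qquad \|\phi\|_\Phi^2=\sum_{m\in\calI}\|P_m\phi\|_\Phi^2 .
\]
Each component $P_m\phi\in\Phi_m$ has a unique expansion $P_m\phi=\sum_{n\in\setN}c_{n,m}\phi_{n,m}$, and by the Gram bounds for $\Phi_m$,
\[
c_m\sum_n|c_{n,m}|^2\leq \|P_m\phi\|_\Phi^2\leq C_m\sum_n|c_{n,m}|^2 .
\]
Summing over $m$ and using the uniform bounds $c<c_m$ and $C_m<C$ gives
\[
c\sum_{n,m}|c_{n,m}|^2\leq\|\phi\|_\Phi^2\leq C\sum_{n,m}|c_{n,m}|^2 .
\]
In particular the coefficient family $(c_{n,m})$ is square summable.

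\emph{Step 2 (convergence of the double series).} Next I show that $\sum_{n,m}c_{n,m}\phi_{n,m}$ converges to $\phi$, and unconditionally, so that enumerating $\setN\times\calI$ as a single sequence is legitimate. Take a finite index set $F$. By orthogonality of the $\Phi_m$ and the upper Gram bound applied in each $\Phi_m$ (to finitely supported coefficients),
\[
\Big\|\sum_{(n,m)\in F}c_{n,m}\phi_{n,m}\Big\|_\Phi^2\leq C\sum_{(n,m)\in F}|c_{n,m}|^2 .
\]
Hence the partial sums form a Cauchy net whenever $(c_{n,m})\in\ell^2$. The limit equals $\phi$, because its projection onto each $\Phi_m$ is $P_m\phi$.

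\emph{Step 3 (uniqueness).} Suppose $\sum d_{n,m}\phi_{n,m}=0$ with $(d_{n,m})$ the coefficients of a convergent expansion. Applying the continuous projection $P_m$ gives $\sum_n d_{n,m}\phi_{n,m}=0$ in $\Phi_m$, so $d_{n,m}=0$ by uniqueness of the expansion in the Riesz basis of $\Phi_m$.

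The main subtle point is that uniqueness and convergence of the double series hinge on the meaning of ``representation'' for a doubly indexed family. Applying the continuous $P_m$ termwise requires the series to converge, which I take as part of the definition; Step 2 then makes the convergence order-independent. Everything else is a direct summation of the componentwise Gram inequalities.
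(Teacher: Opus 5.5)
Your argument is correct and is the standard argument the paper alludes to when it says the lemma ``follows with basic considerations'' (the paper gives no details). You decompose via the orthogonal projections $P_m$, sum the componentwise Gram inequalities using $c<c_m$ and $C_m<C$, and obtain unconditional convergence of the double series from the finite-sum estimate.

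The only point worth tightening is Step 3. There, an arbitrary enumeration of $\setN\times\calI$ may induce a rearranged series in $\Phi_m$. Uniqueness still holds, because the coefficient functionals $x\mapsto c_{n,m}(x)$ are linear (by uniqueness), continuous (by the lower Gram bound $c_m|c_{n,m}(x)|^2\leq\|x\|_\Phi^2$), and biorthogonal to $(\phi_{n,m})_{n}$. Applying them to any convergent representation therefore recovers the coefficients.
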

\begin{proof}
Follows with basic considerations.
\end{proof}

\section{Properties of Bessel functions}\label{sec:bessel}

% other usefull results on Bessel functions not applied in this paper:
% Dunster24,FilonovLevitinPolterovich24,Nemes21
% Landau99
% Reif24
% Paris84
% Kowalenko02
% Nemes15
% Nemes17
% ChungLeePark24b 
% LangowskiNowak25
% Paris04
% ElbertLaforgia85
% Olenko06

Let $\besselj_m$, $\bessely_m$ be the Bessel functions of the first and second kind, $(\besseljz_{n,m})_{n\in\setN}$ be the non-decreasing sequence of positive roots of $\besselj_m$, $m\in\Nz$ and $(\besseljz'_{n,m})_{n\in\setN}$ be the non-decreasing sequence of positive roots of the derived Bessel function $\besselj'_m$, $m\in\Nz$.
It is known that $(\besseljz_{n,m})_{n\in\setN}$ and $(\besseljz'_{n,m})_{n\in\setN}$ are interlacing \cite[Thm.~1]{BariczKokologiannakiPogany18} for each $m\in\Nz$, i.e.,
\begin{align}\label{eq:interlacing}
\besseljz_{1,0}<\besseljz'_{1,0}<\besseljz_{2,0}<\besseljz'_{2,0}<\dots \quad\text{and}\quad
\besseljz'_{1,m}<\besseljz_{1,m}<\besseljz'_{2,m}<\besseljz_{2,m}<\dots,\, m\in\setN
\end{align}
We introduce the compact notation
\begin{align*}
(\besseljzc_{n,0})_{n\in\setN}:=\besseljz_{1,0},\besseljz'_{1,0},\besseljz_{2,0},\besseljz'_{2,0},\dots \quad\text{and}\quad
(\besseljzc_{n,m})_{n\in\setN}:=
\besseljz'_{1,m},\besseljz_{1,m},\besseljz'_{2,m},\besseljz_{2,m},\dots,\,  m\in\setN.
\end{align*}
Throughout the manuscript we will repeatedly apply the Wronskian identity \cite[Eq.~10.5.2]{NISTDLMF}
\begin{align}\label{eq:Wronskian}
\besselj_m'(r)\bessely_m(r)-\besselj_m(r)\bessely'_m(r)=\frac{2}{\pi r},
\quad r>0, m\in\Nz.
\end{align}
Let $\Ai$ and $\Bi$ be the Airy functions of the first and second kind and
\begin{align*}
	(\Aiz_n)_{n\in\setN}, (\Aipz_n)_{n\in\setN}, \quad \Aiz_n>0,
\end{align*}
be the ascendant sequence of the positive roots of $\Ai(-\cdot)$ and $\Ai'(-\cdot)$ respectively.
Note that $(\Aiz_n)_{n\in\setN}$ and $(\Aipz_n)_{n\in\setN}$ are interlacing, i.e.,
\begin{align*}
	0<\Aipz_1<\Aiz_1<\Aipz_2<\Aiz_2<\dots,
\end{align*}
see, e.g., \cite[(2.2), Thm.~(T3.1)]{Baldwin85}.
We introduce the compact notation
\begin{align*}
(\Aizc_n)_{n\in\setN}:=\Aipz_1,\Aiz_1,\Aipz_2,\Aiz_2,\dots.
\end{align*}

\subsection{Asymptotic approximations}

To establish \Cref{lem:Crjy,lem:Crjj,lem:Crrjjyy} (which will be used for \Cref{lem:det} and \Cref{prop:evgap}) we will repeatedly use (incremental adaptations/generalizations) of the Bessel function asymptotics provided by \cite{Sher23,Krasikov14a}.
To make the manuscript more self-contained we formulate those in the following two \Cref{lem:besselTransZone,lem:besselOscZoneI}.
We start with the behavior in the transition zone.
\begin{lemma}\label{lem:besselTransZone}
For each compact interval $[t_1,t_2]\subset\setR$ there exists a constant $C_{[t_1,t_2]}>0$ such that
\begin{subequations}
\begin{align}
\label{eq:SherJ}
\sup_{t\in [t_1,t_2]} |\besselj_m(m+m^{1/3}t)-\frac{2^{1/3}}{m^{1/3}}\Ai(-2^{1/3}t)|&\leq \frac{C_{[t_1,t_2]}}{m},\\
\label{eq:SherY}
\sup_{t\in [t_1,t_2]} |\bessely_m(m+m^{1/3}t)+\frac{2^{1/3}}{m^{1/3}}\Bi(-2^{1/3}t)|&\leq \frac{C_{[t_1,t_2]}}{m},\\
\label{eq:SherJp}
\sup_{t\in [t_1,t_2]} |\besselj'_m(m+m^{1/3}t)+\frac{2^{2/3}}{m^{2/3}}\Ai'(-2^{1/3}t)|&\leq \frac{C_{[t_1,t_2]}}{m^{4/3}},\\
\label{eq:SherYp}
\sup_{t\in [t_1,t_2]} |\bessely'_m(m+m^{1/3}t)-\frac{2^{2/3}}{m^{2/3}}\Bi'(-2^{1/3}t)|&\leq \frac{C_{[t_1,t_2]}}{m^{4/3}},\\
\label{eq:SherJpp}
\sup_{t\in [t_1,t_2]} |\besselj''_m(m+m^{1/3}t)-\frac{2}{m}\Ai(-2^{1/3}t)|&\leq \frac{C_{[t_1,t_2]}}{m^{5/3}},\\
\label{eq:SherYpp}
\sup_{t\in [t_1,t_2]} |\bessely''_m(m+m^{1/3}t)+\frac{2}{m}\Bi(-2^{1/3}t)|&\leq \frac{C_{[t_1,t_2]}}{m^{5/3}},
\end{align}
\end{subequations}
for all $m\in\Nz$.
\end{lemma}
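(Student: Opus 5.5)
The plan is to derive all six estimates from the uniform Airy-type (Olver) expansion of Bessel functions of large order, in the form quoted in \cite{Sher23}. Two preliminary remarks fix the scope. The finitely many small orders $m\le m_0$ are trivial: for fixed $m$ both sides are continuous and bounded on the compact set of arguments, so the difference is bounded by a constant $C$ and $C\le C m_0/m$. The value $m=0$ should be read with the understanding that the statement only concerns $m\ge1$ there, or that the Airy term is dropped (the expression $m+m^{1/3}t$ degenerates). Hence it suffices to treat $m\ge m_0$ large, where $x:=m+m^{1/3}t$ satisfies $x/m=1+m^{-2/3}t\in[1/2,2]$.

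\emph{Step 1: $J_m$ and $Y_m$.} I would use Olver's expansion
\[
\besselj_m(mz)=\Big(\tfrac{4\zeta}{1-z^2}\Big)^{1/4}\Big(\tfrac{\Ai(m^{2/3}\zeta)}{m^{1/3}}(1+O(m^{-2}))+\tfrac{\Ai'(m^{2/3}\zeta)}{m^{5/3}}O(1)\Big),
\]
uniformly for $z$ in compact subsets of $(0,\infty)$, together with the analogous expansion for $\bessely_m$ with $-\Bi$. I set $z=1+m^{-2/3}t$. Since $\zeta(z)$ is analytic near $z=1$ with $\zeta=-2^{1/3}(z-1)+O((z-1)^2)$, we get
\[
m^{2/3}\zeta=-2^{1/3}t+O(m^{-2/3}t^2),
\]
and the prefactor equals $2^{1/3}+O(m^{-2/3})$. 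A Taylor expansion of $\Ai$ on a compact set, using that $\Ai,\Ai'$ are bounded there, gives a total error of order $m^{-1/3}\cdot m^{-2/3}=m^{-1}$. This proves \eqref{eq:SherJ} and \eqref{eq:SherY}.

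\emph{Step 2: the first derivatives.} I would use the companion expansion
\[
\besselj_m'(mz)=-\tfrac{2}{z}\Big(\tfrac{1-z^2}{4\zeta}\Big)^{1/4}\Big(\tfrac{\Ai'(m^{2/3}\zeta)}{m^{2/3}}(1+O(m^{-2}))+\tfrac{\Ai(m^{2/3}\zeta)}{m^{4/3}}O(1)\Big),
\]
and likewise for $\bessely_m'$. Here the prefactor is $2^{2/3}+O(m^{-2/3})$. The same Taylor argument, now with $\Ai''(s)=s\Ai(s)$ bounded on compacts, yields an error of order $m^{-2/3}\cdot m^{-2/3}=m^{-4/3}$.

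\emph{Step 3: the second derivatives.} These need no new asymptotics; I would use Bessel's equation
\[
\besselj_m''(x)=-x^{-1}\besselj_m'(x)-(1-m^2/x^2)\besselj_m(x).
\]
With $x=m+m^{1/3}t$ the two pieces behave as follows:
\begin{itemize}
\item $1-m^2/x^2=2m^{-2/3}t+O(m^{-4/3})$;
\item $x^{-1}\besselj_m'=O(m^{-5/3})$.
\end{itemize}
Inserting Step 1 gives
\[
\besselj_m''=-2t\,m^{-2/3}\,\frac{2^{1/3}}{m^{1/3}}\Ai(-2^{1/3}t)+O(m^{-5/3})=-\frac{2^{4/3}t}{m}\Ai(-2^{1/3}t)+O(m^{-5/3}).
\]
The product of the $O(m^{-1})$ error from Step 1 with $m^{-2/3}$ is $O(m^{-5/3})$, and $t$ is bounded, so it is absorbed. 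The leading term obtained this way is $-2^{4/3}t\,m^{-1}\Ai(-2^{1/3}t)$, which is $2m^{-1}\Ai''(-2^{1/3}t)$ since $\Ai''(s)=s\Ai(s)$. So I expect the intended statement \eqref{eq:SherJpp} to have $\Ai''$ rather than $\Ai$ (possibly a typo in the formula), and I would verify and record this consistently; \eqref{eq:SherYpp} follows in the same way with $\Bi''$.

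The main obstacle is Step 1/2: justifying the uniformity of Olver's remainder bounds, i.e. that the $O(\cdot)$ terms are uniform for $z$ in a neighbourhood of the turning point $z=1$. I would cite \cite{Sher23} (or Olver's book) for these explicit uniform error bounds rather than reprove them. The constant $C_{[t_1,t_2]}$ then comes from $\sup_{[t_1,t_2]}$ of $|\Ai|,|\Ai'|,|\Bi|,|\Bi'|$ and their derivatives.
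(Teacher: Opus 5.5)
Your argument is correct, and it takes a somewhat different route from the paper. The paper gives no derivation: it refers to \cite[Sect.~10.19]{NISTDLMF} for the transition-region asymptotics and to \cite[Prop.~2.9]{Sher23} for the actual bounds, covering all six estimates at once. You instead localize Olver's uniform expansion at the turning point $z=1$, using $\zeta=-2^{1/3}(z-1)+O((z-1)^2)$ and the boundedness of $\Ai,\Ai',\Bi,\Bi'$ and their derivatives on compacts, to get \eqref{eq:SherJ}--\eqref{eq:SherYp}. You then obtain the second-derivative bounds from Bessel's equation, i.e.\ from \eqref{eq:JYppp}. The citation route is shorter. Yours is more self-contained, and it actually checks the last two formulas.

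Your check is right: \eqref{eq:SherJpp} and \eqref{eq:SherYpp} are false as printed. Already at $t=0$, Bessel's equation gives $J_m''(m)=-J_m'(m)/m=O(m^{-5/3})$, whereas $\frac{2}{m}\Ai(0)\neq0$. So the left-hand side of \eqref{eq:SherJpp} is of exact order $m^{-1}$, not $O(m^{-5/3})$. The correct leading terms are $\frac{2}{m}\Ai''(-2^{1/3}t)=-\frac{2^{4/3}t}{m}\Ai(-2^{1/3}t)$ and $-\frac{2}{m}\Bi''(-2^{1/3}t)$, with error $O(m^{-5/3})$ exactly as you show. This is also what formally differentiating \eqref{eq:SherJp} and \eqref{eq:SherYp} with $\frac{d}{dx}=m^{-1/3}\frac{d}{dt}$ predicts, so the printed versions have simply lost the double primes. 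Neither of the two bounds is referred to explicitly later in the paper.

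One small correction to your reduction to large $m$. Continuity on a compact set of arguments is fine for the entire function $J_m$, but for $Y_m$ it needs $m+m^{1/3}t_1>0$. If $t_1\le-1$, the $Y$-estimates are meaningless at $m=1$. So, like $m=0$ and like the statement itself, they must be read for $m>(-t_1)^{3/2}$. This does no harm in the paper, where the lemma is only used with $t\ge0$.
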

\begin{proof}
The asymptotics are in principal well-known \cite[Sect.~10.19]{NISTDLMF}.
The actual bounds are due to \cite[Prop.~2.9]{Sher23}.
\end{proof}
In order to stick to the convenient notation \cite{Krasikov14a}, but to avoid a double use of symbols we use a blackboard font style for the following constants:
\begin{align*}
\mushift:=|m^2-1/4|, \qquad \omshift:=(2m+1)\pi/4.
\end{align*}
Further, let
\begin{align*}
\phaseB(r):=\sqrt{r^2-\mushift}+\sqrt{\mushift}\arcsin\frac{\sqrt{\mushift}}{r},\qquad
\phaseB'(r)=\frac{\sqrt{r^2-\mushift}}{r},\qquad
r\geq\sqrt{\mushift}.
\end{align*}
\begin{lemma}\label{lem:besselOscZoneI}
For each $m\in\setN$ and $r>\sqrt{\mushift}$ it holds that
\begin{subequations}
\begin{align}
\label{eq:KrasikovJ}
|\besselj_m(r)-\sqrt{\frac{2}{\pi}} (r^2-\mushift)^{-1/4} \cos(\phaseB(r)-\omshift)| &\leq \frac{13\mushift}{12\sqrt{2\pi}(r^2-\mushift)^{7/4}},\\
\label{eq:KrasikovY}
|\bessely_m(r)-\sqrt{\frac{2}{\pi}} (r^2-\mushift)^{-1/4} \sin(\phaseB(r)-\omshift)| &\leq \frac{13\mushift}{12\sqrt{2\pi}(r^2-\mushift)^{7/4}},\\
\label{eq:KrasikovJp}
|\besselj'_m(r)+\sqrt{\frac{2}{\pi}} \frac{(r^2-\mushift)^{1/4}}{r} \sin(\phaseB(r)-\omshift)| &\leq
%\frac{3r}{\sqrt{8\pi}(r^2-\mushift)^{5/4}}\\
%\frac{6\mushift}{\sqrt{8\pi}(r^2-\mushift)^{1/4}}
%\int_r^{+\infty} (z^2-\mushift)^{-2} \,\dd z\\
\frac{5}{\sqrt{8\pi}} \frac{r}{(r^2-\mushift)^{5/4}}
+\frac{13\mushift r}{24\sqrt{2\pi}(r^2-\mushift)^{11/4}},\\
\label{eq:KrasikovYp}
|\bessely'_m(r)-\sqrt{\frac{2}{\pi}} \frac{(r^2-\mushift)^{1/4}}{r} \cos(\phaseB(r)-\omshift)| &\leq
\frac{5}{\sqrt{8\pi}} \frac{r}{(r^2-\mushift)^{5/4}}
+\frac{13\mushift r}{24\sqrt{2\pi}(r^2-\mushift)^{11/4}}.
%%
%\label{eq:KrasikovJpp}
%|\besselj''_m(r)-\sqrt{\frac{2}{\pi}} (r^2-\mushift)^{-1/4} \cos(\phaseB(r)-\omshift)| &\leq \frac{13\mushift}{12\sqrt{2\pi}(r^2-\mushift)^{7/4}},\\
%%
\end{align}
\end{subequations}
\end{lemma}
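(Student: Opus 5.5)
The bounds \eqref{eq:KrasikovJ} and \eqref{eq:KrasikovY} are taken directly from \cite{Krasikov14a}. For the derivative bounds \eqref{eq:KrasikovJp} and \eqref{eq:KrasikovYp} I would go back into Krasikov's argument and differentiate his Liouville--Green (WKB) representation, rather than differentiate the final estimates, which is not possible.

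\textbf{Setup.} For $r>\sqrt{\mushift}$ put $u(r):=\sqrt{r}\,\besselj_m(r)$ (analogously for $\bessely_m$). Then
\[
u''+q(r)\,u=0,\qquad q(r):=1-\frac{\mushift}{r^2}\quad(\text{for } m\geq1),
\]
and $\sqrt{q(r)}=\phaseB'(r)$. Consider the WKB pair $q^{-1/4}e^{\pm i\phaseB(r)}$. The large-$r$ asymptotics of $\besselj_m$ and $\bessely_m$ fix the phase shift $\omshift$ and the normalisation $\sqrt{2/\pi}$. Variation of constants then gives the Volterra equation
\[
u(r)=\sqrt{\tfrac{2}{\pi}}\,q(r)^{-1/4}\cos(\phaseB(r)-\omshift)
-\int_r^\infty \frac{\sin(\phaseB(r)-\phaseB(z))}{q(r)^{1/4}q(z)^{1/4}}\,\rho(z)\,u(z)\,\dd z .
\]
Here $\rho$ is the Schwarzian-type remainder of the Liouville transform. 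It satisfies $|\rho(z)|\lesssim \mushift\, z^{2}(z^2-\mushift)^{-3}$, which is exactly the quantity that produces $13\mushift/12$ in \cite{Krasikov14a}. Using the already known bound $|u(z)|\leq C z^{1/2}(z^2-\mushift)^{-1/4}$, Krasikov's integral estimate is recovered.

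\textbf{Differentiation.} Differentiate the Volterra equation in $r$. The kernel vanishes at $z=r$, so there is no boundary term, and the derivative falls on three places.

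1. On $q(r)^{-1/4}$ and on the prefactor $r^{-1/2}$ coming from $\besselj_m=r^{-1/2}u$. These are non-oscillatory corrections of size
\[
\frac{r}{(r^2-\mushift)^{5/4}} .
\]
Collecting their constants ($\tfrac12$ from $q^{-1/4}$, $\tfrac12$ from $r^{-1/2}$, plus the contribution of the $\cos$-term of the integral) should give the coefficient $5/\sqrt{8\pi}$.

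2. On $\cos(\phaseB-\omshift)$. This produces the main term
\[
-\sqrt{\tfrac{2}{\pi}}\,\frac{(r^2-\mushift)^{1/4}}{r}\,\sin(\phaseB(r)-\omshift),
\]
since $\phaseB'=\sqrt{r^2-\mushift}/r$ and $r^{-1/2}q^{-1/4}\phaseB'=(r^2-\mushift)^{1/4}/r$.

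3. On the kernel $\sin(\phaseB(r)-\phaseB(z))$. This gives a factor $\phaseB'(r)$ times an integral of the same type as before. It is bounded by
\[
\frac{6\mushift}{\sqrt{8\pi}(r^2-\mushift)^{1/4}}\int_r^\infty (z^2-\mushift)^{-2}\,\dd z .
\]
Then I would use $\int_r^\infty (z^2-\mushift)^{-2}\,\dd z\leq$ an explicit bound; a crude version is $\int_r^\infty z\,(z^2-\mushift)^{-2}\,\dd z\cdot r^{-1}$-type comparisons. After combining with the previous factors this yields the term $\frac{13\mushift r}{24\sqrt{2\pi}(r^2-\mushift)^{11/4}}$.

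\textbf{Case of $\bessely_m$.} The computation is identical with $\cos$ replaced by $\sin$ in the representation. Differentiation then turns $\sin(\phaseB-\omshift)$ into $+\cos(\phaseB-\omshift)$, which explains the sign difference between \eqref{eq:KrasikovJp} and \eqref{eq:KrasikovYp}.

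\textbf{Main obstacle.} The delicate part is the exact constant bookkeeping in the last two steps. The non-oscillatory terms from differentiating the amplitude have to be bounded uniformly in $m$ together with the integral remainder. The remainder integrand needs a uniform bound of the form $\mushift(z^2-\mushift)^{-2}$ times the envelope of $u$, and this bound must be valid up to $r\downarrow\sqrt{\mushift}$. I would first try to reproduce Krasikov's remainder bound verbatim and simply carry the extra factor $\phaseB'(r)\leq 1$ from the kernel derivative. If the constants do not close, I would settle for the stated form with a possibly larger numerical constant, since only the $m$-uniform structure is used later.
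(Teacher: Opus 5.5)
Your route is the paper's. Your Volterra equation for $u=\sqrt{r}\,\besselj_m$ is Krasikov's identity for $(r^2-\mushift)^{1/4}\besselj_m$ multiplied by $q(r)^{-1/4}$ (note $r^{-1/2}q(r)^{-1/4}=(r^2-\mushift)^{-1/4}$), and the paper differentiates it in the same way. Your bound $\frac{6\mushift}{\sqrt{8\pi}(r^2-\mushift)^{1/4}}\int_r^\infty(z^2-\mushift)^{-2}\,\dd z$ for the kernel-derivative piece is also the paper's. The gap is in which error term comes from which piece, and step 3 as written is false. The kernel-derivative piece cannot be bounded by $\frac{13\mushift r}{24\sqrt{2\pi}(r^2-\mushift)^{11/4}}=O(\mushift r^{-9/2})$. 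For fixed $m$ and $r\to\infty$ it is genuinely of size $\mushift r^{-7/2}|\cos(\phaseB(r)-\omshift)|$: after inserting $(z^2-\mushift)^{1/4}\besselj_m(z)\approx\sqrt{2/\pi}\cos(\phaseB(z)-\omshift)$, the non-oscillating half $\frac12\cos(\phaseB(r)-\omshift)$ of the product with $\cos(\phaseB(r)-\phaseB(z))$ does not cancel. What your own comparison $\int_r^\infty(z^2-\mushift)^{-2}\,\dd z\le\frac{1}{2r(r^2-\mushift)}$ gives, together with $\mushift<r^2$, is $\frac{3r}{\sqrt{8\pi}(r^2-\mushift)^{5/4}}$. (The paper evaluates the integral and drops the negative logarithm, with the same result.) This is exactly the ``cos-term of the integral'' that you already, correctly, counted in step 1. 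To get the $(z^2-\mushift)^{-2}$ integrand you need $\phaseB'(r)\le\phaseB'(z)$ for $z\ge r$, or you must keep $\phaseB'(r)=\sqrt{r^2-\mushift}/r$ exactly. Replacing $\phaseB'(r)$ by $1$, as your last paragraph proposes, leaves a term $\frac{13\mushift}{12\sqrt{2\pi}(r^2-\mushift)^{7/4}}$ that does not fit into $\frac{3}{\sqrt{8\pi}}\frac{r}{(r^2-\mushift)^{5/4}}$ as $r\downarrow\sqrt{\mushift}$.

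Conversely, step 1 misses a piece. The amplitude derivative is $\frac{\dd}{\dd r}(r^2-\mushift)^{-1/4}=-\frac{r}{2}(r^2-\mushift)^{-5/4}$. The contributions $-\frac{1}{2r}(r^2-\mushift)^{-1/4}$ from $r^{-1/2}$ and $-\frac{\mushift}{2r}(r^2-\mushift)^{-5/4}$ from $q^{-1/4}$ add up to exactly this, so do not bound them separately; ``$\frac12$ plus $\frac12$'' doubles the constant. On the main term this derivative gives $\frac{1}{\sqrt{2\pi}}\frac{r}{(r^2-\mushift)^{5/4}}=\frac{2}{\sqrt{8\pi}}\frac{r}{(r^2-\mushift)^{5/4}}$. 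It also hits the Volterra remainder, which still carries the kernel $\sin(\phaseB(r)-\phaseB(z))$. Bounding that remainder as in the proof of \eqref{eq:KrasikovJ}, by $\frac{13\mushift}{12\sqrt{2\pi}(r^2-\mushift)^{3/2}}$, produces precisely $\frac{13\mushift r}{24\sqrt{2\pi}(r^2-\mushift)^{11/4}}$. With this assignment the coefficients give $2+3=5$ and the stated constants close exactly. Your fallback to ``a possibly larger numerical constant'' is therefore unnecessary, and it would not prove the lemma as stated anyway. For $\bessely_m$, ``$\cos$ replaced by $\sin$'' is right, but you must also supply two things. First, the envelope bound $(r^2-\mushift)^{1/4}|\bessely_m(r)|\le\sqrt{2/\pi}$ used inside the remainder integral; it follows as in the first case of Krasikov's Theorem~3 with $\besselj$ replaced by $\bessely$. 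Second, the homogeneous-solution constants, fixed from the large-$r$ asymptotics of $\bessely_m$.
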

\begin{proof}
The first equation \eqref{eq:KrasikovJ} is merely \cite[Thm.~5]{Krasikov14a}.
To obtain \eqref{eq:KrasikovY} we repeat the proof of \cite[Thm.~5]{Krasikov14a} with minor adaptations:
The proof applies the estimate $(r^2-\mushift)^{1/4}|\bessely_m(r)|\leq \sqrt{2/\pi}$ for $r>\mushift$, which follows as in the first case in the proof of \cite[Thm.~3]{Krasikov14a} by replacing $\besselj_m$ with $\bessely_m$.
Furthermore, the constants $c_1,c_2$ of the homogeneous solution in the proof \cite[Thm.~5]{Krasikov14a} change and are deduced from the asymptotic of $\bessely_m(r)$ for $r\to+\infty$.
To obtain \eqref{eq:KrasikovJp} we note that \cite{Krasikov14a} actually yields
\begin{align*}
(r^2-\mushift)^{1/4} &\besselj_m(r)-\sqrt{\frac{2}{\pi}} \cos(\phaseB(r)-\omshift) \\
&= \int_r^{+\infty} \frac{3(\phaseB''(t))^2-2\phaseB'(t)\phaseB'''(t)}{4(\phaseB'(t))^4} \phaseB'(t) (t^2-\mushift)^{1/4}\besselj_m(t) \sin(\phaseB(r)-\phaseB(t))\,\dd t
\end{align*}
and
\begin{align*}
\int_r^{+\infty} \left|\frac{3(\phaseB''(t))^2-2\phaseB'(t)\phaseB'''(t)}{4(\phaseB'(t))^4} \phaseB'(t) (t^2-\mushift)^{1/4} \besselj_m(t)\right| \,\dd t
\leq \frac{13\mushift}{12\sqrt{2\pi}(r^2-\mushift)^{7/4}} \quad\text{for } r>\sqrt{\mushift}.
\end{align*}
Multiplying the former identity by $(r^2-\mushift)^{-1/4}$ and deriving it we obtain that the argument inside the absolute value bars in the left hand-side of \eqref{eq:KrasikovJp} equals
\begin{align*}
&-\frac{1}{\sqrt{2\pi}} \frac{r}{(r^2-\mushift)^{5/4}} \cos(\phaseB(r)-\omshift)\\
&-\frac{1}{2}\frac{r}{(r^2-\mushift)^{5/4}}
\int_r^{+\infty} \frac{3(\phaseB''(t))^2-2\phaseB'(t)\phaseB'''(t)}{4(\phaseB'(t))^4} \phaseB'(t) (t^2-\mushift)^{1/4}\besselj_m(t) \sin(\phaseB(r)-\phaseB(t))\,\dd t\\
&+\frac{1}{(r^2-\mushift)^{1/4}}
\int_r^{+\infty} \frac{3(\phaseB''(t))^2-2\phaseB'(t)\phaseB'''(t)}{4(\phaseB'(t))^4} (\phaseB'(t))^2 (t^2-\mushift)^{1/4}\besselj_m(t) \cos(\phaseB(r)-\phaseB(t))\,\dd t.
\end{align*}
We estimate the absolute value of the first two terms by
\begin{align*}
\frac{1}{\sqrt{2\pi}} \frac{r}{(r^2-\mushift)^{5/4}}
+\frac{13\mushift r}{24\sqrt{2\pi}(r^2-\mushift)^{11/4}}.
\end{align*}
Exploiting computations from \cite[p.~222]{Krasikov14a} the third term can be estimated as
% log((x+c)/(x-c))/(4c^3)+x/(2c^2(c^2-x^2))
% https://www.wolframalpha.com/input?i=log%28%28x%2Bc%29%2F%28x-c%29%29%2F%284c%5E3%29%2Bx%2F%282c%5E2%28c%5E2-x%5E2%29%29
\begin{align*}
\frac{6\mushift}{\sqrt{8\pi}(r^2-\mushift)^{1/4}}
\int_r^{+\infty} (z^2-\mushift)^{-2} \,\dd z
&=\frac{6\mushift}{\sqrt{8\pi}(r^2-\mushift)^{1/4}}
\Big( \frac{r}{2\mushift(r^2-\mushift)}-\frac{\log\frac{r+\sqrt{\mushift}}{r-\sqrt{\mushift}}}{4\mushift^{3/2}} \Big)\\
&\leq \frac{3r}{\sqrt{8\pi}(r^2-\mushift)^{5/4}}.
\end{align*}
Hence \eqref{eq:KrasikovJp} follows.
Finally, \eqref{eq:KrasikovYp} follows by combining the above adaptations.
\end{proof}
\begin{lemma}\label{lem:besselOscZoneII}
For each $m\in\setN$ and $r>m+m^{1/3}\tstar$, $\tstar>1$ it holds that
\begin{subequations}
\begin{align}
\label{eq:KrasikovJbound}
|(r^2-\mushift)^{1/4} \besselj_m(r)-\sqrt{\frac{2}{\pi}} \cos(\phaseB(r)-\omshift)|
&\leq %\frac{13\mushift}{12\sqrt{2\pi}(r^2-\mushift)^{3/2}} \leq
\frac{13}{12\sqrt{2\pi}\tstar^{3/2}},\\
\label{eq:KrasikovYbound}
|(r^2-\mushift)^{1/4} \bessely_m(r)-\frac{\sqrt{2}}{\sqrt{\pi}} \sin(\phaseB(r)-\omshift)|
&\leq
\frac{13}{12\sqrt{2\pi}\tstar^{3/2}},\\
\label{eq:KrasikovJpbound}
|\frac{r}{(r^2-\mushift)^{1/4}} \besselj'_m(r)+\sqrt{\frac{2}{\pi}} \sin(\phaseB(r)-\omshift)| &\leq
%\frac{5}{\sqrt{8\pi}} \frac{r^2}{(r^2-\mushift)^{3/2}}
%+\frac{13\mushift r^2}{24\sqrt{2\pi}(r^2-\mushift)^{3}}\\
%&\leq
%\frac{5}{\sqrt{8\pi}} \frac{1}{(r^2-\mushift)^{1/2}}
%+\frac{5}{\sqrt{8\pi}} \frac{\mushift}{(r^2-\mushift)^{3/2}}\\
%&+\frac{13\mushift}{24\sqrt{2\pi}(r^2-\mushift)^{2}}
%+\frac{13\mushift^2}{24\sqrt{2\pi}(r^2-\mushift)^{3}}\\
%&\leq
%\frac{5}{\sqrt{8\pi}} \frac{1}{m^{2/3}r_*^{1/2}}
%+\frac{5}{\sqrt{8\pi}} \frac{1}{r_*^{3/2}}\\
%&+\frac{13}{24\sqrt{2\pi}m^{2/3}r_*^2}
%+\frac{13}{24\sqrt{2\pi}r_*^3}\\
%&\leq
%\frac{5}{\sqrt{2\pi}} \frac{1}{r_*^{1/2}}
%+\frac{13}{12\sqrt{2\pi}r_*^2}
%\leq
\frac{7}{\sqrt{2\pi}} \frac{1}{\tstar^{1/2}}\\
\label{eq:KrasikovYpbound}
|\frac{r}{(r^2-\mushift)^{1/4}} \bessely'_m(r)-\sqrt{\frac{2}{\pi}} \cos(\phaseB(r)-\omshift)| &\leq
\frac{7}{\sqrt{2\pi}} \frac{1}{\tstar^{1/2}}.
\end{align}
\end{subequations}
\end{lemma}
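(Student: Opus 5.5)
The plan is to derive all four bounds directly from \Cref{lem:besselOscZoneI}, multiplying each inequality there by the appropriate power of $(r^2-\mushift)$ or by $r$, and then controlling the resulting right-hand sides using the lower bound on $r^2-\mushift$ that follows from $r>m+m^{1/3}\tstar$.

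\textbf{The key lower bound.} For $m\geq1$ we have $\mushift=m^2-1/4\leq m^2$. Hence
\[
r^2-\mushift\geq (m+m^{1/3}\tstar)^2-m^2\geq 2m^{4/3}\tstar .
\]
Since $m\geq1$ and $\tstar>1$, this gives three consequences we will use:
\begin{align*}
r^2-\mushift&\geq 2\tstar,\\
\frac{\mushift}{(r^2-\mushift)^{3/2}}&\leq \frac{m^2}{2^{3/2}m^2\tstar^{3/2}}\leq \frac{1}{\tstar^{3/2}},\\
\frac{\mushift}{r^2-\mushift}&\leq \frac{m^{2/3}}{2\tstar}.
\end{align*}
The last quotient may be large, so it must always appear paired with further negative powers of $r^2-\mushift$.

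\textbf{The bounds \eqref{eq:KrasikovJbound} and \eqref{eq:KrasikovYbound}.} Multiply \eqref{eq:KrasikovJ} and \eqref{eq:KrasikovY} by $(r^2-\mushift)^{1/4}$. The right-hand side becomes $\frac{13\mushift}{12\sqrt{2\pi}(r^2-\mushift)^{3/2}}$, and the second estimate above bounds this by $\frac{13}{12\sqrt{2\pi}\tstar^{3/2}}$.

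\textbf{The derivative bounds \eqref{eq:KrasikovJpbound} and \eqref{eq:KrasikovYpbound}.} Multiply \eqref{eq:KrasikovJp} and \eqref{eq:KrasikovYp} by $r/(r^2-\mushift)^{1/4}$. The right-hand side becomes
\[
\frac{5}{\sqrt{8\pi}}\frac{r^2}{(r^2-\mushift)^{3/2}}+\frac{13\mushift r^2}{24\sqrt{2\pi}(r^2-\mushift)^3}.
\]
Write $r^2=(r^2-\mushift)+\mushift$ and expand each term:
\begin{itemize}
\item $\frac{r^2}{(r^2-\mushift)^{3/2}}=\frac{1}{(r^2-\mushift)^{1/2}}+\frac{\mushift}{(r^2-\mushift)^{3/2}}\leq \frac{1}{(2\tstar)^{1/2}}+\frac{1}{\tstar^{3/2}}$.
\item $\frac{\mushift r^2}{(r^2-\mushift)^{3}}=\frac{\mushift}{(r^2-\mushift)^{2}}+\frac{\mushift^2}{(r^2-\mushift)^{3}}$. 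The second piece is the square of the second estimate, so it is at most $\tstar^{-3}$. For the first piece, $\frac{\mushift}{(r^2-\mushift)^2}\leq \frac{m^2}{4m^{8/3}\tstar^2}\leq \tstar^{-2}/4$.
\end{itemize}
Since $\tstar>1$, every power $\tstar^{-3/2},\tstar^{-2},\tstar^{-3}$ is bounded by $\tstar^{-1/2}$. The total is therefore at most
\[
\Big(\tfrac{5}{\sqrt{8\pi}}\big(2^{-1/2}+1\big)+\tfrac{13}{24\sqrt{2\pi}}\big(\tfrac14+1\big)\Big)\tstar^{-1/2}
=\tfrac{1}{\sqrt{2\pi}}\Big(\tfrac{5}{2}\big(2^{-1/2}+1\big)+\tfrac{65}{96}\Big)\tstar^{-1/2}.
\]
Numerically the bracket is about $4.27+0.68<7$, which yields the constant $7/\sqrt{2\pi}$.

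\textbf{Expected obstacle.} The only delicate point is keeping the constants explicit and uniform in $m$. This hinges on the exponent bookkeeping: $\mushift\sim m^2$ must be offset by $(r^2-\mushift)^{3/2}\gtrsim m^2\tstar^{3/2}$. I expect this to be routine arithmetic rather than a conceptual difficulty.
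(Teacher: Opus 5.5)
Your proof is correct and follows the paper's argument: you multiply the estimates of \Cref{lem:besselOscZoneI} by $(r^2-\mushift)^{1/4}$ and by $r/(r^2-\mushift)^{1/4}$ respectively, split $r^2=(r^2-\mushift)+\mushift$, and bound each piece uniformly in $m$. The only cosmetic difference is that you use the explicit bound $r^2-\mushift\geq 2m^{4/3}\tstar$, whereas the paper uses monotonicity in $r$ and substitutes $r=m+m^{1/3}\tstar$; these give the same estimate, and your constant bookkeeping (roughly $4.95<7$) is correct.
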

\begin{proof}
To establish \eqref{eq:KrasikovJbound} we multiply \eqref{eq:KrasikovJ} with $(r^2-\mushift)^{1/4}$
\begin{align*}
|(r^2-\mushift)^{1/4} \besselj_m(r)-\sqrt{\frac{2}{\pi}} \cos(\phaseB(r)-\omshift)|
&\leq \frac{13\mushift}{12\sqrt{2\pi}(r^2-\mushift)^{3/2}}
%\leq \frac{13}{12\sqrt{2\pi}\tstar^{3/2}}
\end{align*}
and note that the new right hand-side is monotonically decreasing in $r\in(\sqrt{\mushift},+\infty)$.
Thus plugging $r=m+m^{1/3}\tstar$ into $\frac{13\mushift}{12\sqrt{2\pi}(r^2-\mushift)^{3/2}}$ and applying elementary estimates yield the desired estimate \eqref{eq:KrasikovJbound}.
Estimate \eqref{eq:KrasikovYbound} follows the very same way.
To establish \eqref{eq:KrasikovJpbound} we multiply \eqref{eq:KrasikovJp} with $\frac{r}{(r^2-\mushift)^{1/4}}$ and estimate
\begin{align*}
|\frac{r}{(r^2-\mushift)^{1/4}}& \besselj'_m(r)+\sqrt{\frac{2}{\pi}} \sin(\phaseB(r)-\omshift)|\\
&\leq
\frac{5}{\sqrt{8\pi}} \frac{r^2}{(r^2-\mushift)^{3/2}}
+\frac{13\mushift r^2}{24\sqrt{2\pi}(r^2-\mushift)^{3}}\\
&=
\frac{5}{\sqrt{8\pi}} \frac{1}{(r^2-\mushift)^{1/2}}
+\frac{5}{\sqrt{8\pi}} \frac{\mushift}{(r^2-\mushift)^{3/2}}
+\frac{13\mushift}{24\sqrt{2\pi}(r^2-\mushift)^{2}}
+\frac{13\mushift^2}{24\sqrt{2\pi}(r^2-\mushift)^{3}}\\
&\leq
\frac{5}{\sqrt{8\pi}} \frac{1}{m^{2/3}\tstar^{1/2}}
+\frac{5}{\sqrt{8\pi}} \frac{1}{\tstar^{3/2}}
+\frac{13}{24\sqrt{2\pi}m^{2/3}\tstar^2}
+\frac{13}{24\sqrt{2\pi}\tstar^3}\\
&\leq
\frac{5}{\sqrt{2\pi}} \frac{1}{\tstar^{1/2}}
+\frac{13}{12\sqrt{2\pi}\tstar^2}
\leq
\frac{7}{\sqrt{2\pi}} \frac{1}{\tstar^{1/2}},
\end{align*}
where in the step from the third to the fourth line we exploited the monotonicity in $r\in(\sqrt{\mushift},+\infty)$.
Estimate \eqref{eq:KrasikovYpbound} follows the very same way.
\end{proof}
Note that estimates for $F''_m(r)$, $F=\besselj,\bessely$ in $r>\sqrt{\mushift}$ can be obtained from
\begin{align}
\label{eq:JYppp}
F_m''=\frac{-1}{r}F_m'(r)-\frac{r^2-m^2}{r^2}F_m(r),\quad F=\besselj,\bessely,
\end{align}
which itself follows from $F_m''(r)=(r^{-1}rF_m'(r))'=\frac{-1}{r}F_m'(r)+\frac{1}{r}\big(\frac{m^2}{r}-r\big)F_m(r)$.

\subsection{Gaps of roots and extrema}

\begin{proposition}\label{prop:cdistbes}
Let
\begin{align*}
\cgap:=\min\Big\{\besseljz'_{1,1},1,\frac{2\besseljz_{1,0}}{2\besseljz_{1,0}+1}\Big\}.
\end{align*}
Then it holds that
\begin{align*}
|\besseljzc_{1,m}|\geq\cgap,\quad
|\besseljzc_{n+1,m}-\besseljzc_{n,m}|\geq\cgap,\quad
&\text{for all}\quad \indn\in\setN,\indm\in\Nz.
\end{align*}
\end{proposition}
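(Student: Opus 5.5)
The plan is to treat the lower bound on $\besseljzc_{1,m}$ separately, and to control all consecutive gaps at once with a Prüfer-angle argument for Bessel's equation. The angle sweeps exactly $\pi/2$ between consecutive elements of the interlaced sequence \eqref{eq:interlacing}, and its speed is bounded by $1+\frac{1}{2r}$.

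\emph{First element.} For $m=0$ we have $\besseljzc_{1,0}=\besseljz_{1,0}\approx 2.405>1\geq\cgap$. For $m=1$ we have $\besseljzc_{1,1}=\besseljz'_{1,1}\geq\cgap$ by definition. For $m\geq2$ we use the classical bound $\besseljz'_{1,m}\geq m$ (the zeros of $\besselj_m'$ lie beyond $m$, see e.g.\ \cite[Sect.~10.21]{NISTDLMF}). It gives $\besseljzc_{1,m}\geq 2>\besseljz'_{1,1}\approx1.84\geq\cgap$. More generally, every element of $(\besseljzc_{n,m})_n$ satisfies $\besseljzc_{n,m}\geq\besseljzc_{1,m}$. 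We will use this below in the form
\begin{align*}
\besseljzc_{n,m}\geq\besseljz_{1,0}\ \text{ if } m=0,\qquad \besseljzc_{n,m}\geq\besseljz'_{1,1}\ \text{ if } m\geq1 .
\end{align*}

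\emph{Prüfer angle.} Fix $m\in\Nz$ and write $J=\besselj_m$. Define $\theta(r)$ continuously on $r>0$ by $J=\rho\sin\theta$ and $J'=\rho\cos\theta$, where $\rho=(J^2+J'^2)^{1/2}>0$; $J$ and $J'$ have no common zero for $r>0$. From $\tan\theta=J/J'$ and the Bessel equation in the form \eqref{eq:JYppp}, $J''=-\frac1rJ'-(1-\frac{m^2}{r^2})J$, we get
\begin{align*}
\theta'=\frac{J'^2-JJ''}{J^2+J'^2}=\frac{J'^2+\frac1r JJ'+(1-\frac{m^2}{r^2})J^2}{J^2+J'^2}\leq 1+\frac{1}{2r},
\end{align*}
using $|JJ'|\leq\frac12(J^2+J'^2)$ and $1-m^2/r^2\leq1$. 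Zeros of $J$ are exactly the points where $\theta\in\pi\setZ$, and there $\theta'=1>0$. Zeros of $J'$ are exactly the points where $\theta\in\frac\pi2+\pi\setZ$, and there $\theta'=1-m^2/r^2$, which is positive for $r>m$ (by $\besseljz'_{n,m}>m$ for $m\geq1$; trivially for $m=0$). Hence $\theta$ crosses each level $k\pi/2$ only upwards, so it cannot return to a level once it has passed it. Consequently, between two consecutive elements $a=\besseljzc_{n,m}<b=\besseljzc_{n+1,m}$ the angle $\theta$ increases by exactly $\pi/2$.

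\emph{Gap estimate.} Integrating the bound on $\theta'$ gives
\begin{align*}
\frac\pi2=\int_a^b\theta'\,\dd r\leq (b-a)\Big(1+\frac1{2a}\Big),
\qquad\text{i.e.}\qquad b-a\geq\frac{\pi}{2}\,\frac{2a}{2a+1}.
\end{align*}
Since $x\mapsto\frac{2x}{2x+1}$ is increasing, we insert the lower bounds on $a$ from the first step. For $m=0$, $a\geq\besseljz_{1,0}$ gives $b-a\geq\frac\pi2\frac{2\besseljz_{1,0}}{2\besseljz_{1,0}+1}\geq\cgap$. For $m\geq1$, $a\geq\besseljz'_{1,1}$ gives $b-a\geq\frac\pi2\frac{2\besseljz'_{1,1}}{2\besseljz'_{1,1}+1}>1\geq\cgap$.

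The main obstacle is the rigorous bookkeeping of the Prüfer angle: showing that the increment is exactly $\pi/2$ between interlaced neighbours, with no back-and-forth crossings. This rests on the sign of $\theta'$ at the level sets and hence on the inequality $\besseljz'_{n,m}>m$, which I would quote from the literature rather than reprove. The remaining steps are elementary.
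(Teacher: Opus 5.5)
Your proof is correct, and it takes a genuinely different route from the paper's. The paper starts from the Rayleigh-type identity $\sum_{k\geq1} j_{k,m}^2/\big((j'_{n,m})^2-j_{k,m}^2\big)^2=\frac14-\frac{m^2}{4(j'_{n,m})^2}$ of \cite[(F5)]{LangowskiNowak25}. Bounding a single summand by $\frac14$ gives $|j'_{n,m}-j_{k,m}|\geq 2j_{k,m}/(j'_{n,m}+j_{k,m})$ for \emph{every} pair $(n,k)$. A short case distinction then follows: $j'_{n,m}<j_{k,m}$ gives a gap $\geq1$, and $0<j'_{n,m}-j_{k,m}<1$ gives a gap $\geq 2j_{k,m}/(2j_{k,m}+1)\geq 2j_{1,0}/(2j_{1,0}+1)$. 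Combined with the quoted interlacing \eqref{eq:interlacing}, this yields $\cgap$.

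Your Prüfer argument needs only Bessel's equation and $j'_{n,m}>m$. Its key point is that the term $-m^2/r^2$ can only slow the angle down, so $\theta'\leq 1+\frac{1}{2r}$ holds uniformly in $m$ and the $m$-uniformity comes for free. In addition, the upward-only crossing of the levels $k\pi/2$ proves the interlacing \eqref{eq:interlacing} as a by-product instead of assuming it. You also obtain the sharper, location-dependent bound $b-a\geq\frac{\pi}{2}\frac{2a}{2a+1}$. This is asymptotically sharp as $n\to\infty$, since the true spacing of the interlaced zeros tends to $\pi/2$, whereas the paper's estimate only yields gaps of order $1$.

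What the paper's route buys in exchange is a separation estimate between any zero of $\besselj_m$ and any zero of $\besselj_m'$, not only between neighbours. The price is importing a nontrivial series identity and the interlacing result from the literature.
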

\begin{proof}
First, note that $\min_{m\in\Nz}\besseljzc_{1,m}=\besseljz'_{1,1}$.
Due to \eqref{eq:interlacing} it suffices to estimate
\begin{align*}
\besseljz'_{n,0}-\besseljz_{n,0},\quad
|\besseljz'_{n,0}-\besseljz_{n+1,0}|\quad \text{ for }n\in\setN
\end{align*}
and
\begin{align*}
|\besseljz'_{1,m}-\besseljz_{1,m}|,\quad
\besseljz'_{n,m}-\besseljz_{n-1,m},\quad
|\besseljz'_{n,m}-\besseljz_{n,m}|\quad \text{ for } n\in\setN\setminus\{1\}, m\in\setN.
\end{align*}
We recall from \cite[(F5)]{LangowskiNowak25} that
\begin{align*}
\sum_{k\geq1} \frac{\besseljz_{k,m}^2}{\big((\besseljz'_{n,m})^2-\besseljz_{k,m}^2\big)^2}
=\frac{1}{4}-\frac{m^2}{4(\besseljz'_{n,m})^2}, \quad n\geq1.
\end{align*}
Thus
\begin{align*}
|\besseljz'_{n,m}-\besseljz_{k,m}| \geq \frac{2\besseljz_{k,m}}{\besseljz'_{n,m}+\besseljz_{k,m}}, \quad n,k\in\setN,m\in\Nz.
\end{align*}
If $\besseljz'_{n,m}<\besseljz_{k,m}$, then $|\besseljz'_{n,m}-\besseljz_{k,m}| \geq1$.
The case $\besseljz'_{n,m}>\besseljz_{k,m}$ and $\besseljz'_{n,m}-\besseljz_{k,m} \geq1$ does not require additional treatment.
Hence the case $\besseljz'_{n,m}>\besseljz_{k,m}$ and $\besseljz'_{n,m}-\besseljz_{k,m} <1$ remains, for which we estimate
$|\besseljz'_{n,m}-\besseljz_{k,m}| \geq \frac{2\besseljz_{k,m}}{2\besseljz_{k,m}+1}\geq\frac{2\besseljz_{1,0}}{2\besseljz_{1,0}+1}$.
Thus the proof is finished.
\end{proof}
\begin{remark}
Note that an alternative proof of \Cref{prop:cdistbes} (with a less explicit constant $\cgap$) follows along the lines of \Cref{lem:ev-conv} and \Cref{prop:evgap}.
\end{remark}

\subsection{Boundedness of Bessel function products}

% plot pi*n*x*besselj(n,n*x)*1/2*(bessely(n-1,n*x)-bessely(n+1,n*x)), x=0...1, n=30
% https://www.wolframalpha.com/input?i=plot+pi*n*x*besselj%28n%2Cn*x%29*1%2F2*%28bessely%28n-1%2Cn*x%29-bessely%28n%2B1%2Cn*x%29%29%2C+x%3D0...1%2C+n%3D30
% plot pi*m*x*besselj(m,m*x)*1/2*(bessely(m-1,m*x)-bessely(m+1,m*x)),pi*n*x*besselj(n,n*x)*1/2*(bessely(n-1,n*x)-bessely(n+1,n*x)), x=0.0...1, n=20, m=10
% https://www.wolframalpha.com/input?i=plot+pi*m*x*besselj%28m%2Cm*x%29*1%2F2*%28bessely%28m-1%2Cm*x%29-bessely%28m%2B1%2Cm*x%29%29%2Cpi*n*x*besselj%28n%2Cn*x%29*1%2F2*%28bessely%28n-1%2Cn*x%29-bessely%28n%2B1%2Cn*x%29%29%2C+x%3D0.0...1%2C+n%3D20%2C+m%3D10
%
% Plot[x*(BesselJ(100-1,x)-BesselJ(100+1,x))*BesselY(100,x)/100, {x,90,100}]
% Det[{{besselj[0,x*c],-besselj[0,x*c],-bessely[0,x*c]},{d/dx besselj[0,x*c],-d/dx besselj[0,x*c]*0.99,-d/dx bessely[0,x*c]*0.99}/c,{0,besselj[0,x],bessely[0,x]}

\begin{lemma}\label{lem:Crjy}
There exists a constant $\Crjy>0$ such that
\(\sup_{r>0,m\in\Nz}|r\besselj'_m(r)\bessely_m(r)|\leq\Crjy.\)
\end{lemma}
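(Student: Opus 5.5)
We split the range of $r$ into three zones relative to the turning point $m$: the small-argument zone $0<r\le m-m^{1/3}\tstar$, the transition zone $|r-m|\le m^{1/3}\tstar$, and the oscillatory zone $r\ge m+m^{1/3}\tstar$. Here $\tstar>1$ is a fixed constant, chosen large enough that \Cref{lem:besselOscZoneII} applies. For finitely many $m$ (say $m\le m_0$), including $m=0$, we argue directly. The function $r\mapsto r\besselj'_m(r)\bessely_m(r)$ is continuous on $(0,\infty)$. As $r\to0$ we have $r\besselj_m'(r)\sim m\besselj_m(r)$ and $\besselj_m(r)\bessely_m(r)\to -1/(m\pi)$ for $m\ge1$, so the product tends to $-1/\pi$. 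For $m=0$, $r\besselj_0'=-r\besselj_1\sim -r^2/2$, which kills the logarithmic singularity of $\bessely_0$. As $r\to\infty$ the product is bounded by the classical large-argument asymptotics (or by \Cref{lem:besselOscZoneI} applied to fixed $m$). Hence it is bounded for each fixed $m$, and it remains to give bounds uniform in large $m$.

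\emph{Oscillatory zone.} By \Cref{lem:besselOscZoneII},
\[
r\besselj_m'(r)\bessely_m(r)=\Big[\tfrac{r}{(r^2-\mushift)^{1/4}}\besselj_m'(r)\Big]\Big[(r^2-\mushift)^{1/4}\bessely_m(r)\Big],
\]
and both brackets are bounded by $\sqrt{2/\pi}+7/\sqrt{2\pi}$ uniformly in $m$ and $r$ in this zone.

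\emph{Transition zone.} Write $r=m+m^{1/3}t$ with $t\in[-\tstar,\tstar]$. Then $r=O(m)$, while by \Cref{lem:besselTransZone}
\[
|\besselj_m'(r)|\lesssim m^{-2/3},\qquad |\bessely_m(r)|\lesssim m^{-1/3},
\]
with constants depending only on the suprema of $\Ai',\Bi$ on the compact interval. The product is therefore $O(1)$.

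\emph{Small-argument zone.} This is the main obstacle, since $\bessely_m$ is exponentially large there while $\besselj_m'$ is exponentially small, so the cancellation must be captured. I would use the Wronskian \eqref{eq:Wronskian} to write
\[
r\besselj_m'\bessely_m=\frac{2}{\pi}+r\besselj_m\bessely_m'.
\]
Neither form is obviously better, so instead I would work with the ratios $\besselj_m'/\besselj_m>0$ and $-\bessely_m'/\bessely_m>0$, which hold on $(0,m)$ since there $\besselj_m>0,\besselj_m'>0,\bessely_m<0,\bessely_m'>0$. Setting $a:=r\besselj_m'/\besselj_m$ and $b:=r\bessely_m'/|\bessely_m|$, the Wronskian gives
\[
\besselj_m|\bessely_m|\,(a+b)=\frac{2}{\pi},
\qquad\text{so}\qquad
|r\besselj_m'\bessely_m|=\frac{2}{\pi}\,\frac{a}{a+b}\le\frac{2}{\pi}.
\]
This uses only sign information, which is known for $r<\besseljz'_{1,m}$ and $r<\besselyz_{1,m}$, both of which exceed $m$. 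This argument in fact covers all of $(0,\min(\besseljz'_{1,m},\besselyz_{1,m}))$ and every $m\ge1$. The case $m=0$ is handled by the same device on $(0,\besseljz_{1,0})$, with the sign of $\besselj_0'=-\besselj_1$ adjusted, plus compactness and the large-$r$ asymptotics.

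The remaining points to check are that the zones overlap, for $m$ large and the chosen $\tstar$, and that the sign facts hold up to $m$, or at least up to $m-m^{1/3}\tstar$. Both follow from the Airy asymptotics of \Cref{lem:besselTransZone}, which place the first zeros $\besseljz'_{1,m}$ and $\besselyz_{1,m}$ at $m+O(m^{1/3})$. Taking the maximum of the constants from the three zones and the finitely many small $m$ yields $\Crjy$.
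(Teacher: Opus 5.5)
Your proof is correct in substance. It keeps the paper's three-zone structure: fixed small $m$ (including $m=0$) by continuity and endpoint limits, the window around $r=m$ by \Cref{lem:besselTransZone}, and $r\ge m+m^{1/3}\tstar$ by \Cref{lem:besselOscZoneII}. On the hard region $(0,m]$, however, you take a genuinely different and more elementary route.

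The paper writes $\bessely_m=\frac{2}{\pi}\besselj_m\int_{\besselyz_{1,m}}^r\frac{dz}{z\besselj_m(z)^2}$ (Watson) and differentiates $f_m=\frac{\pi}{2}r\besselj_m'\bessely_m$. It shows that every interior critical value lies in $(-1,0)$. It then still has to control the endpoint values as $r\to0^+$ and at $r=m$, the latter uniformly in $m$ via an $m\to\infty$ asymptotic.

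You instead observe that on $(0,\besseljz'_{1,m})$ the two terms $r\besselj_m'|\bessely_m|$ and $r\besselj_m\bessely_m'$ are nonnegative and sum to $\frac{2}{\pi}$. This gives $|r\besselj_m'\bessely_m|\le\frac{2}{\pi}$ immediately, with an explicit constant, uniformly in $m\ge1$. It needs neither critical-point analysis nor endpoint asymptotics. It even reaches slightly past $r=m$, so the left half of your transition window is redundant. What your route requires in exchange is the classical location of the first zeros of $\besselj_m'$, $\bessely_m$, $\bessely_m'$. These must be cited correctly, see (ii).

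Three small corrections:

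(i) Your display $r\besselj_m'\bessely_m=\frac{2}{\pi}+r\besselj_m\bessely_m'$ inherits the sign misprint in \eqref{eq:Wronskian}. The correct identity is $\besselj_m\bessely_m'-\besselj_m'\bessely_m=\frac{2}{\pi r}$ \cite[Eq.~10.5.2]{NISTDLMF}, which is the sign used in the proof of \Cref{lem:det}. Only this sign yields $\besselj_m|\bessely_m|(a+b)=\frac{2}{\pi}$; with the printed sign the left side would have to equal $-\frac{2}{\pi}$.

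(ii) Do not derive the sign facts from \Cref{lem:besselTransZone}. That lemma only controls a compact $t$-window and cannot exclude zeros further to the left, so using it to identify the \emph{first} zeros is circular. Cite instead the interlacing $m\le\besseljz'_{1,m}<\besselyz_{1,m}<y'_{1,m}<\besseljz_{1,m}$ \cite[Eq.~10.21.3]{NISTDLMF}. Note that $\bessely_m'>0$ needs $r<y'_{1,m}$, not merely $r<\besselyz_{1,m}$.

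(iii) For $m=0$ the ``same device'' does not work. On $(0,\besselyz_{1,0})$ both $\besselj_0'$ and $\bessely_0$ are negative, so $r\besselj_0\bessely_0'=\frac{2}{\pi}+r\besselj_0'\bessely_0$ is a sum of two positive terms and bounds nothing. This is harmless, since your fixed-$m$ compactness argument already covers $m=0$.
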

\begin{proof}
\emph{0.\ case $m=0$:}\quad
$r\besselj'_0(r)\bessely_0(r)$ is continuous functions on $r\in(0,+\infty)$.
The well-known asymptotics for $r\besselj'_0(r)$ and $\bessely_0(r)$ yields that $\limsup_{r\to+\infty} |r\besselj'_0(r)\bessely_0(r)| \leq 2\pi$.
In addition, $\lim_{r\to0+} |r\besselj'_0(r)\bessely_0(r)|=1/\pi$ \cite[Eq.~10.6.2., 10.7.3, 10.7.4]{NISTDLMF}.
Thus $\Crjy^{(0)}:=\sup_{r>0}|r\besselj'_0(r)\bessely_0(r)|<+\infty$.\\
To proceed we split $(0,+\infty)$ into $(0,m]$, $[m,m+m^{1/3}\tstar]$, $[m^{1/3}\tstar,+\infty)$ with an arbitrary constant $\tstar>1$.\\
\emph{1.\ case $r\in(0,m]$:}\quad
Let $\besselyz_{1,m}$ be the first positive zero of $\bessely_m$.
We recall \cite[p.133]{Watson95} that \[\bessely_m(r)=\frac{2}{\pi}\besselj_m(r) \int_{\besselyz_{1,m}}^r \frac{1}{z\besselj_m(z)^2}\,\dd z.\]
Thus for
$f_m(r):=\frac{\pi}{2}r\besselj'_m(r)\bessely_m(r)=r\besselj'_m(r)\besselj_m(r) \int_{\besselyz_{1,m}}^r \frac{1}{z\besselj_m(z)^2}\,\dd z$ we compute that
\[f_m'(r)=\frac{\besselj'_m(r)}{\besselj_m(r)}+\Big( \big(\frac{m^2}{r}-r\big)\besselj_m(r)^2+r\besselj'_m(r)^2 \Big) \int_{\besselyz_{1,m}}^r \frac{1}{z\besselj_m(z)^2}\,\dd z.\]
Hence for any root $r_*\in(0,m)$ of $f'_m$ it holds that
$\int_{\besselyz_{1,m}}^{r_*} \frac{1}{z\besselj_m(z)^2}\,\dd z=-\frac{\besselj'_m(r_*)}{\besselj_m(r_*)}\frac{1}{(\frac{m^2}{r_*}-r_*)\besselj_m(r)^2+r_*\besselj'_m(r_*)^2}$.
We deduce that $f_m(r_*)=\frac{-r_*\besselj'_m(r_*)^2}{(\frac{m^2}{r_*}-r_*)\besselj_m(r_*)^2+r_*\besselj'_m(r_*)^2}\in(-1,0)$.
With \cite[Eq.~10.6.2., 10.7.3, 10.7.4]{NISTDLMF} we compute that $\lim_{r\to0+}f_m(r)=-1/2$ for each $m\in\Nz$.
On the other hand it follows by means of \cite[p.232]{Watson95} that $\lim_{m\to+\infty} f_m(m)=\frac{3^{1/2}\Gammaf(1/3)\Gammaf(2/3)}{4}$ (with $\Gammaf$ meaning the Gamma function).
It follows that $\Crjy^{(1)}:=\sup_{r\in(0,m),m\in\setN}|r\besselj'_m(r)\bessely_m(r)|<+\infty)$.\\
\emph{2.\ case $r\in[m,m+m^{1/3}\tstar]$:}\quad
In this regime it holds that $r/m\in [1,1+\tstar]$.
Hence by means of \eqref{eq:SherJp} and \eqref{eq:SherY} it follows that $\Crjy^{(2)}:=\sup_{r\in[m,m+m^{1/3}\tstar],m\in\setN}|r\besselj'_m(r)\bessely_m(r)|<+\infty$.\\
\emph{3.\ case $r\in[m+m^{1/3}\tstar],+\infty)$:}\quad
Applying the estimates \eqref{eq:KrasikovJpbound} and \eqref{eq:KrasikovYbound} yields that $\Crjy^{(3)}$ $:=$ $\sup_{r\in[m+m^{1/3}\tstar,+\infty),m\in\setN}|r\besselj'_m(r)\bessely_m(r)|<+\infty$.\\
The claim \(\sup_{r>0,m\in\Nz}|r\besselj'_m(r)\bessely_m(r)|\leq\Crjy\)
follows now with $\Crjy:=\max\{\Crjy^{(0)},\Crjy^{(1)},\Crjy^{(2)},\Crjy^{(3)}\}$.
\end{proof}
Having established \Cref{lem:Crjy} the next two lemmas follow without much effort.
\begin{lemma}\label{lem:Crjj}
There exists a constant $\Crjj>0$ such that
\(\sup_{r>0,m\in\Nz}|r\besselj'_m(r)\besselj_m(r)|\leq\Crjj.\)
\end{lemma}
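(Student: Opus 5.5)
We follow the same case split as in the proof of \Cref{lem:Crjy}: the case $m=0$ on its own, and for $m\in\setN$ the regions $(0,m]$, $[m,m+m^{1/3}\tstar]$ and $[m+m^{1/3}\tstar,+\infty)$ with a fixed $\tstar>1$. The claim follows once $|r\besselj_m'(r)\besselj_m(r)|$ is bounded uniformly in each region; then $\Crjj$ is the maximum of the four suprema.

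\emph{Case $m=0$.} The function $r\besselj_0'(r)\besselj_0(r)=-r\besselj_1(r)\besselj_0(r)$ is continuous on $[0,\infty)$ and vanishes at $r=0$. By the standard large-argument asymptotics, $\sqrt{r}\besselj_0(r)$ and $\sqrt{r}\besselj_1(r)$ stay bounded, so the product is bounded as $r\to+\infty$. Hence its supremum is finite.

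\emph{Case $r\in(0,m]$, $m\in\setN$.} Here no roots of $\besselj_m$ or $\besselj_m'$ occur, since $\besseljz'_{1,m}>m$. Thus $\besselj_m>0$ and $\besselj'_m>0$, and $r\besselj_m'\besselj_m=\tfrac12 r(\besselj_m^2)'$ is nonnegative. We use the recurrence $r\besselj_m'(r)=m\besselj_m(r)-r\besselj_{m+1}(r)\le m\besselj_m(r)$, which gives
\[
0\le r\besselj_m'(r)\besselj_m(r)\le m\besselj_m(r)^2 .
\]
It therefore suffices to show $\sup_{r\le m} m\besselj_m(r)^2<\infty$. Since $\besselj_m$ is increasing on $(0,m]$, this reduces to $m\besselj_m(m)^2$. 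By \eqref{eq:SherJ} with $t=0$ we have $\besselj_m(m)\sim 2^{1/3}\Ai(0)m^{-1/3}$, so $m\besselj_m(m)^2=O(m^{1/3})$. This bound is \emph{not} uniform, so this crude route fails and we need a sharper one.

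Instead we compare directly with \Cref{lem:Crjy}. On $(0,m]$ we have $\bessely_m<0$ with $|\bessely_m|\ge c\besselj_m$ for a constant $c>0$ independent of $m$. Indeed, $\bessely_m(r)/\besselj_m(r)$ is increasing, since its derivative equals $-\tfrac{2}{\pi r\besselj_m^2}\cdot(-1)>0$ by \eqref{eq:Wronskian}. Hence its maximum on $(0,m]$ is attained at $r=m$, where by \eqref{eq:SherJ}, \eqref{eq:SherY} it tends to $-\Bi(0)/\Ai(0)=-\sqrt3$. So for large $m$ we get $|\bessely_m(r)|\ge \besselj_m(r)$ on $(0,m]$, and the finitely many remaining $m$ are handled by continuity and compactness together with the behaviour as $r\to0$. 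Consequently $|r\besselj'_m\besselj_m|\le|r\besselj'_m\bessely_m|\le\Crjy$.

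\emph{Case $r\in[m,m+m^{1/3}\tstar]$.} Here $r\le m(1+\tstar)$. By \eqref{eq:SherJp} and \eqref{eq:SherJ}, $|\besselj_m'|\lesssim m^{-2/3}$ and $|\besselj_m|\lesssim m^{-1/3}$ uniformly, since $\Ai$ and $\Ai'$ are bounded on compacts. So the product is $O(1)$; small $m$ are covered by the same uniform statement in \Cref{lem:besselTransZone}.

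\emph{Case $r\ge m+m^{1/3}\tstar$.} We write
\[
r\besselj'_m\besselj_m=\Big[\tfrac{r}{(r^2-\mushift)^{1/4}}\besselj'_m\Big]\Big[(r^2-\mushift)^{1/4}\besselj_m\Big].
\]
By \eqref{eq:KrasikovJpbound} and \eqref{eq:KrasikovJbound}, each bracket is bounded by $\sqrt{2/\pi}$ plus a constant depending only on $\tstar$, which gives the uniform bound. The main obstacle is the region $(0,m]$, where naive bounds lose a factor $m^{1/3}$; the monotonicity of $\bessely_m/\besselj_m$ combined with \Cref{lem:Crjy} is what we rely on there.
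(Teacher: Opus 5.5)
Your proof is correct. The cases $m=0$, $[m,m+m^{1/3}\tstar]$ and $[m+m^{1/3}\tstar,\infty)$ are exactly the ``similar to \Cref{lem:Crjy}'' part of the paper's proof. On $(0,m]$, however, you take a genuinely different route.

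\textbf{The region $(0,m]$.} The paper only says it uses $|\besselj_m|,|\besselj_m'|\le 1$ there. Taken literally, this gives $|r\besselj_m'(r)\besselj_m(r)|\le r\le m$. That is not uniform in $m$, and is even worse than the $O(m^{1/3})$ of your discarded crude attempt, so the paper's one-liner does not by itself settle this region. Your argument does close it:
\begin{itemize}
\item $\bessely_m/\besselj_m$ is increasing on $(0,m]$;
\item its value at $r=m$ tends to $-\Bi(0)/\Ai(0)=-\sqrt3<-1$ by \eqref{eq:SherJ} and \eqref{eq:SherY};
\item hence $\besselj_m\le|\bessely_m|$ on $(0,m]$ for large $m$, and \Cref{lem:Crjy} applies.
\end{itemize}
The price is that you inherit the case-1 argument of \Cref{lem:Crjy}.

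\textbf{Two small fixes.} First, the derivative of $\bessely_m/\besselj_m$ is $+\frac{2}{\pi r\besselj_m^2}$ by the correctly signed Wronskian $\besselj_m\bessely_m'-\besselj_m'\bessely_m=\frac{2}{\pi r}$, as used in the proof of \Cref{lem:det}. The display \eqref{eq:Wronskian} has the sign reversed, so cite the correct identity rather than inserting a factor $(-1)$. Second, drop the failed $m\besselj_m^2$ route from the write-up.

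\textbf{A shorter alternative.} There is an even shorter self-contained route for $(0,m]$, which the paper itself uses in \Cref{lem:Crrjjyy}. By the Bessel equation, $(r\besselj_m\besselj_m')'=r(\besselj_m')^2+\frac{m^2-r^2}{r}\besselj_m^2\ge0$ on $(0,m]$. Hence $0\le r\besselj_m'(r)\besselj_m(r)\le m\besselj_m(m)\besselj_m'(m)$. This last quantity is bounded uniformly in $m$ by \eqref{eq:SherJ} and \eqref{eq:SherJp}, and tends to $-2\Ai(0)\Ai'(0)$. This avoids any dependence on \Cref{lem:Crjy}.
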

\begin{proof}
The proof is similar to that of \ref{lem:Crjy}, where in the region $r\in[0,m]$ we use that $|\besselj_m(r)|,|\besselj'_m(r)|$ $\leq1$ \cite[Eq.~10.6.1,10.14.1]{NISTDLMF}.
\end{proof}
\begin{lemma}\label{lem:Crrjjyy}
Let $q_1,q_2$ be constants with $0<q_1\leq q_2\leq1$.
Then there exists a constant $\Crrjjyy>0$ such that
\(\sup_{q\in[q_1,q_2],r>0,m\in\Nz}|r^2\besselj'_m(qr)\besselj_m(qr)\bessely'_m(r)\bessely_m(r)|
\leq\Crrjjyy.\)
\end{lemma}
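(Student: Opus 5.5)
The plan is to reduce the claim to \Cref{lem:Crjj}, \Cref{lem:Crjy} and the Wronskian identity \eqref{eq:Wronskian}. We rewrite the product and use monotonicity of $\besselj_m$ below the turning point. Since $q\geq q_1>0$, we have
\begin{align*}
r^2\besselj'_m(qr)\besselj_m(qr)\bessely'_m(r)\bessely_m(r)
=q^{-1}\,\big(qr\besselj'_m(qr)\big)\,\besselj_m(qr)\,\big(r\bessely'_m(r)\bessely_m(r)\big),
\end{align*}
so the factor $q^{-1}\leq q_1^{-1}$ is harmless. The difficulty is that $r\bessely'_m(r)\bessely_m(r)$ is unbounded as $r\to0+$ (it behaves like $r^{-2m}$). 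The growth of the $\bessely$-factors must therefore be compensated by the decay of the $\besselj$-factors, but only where $r$ is small relative to $m$. We split $(0,+\infty)$ into $(0,m]$ and $[m,+\infty)$. The case $m=0$ is done separately by continuity together with the small-$r$ and large-$r$ asymptotics, as in case~0 of the proof of \Cref{lem:Crjy}.

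\emph{Region $r\in(0,m]$, $m\in\setN$.} First, $\besselj_m$ is positive and increasing on $(0,\besseljz'_{1,m})$ and $\besseljz'_{1,m}>m$, so $0<\besselj_m(qr)\leq\besselj_m(r)$ for $0<qr\leq r\leq m$. Second, the Bessel equation gives $(x\besselj'_m(x))'=(\frac{m^2}{x}-x)\besselj_m(x)>0$ for $x\in(0,m)$. Hence $x\besselj'_m(x)$ is increasing and positive there, and $0<qr\besselj'_m(qr)\leq r\besselj'_m(r)$. Substituting both bounds gives
\begin{align*}
|r^2\besselj'_m(qr)\besselj_m(qr)\bessely'_m(r)\bessely_m(r)|
\leq q_1^{-1}\,|r\besselj'_m(r)\bessely_m(r)|\,|r\besselj_m(r)\bessely'_m(r)|.
\end{align*}
The first factor is bounded by $\Crjy$ from \Cref{lem:Crjy}. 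For the second factor, the Wronskian \eqref{eq:Wronskian} gives $r\besselj_m\bessely'_m=r\besselj'_m\bessely_m-\frac{2}{\pi}$, so it is bounded by $\Crjy+\frac{2}{\pi}$.

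\emph{Region $r\geq m$.} Here we bound the two groups in the factorization separately. The first group satisfies $|qr\besselj'_m(qr)\besselj_m(qr)|\leq\Crjj$ by \Cref{lem:Crjj}, regardless of where $qr$ lies. It remains to show $\sup_{r\geq m,\,m\in\setN}|r\bessely'_m(r)\bessely_m(r)|<+\infty$. On $[m,m+m^{1/3}\tstar]$ this follows from \eqref{eq:SherY} and \eqref{eq:SherYp}: they give $|\bessely_m|\lesssim m^{-1/3}$ and $|\bessely'_m|\lesssim m^{-2/3}$, while $r\lesssim m$. On $[m+m^{1/3}\tstar,+\infty)$ it follows from \eqref{eq:KrasikovYbound} and \eqref{eq:KrasikovYpbound}. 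These control $(r^2-\mushift)^{1/4}\bessely_m(r)$ and $\frac{r}{(r^2-\mushift)^{1/4}}\bessely'_m(r)$ by absolute constants, and their product is exactly $r\bessely'_m\bessely_m$. Taking the maximum of the constants over all regions, with $q\in[q_1,q_2]$ entering only through $q_1^{-1}$, yields $\Crrjjyy$.

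I expect the main obstacle to be the region $r\leq m$. Naive use of asymptotics there loses powers of $m$; for instance, near $r=m$ the crude bound $qr\besselj'_m\leq m\besselj_m$ produces a factor $m^{2/3}$. The monotonicity of both $\besselj_m$ and $x\besselj'_m(x)$ on $(0,m)$ is what removes $q$ from the $\besselj$-factors, and the Wronskian then converts $r\besselj_m\bessely'_m$ into the quantity already bounded in \Cref{lem:Crjy}. One detail to verify is $\besseljz'_{1,m}>m$, which is classical.
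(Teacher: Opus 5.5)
Your proposal is correct and follows the paper's proof essentially step for step: you split at $r=m$; on $(0,m]$ you use monotonicity to replace $qr$ by $r$ and then apply \Cref{lem:Crjy} together with the Wronskian; on $[m,+\infty)$ you factor off $q^{-1}\,qr\besselj'_m(qr)\besselj_m(qr)$, bounded by \Cref{lem:Crjj}, and bound $r\bessely'_m(r)\bessely_m(r)$ via \Cref{lem:besselTransZone,lem:besselOscZoneII}. Your version is slightly more careful than the paper's in three places: you justify the monotonicity through $(x\besselj_m'(x))'=(m^2/x-x)\besselj_m(x)$, you keep the factor $q_1^{-1}$ explicit for $r\leq m$, and you treat $m=0$ separately, which is needed because $\sup_{r>0}|r\bessely_0'(r)\bessely_0(r)|$ is infinite due to the logarithmic singularity of $\bessely_0$, a point the paper's supremum over $r>m$, $m\in\Nz$ glosses over.
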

\begin{proof}
Let $q>0$ be fixed.
In the region $r\in[m,+\infty)$ we estimate
\begin{align*}
\sup_{r>m,m\in\Nz}|r^2\besselj'_m(qr)\besselj_m(qr)\bessely'_m(r)\bessely_m(r)|
&\leq
q^{-1}\sup_{r>m,m\in\Nz}|qr\besselj'_m(qr)\besselj_m(qr)|
\sup_{r>m,m\in\Nz}|r\bessely'_m(r)\bessely_m(r)|\\
&\leq q^{-1}\Crjj \sup_{r>m,m\in\Nz}|r\bessely'_m(r)\bessely_m(r)|
\end{align*}
with $\Crjj$ as in \Cref{lem:Crjj}.
The term $\sup_{r>m,m\in\Nz}|r\bessely'_m(r)\bessely_m(r)|$ can be uniformly bounded by repeating the technique used in the proof of \Cref{lem:Crjy} (second and third case).
On $r\in[0,m]$ we exploit that $r\mapsto r\besselj'_m(qr)\besselj_m(qr)$ is monoton increasing to estimate
\begin{align*}
\sup_{r\in[0,m],m\in\Nz}|r^2\besselj'_m(qr)\besselj_m(qr)\bessely'_m(r)\bessely_m(r)|
&\leq
\sup_{r\in[0,m],m\in\Nz}|r^2\besselj'_m(r)\besselj_m(r)\bessely'_m(r)\bessely_m(r)|\\
&\leq
\sup_{r\in[0,m],m\in\Nz}|r\besselj'_m(r)\bessely_m(r)|
\sup_{r\in[0,m],m\in\Nz}|r\besselj_m(r)\bessely'_m(r)|
\end{align*}
Due to \Cref{lem:Crjy} and \eqref{eq:Wronskian} we obtain that
$\sup_{r>0,m\in\Nz}|r^2\besselj'_m(qr)\besselj_m(qr)\bessely'_m(r)\bessely_m(r)|<+\infty$.
The final claim follows by a continuity argument and the fact that $[q_1,q_2]$ is compact.
\end{proof}

\section{The selfadjoint perturbation}\label{sec:perturbation-sa}

In this section study the properties of the eigenvalue problem \eqref{eq:evp-sa} (or rather the family \eqref{eq:evp-sa-m} respectively).
Points of interest are the gaps between eigenvalues (see \Cref{subsec:gaps}) and trace inequalities for the eigenfunctions (see \Cref{subsec:traceofefs}).

\subsection{Reduction of $\Aop_{11}\bv=\nu^2 \Kop_{11}\bv$ to a scalar problem}

To study the properties of the selfadjoint eigenvalue problem associated to \eqref{eq:evp-sa} we note that \eqref{eq:evp-sa} decouples into the two separate problems
\begin{subequations}
\begin{align}
\label{eq:evp-bv}
\text{find}\quad (\nu,\bv)\in\setC\times\Vspace\setminus\{0\}\quad\text{such that}\quad
\Aop_{11}\bv&=\nu^2 \Kop_{11}\bv,\\
\label{eq:evp-w}
\text{find}\quad (\nu,w)\in\setC\times W\setminus\{0\}\quad\text{such that}\quad
\Aop_{22} w&=\nu^2 \Kop_{22} w.
\end{align}
\end{subequations}
As a next step we reduce the analysis of the vectorial problem \eqref{eq:evp-bv} to a problem for the scalar potential.
Let
\begin{align*}
\Deltamu v:=\divg(\mu\gradg v)=-\curlg(\mu\Curlg v)
\quad\text{and}\quad
\Deltaeps w:=\divg(\epsilon\gradg w)=-\curlg(\epsilon\Curlg w).
\end{align*}
\begin{lemma}\label{lem:vector-to-scalar}
Let $\nu\in\setC$.
Then $\bv\in\Vspace\setminus\{\mathbf{0}\}$, $\bv=\mu\Curlg v$, $v\in\Vpotspace\setminus\{0\}$ solves \eqref{eq:evp-bv},
if and only if it's potential
$v\in V$ solves
\begin{align}\label{eq:evp-v}
-\divg(\mu\gradg v)&=\nu^2 \mu v \quad\text{in }\domcs \quad\text{and}\quad
\mu\partial_{\nvt}\gradg v=0 \quad\text{on }\partial\domcs.
\end{align}
In such a case, if in addition $\bv=\bv_m\in\Vspace_m$ or equivalently $v=v_m\in\Vm$, $m\in\setZ$, then
\begin{align}\label{eq:trace-bvm}
\nvt\cdot\mu^{-1}\bv_m=im v_m \quad\text{on }\intf.
\end{align}
\end{lemma}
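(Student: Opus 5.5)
We write out the weak form of \eqref{eq:evp-bv} and then substitute the potential representation. By definition of $\Aop_{11}=\DVec^*\Mmui\DVec$ and $\Kop_{11}=\embLVec^*\MmuiVec\embLVec$, the problem \eqref{eq:evp-bv} reads
\[
\spl\mu^{-1}\curlg\bv,\curlg\bv\tf\spr_{\domcs}=\nu^2\spl\mu^{-1}\bv,\bv\tf\spr_{\domcs}
\qquad\text{for all }\bv\tf\in\Vspace .
\]
We insert $\bv=\mu\Curlg v$ and $\bv\tf=\mu\Curlg v\tf$ with $v,v\tf\in\Vpotspace$. Since $\Curlg=-\rotm\gradg$ and $\rotm$ is orthogonal, the right-hand side becomes $\nu^2\spl\mu\gradg v,\gradg v\tf\spr_{\domcs}$. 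On the left we use $\curlg(\mu\Curlg v)=-\Deltamu v=-\divg(\mu\gradg v)$. Setting $g:=\mu^{-1}\curlg\bv=-\mu^{-1}\divg(\mu\gradg v)$, the left-hand side is $\spl g,\curlg(\mu\Curlg v\tf)\spr=\spl \mu g,-\mu^{-1}\divg(\mu\gradg v\tf)\spr$.

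Next we identify the boundary conditions and the normalization. The constraint $\mu\Curlg v\in\Hspace_0(\curlg;\domcs)$ means $\tv\cdot\mu\Curlg v=0$ on $\partial\domcs$. Since $\tv\cdot\Curlg v=\pm\partial_{\nvt}v$, this is exactly $\mu\partial_{\nvt}v=0$, i.e.\ a Neumann condition for the operator $\divg(\mu\gradg\cdot)$. The potential is unique only up to constants, which is why we normalize $v\in V=H^1_{\mu^{-1}*}(\domcs)$ (or with whatever mean-zero weight makes the equivalence clean). With this, $\bv\mapsto v$ is a bijection $\Vspace\to\Vpotspace\cap V$; this uses that $\domcs$ is simply connected and that $\divg(\mu^{-1}\bv)=0$.

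With these ingredients the equivalence follows in both directions.

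\emph{($\Leftarrow$)} Suppose $v$ solves \eqref{eq:evp-v}. Then $-\divg(\mu\gradg v)=\nu^2\mu v$, so $g=\nu^2 v$. Integrating by parts in the left-hand side against $\bv\tf=\mu\Curlg v\tf$ (the tangential boundary term vanishes because $\bv\tf\in\Hspace_0(\curlg)$) gives $\spl\mu^{-1}\curlg\bv,\curlg\bv\tf\spr=\spl\Curlg g,\mu\Curlg v\tf\spr=\nu^2\spl\mu\Curlg v,\Curlg v\tf\spr$. This is the right-hand side, so the weak form holds.

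\emph{($\Rightarrow$)} Suppose the weak form holds. Testing with all $\bv\tf=\mu\Curlg v\tf$, we may choose $\curlg\bv\tf=-\divg(\mu\gradg v\tf)$ to be an arbitrary $L^2$ function with compatible mean, by solvability of the Neumann problem. This yields $g-\nu^2 v=\text{const}$. The normalization in $V$, together with integrating the equation, forces the constant to vanish (for $\nu\neq0$; the case $\nu=0$ is treated by noting that then $\curlg\bv=0$ and $\bv\in\Vspace$ imply $\bv=0$). Hence \eqref{eq:evp-v} holds.

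\emph{Main obstacle.} The delicate point is this bookkeeping of constants and mean values in the potential, i.e.\ choosing the normalization space so that the map $v\mapsto\bv$ is an isomorphism compatible with the eigenrelation. I would handle it by working with the mean-free quotient and checking the zero-mean compatibility of $-\divg(\mu\gradg v)$ under Neumann conditions.

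For the trace identity \eqref{eq:trace-bvm} we compute in polar coordinates. With $v=v_m(r)(2\pi)^{-1/2}e^{im\theta}$ we have $\Curlg v=\bigl(r^{-1}\partial_\theta v\bigr)\hat r-(\partial_r v)\hat\theta$ up to the sign convention. Therefore
\[
\nvt\cdot\mu^{-1}\bv=\nvt\cdot\Curlg v=r^{-1}\partial_\theta v=im\,r^{-1}v .
\]
On $\intf$ this gives $im\,v_m$ up to the factor $r^{-1}$ and the normalization convention, which I would reconcile with the paper's notation.
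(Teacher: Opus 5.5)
Your proof is correct, and your ($\Leftarrow$) direction is the paper's. For ($\Rightarrow$) you take a different route. The paper first passes to the strong form $\Curlg(\mu^{-1}\curlg\bv)=\nu^2\mu^{-1}\bv$. It concludes that $\tilde v:=\mu^{-1}\curlg\bv$ is itself a Neumann eigenfunction ($-\Deltamu\tilde v=\nu^2\mu\tilde v$, $\mu\partial_{\nvt}\tilde v=0$). It then identifies $v=\nu^{-2}\tilde v$ by uniqueness of the Neumann source problem $-\Deltamu u=\mu\tilde v$ in $V$. That route presupposes $\mu^{-1}\curlg\bv\in H^1(\domcs)$, which needs an extra argument, e.g. extending the test space from $\Vspace$ to $\Hspace_0(\curlg;\domcs)=\Vspace\oplus\gradg H^1_0(\domcs)$.

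You instead test only with $\bv\tf=\mu\Curlg v\tf$, where $v\tf$ is the Neumann solution for arbitrary mean-zero data $f$. Combined with $\spl\mu\gradg v,\gradg v\tf\spr_{\domcs}=\spl v,f\spr_{\domcs}$, this gives $\spl\mu^{-1}\curlg\bv-\nu^2v,f\spr_{\domcs}=0$ at once. That identity is the weak Neumann equation for $v\tf$ tested with $v$; you skip it, so write it out. In your route the regularity of $\mu^{-1}\curlg\bv$ comes for free. The paper's route, in exchange, exhibits the eigenfunction $\tilde v$ explicitly. The closing mean-value step is the same in both.

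There are two places where you hedge but should commit, because your suspicion is justified in both.

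\emph{Normalization.} Integrate $-\Deltamu v=\nu^2\mu v+c\mu$ and use $\int_\domcs\curlg\bv=0$, which follows from $\tv\cdot\bv=0$. This gives $\nu^2\int_\domcs\mu v+c\int_\domcs\mu=0$, so $c=0$ exactly when $\int_\domcs\mu v=0$. Hence $V$ must be read as $H^1_{\mu*}(\domcs)$. With the printed $H^1_{\mu^{-1}*}(\domcs)$ the constant need not vanish for $m=0$ when $\mu$ is non-constant; for $m\neq0$ both weighted means vanish automatically. The paper's proof also tacitly uses the $\mu$-weight when it asserts $\mu^{-1}\curlg\bv\in V$.

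\emph{Trace identity.} Your computation $\nvt\cdot\mu^{-1}\bv_m=\nvt\cdot\Curlg v_m=r^{-1}\partial_\theta v_m=im\,r^{-1}v_m$ is right. The paper's one-line computation drops the Jacobian factor, so on $\intf_l$ the identity actually reads $\nvt\cdot\mu^{-1}\bv_m=\frac{im}{r_{l+1}}v_m$. There is nothing to reconcile: the factor is genuinely there. It is harmless later because $r_{l+1}\in[r_2,1)$ is fixed.
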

\begin{proof}
Let $\bv=\mu\Curlg v$ solve \eqref{eq:evp-bv}.
Note that $\nu\neq0$.
It follows that
\begin{align}\label{eq:lem-reductionI}
\mu^{-1}\curlg\bv\in V% H^1_{\mu*}(\domcs)
\quad\text{and}\quad
\tv\cdot\mu(\mu^{-1}\curlg\bv)=0 \quad\text{on }\partial\domcs.
\end{align}
Let $\tilde v:=\mu^{-1} \Deltamu v$.
Then \eqref{eq:lem-reductionI} yields that $\tilde v\in V$
and $\mu\partial_{\nvt} \tilde v=0$ on $\partial\domcs$.
Applying integration by parts in the variational formulation of \eqref{eq:evp-bv} yields further that $-\Deltamu \tilde v=\nu^2\mu \tilde v$ in $\domcs$.
There exists a unique solution to the problem:
\begin{align}\label{eq:lem-reductionII}
\text{find } u\in V%H^1_{\mu*}(\domcs)
\text{ such that } -\Deltamu u=\mu\tilde v \text{ in } \domcs \quad\text{and}\quad \mu\partial_{\nvt}u=0 \text{ on } \partial\domcs.
\end{align}
It can be checked that the solution to \eqref{eq:lem-reductionII} equals $\nu^{-2}\tilde v$.
Thus $v=\nu^{-2} \tilde v$ solves \eqref{eq:evp-v}.\\
Vice-versa, let $v$ solve \eqref{eq:evp-v} and $\bv=\mu\Curl v$.
Then $\tv\cdot\bv=\mu\partial_{\nvt} v=0$ on $\partial\domcs$.
Furthermore, $\mu^{-1}\curlg\bv=\nu^2 v$ and thus
$\Curlg \mu^{-1}\curlg\bv=\nu^2 \Curlg v=\nu^2 \mu^{-1} \bv$ in $\domcs$.
Since $\bv\in\Vspace$ it follows that $\bv$ solves \eqref{eq:evp-bv}.\\
To prove the last statement we compute that
$\nvt\cdot\mu^{-1}\bv_m=\nvt\cdot\Curlg v_m=\partial_\theta v_m=im v_m$ on $\intf$.
\end{proof}

\subsection{Reduction of Laplace eigenvalue problems}\label{subsec:reductiontomatrix}

Our goal is to relate the eigenvalues of the problems
\begin{subequations}\label{eq:evps-scalar}
\begin{align}
\label{eq:evps-scalar-v}
\text{find } (\nu,v)\in\setC\times V%H^1_{\mu*}(\domcs)
\setminus\{0\}
\text{ such that }
-\Deltamu v&=\nu^2 \mu v\quad\text{in }\domcs,&\mu\partial_{\nvt}v&=0\quad\text{on } \partial\domcs,\\
\label{eq:evps-scalar-w}
\text{find } (\nu,w)\in\setC\times W%H^1_0(\domcs)
\setminus\{0\}
\text{ such that }
-\Deltaeps w&=\nu^2 \epsilon w\quad\text{in }\domcs,&\hspace{-25mm} \phantom{\mu\partial_{\nvt}}w&=0\quad\text{on } \partial\domcs,
\end{align}
\end{subequations}
to those of the homogeneous cases $\epsilon=\epsilon_0$ and $\mu=\mu_0$ respectively.
Since the perturbation of $\epsilon$ and $\mu$ in \eqref{eq:evps-scalar} is not small in any subordinate sense (see, e.g., \cite{Markus88,MityaginSiegl19} for the concept of subordination), convenient abstract perturbation techniques do not provide us with any fruitful means.
Note also that the Courant–Fischer–Weyl min-max principle would only provide (for our purpose) insufficient estimates
\begin{align*}
\nu_{\mathrm{Neu},\mu_0}^2 \frac{\mumin}{\mumax}
\leq \nu_{\mathrm{Neu},\mu}^2
\leq \nu_{\mathrm{Neu},\mu_0}^2 \frac{\mumax}{\mumin},\qquad
\nu_{\mathrm{Dir},\epsilon_0}^2 \frac{\epsmin}{\epsmax}
\leq \nu_{\mathrm{Dir},\epsilon}^2
\leq \nu_{\mathrm{Dir},\epsilon_0}^2 \frac{\epsmax}{\epsmin}.
\end{align*}
Instead we exploit our assumptions on $\domcs$ and $\epsilon,\mu$ to reduce \eqref{eq:evps-scalar} to the eigenvalue problems of $2\indL-1$-dimensional holomorphic matrix functions.
First we note that \eqref{eq:evps-scalar} decouples into the family ($m\in\setZ$) of problems
\begin{subequations}\label{eq:evps-scalar-m}
\begin{align}
\label{eq:evps-scalar-vm}
\text{find } (\nu,v_m)\in\setC\times \Vm\setminus\{0\}
\text{ such that }
-\Deltamu v_m&=\nu^2 \mu v_m\quad\text{in }\domcs,&\mu\partial_{\nvt}v&=0\quad\text{on } \partial\domcs,\\
\label{eq:evps-scalar-wm}
\text{find } (\nu,w_m)\in\setC\times \Wm\setminus\{0\}
\text{ such that }
-\Deltaeps w_m&=\nu^2 \epsilon w_w\quad\text{in }\domcs,&\hspace{-25mm} \phantom{\mu\partial_{\nvt}}w_m&=0\quad\text{on } \partial\domcs.
\end{align}
\end{subequations}
Let $(\nu,v_m)$ solve \eqref{eq:evps-scalar-vm}.
Recall that $\mu$ is constant on each interval $(r_l,r_{l+1})$, $l=1,\dots,\indL$ (ball $B_{r_2}$, annuli $B_{r_{l+1}}\setminus B_{r_l}$).
It follows that the function $e^{-im\theta}v_m|_{(r_l,r_{l+1})}$ depends only on $r$ and solves a scaled Bessel equation with index $m$.
Hence $e^{-im\theta}v_m|_{(r_l,r_{l+1})}(r)=a_l\besselj_m(\nu r)+b_l\bessely_m(\nu r)$ with constants $a_l,b_l\in\setC$.
Considering the boundary and transmission conditions implied by \eqref{eq:evps-scalar} it follows that $\vvec$ $:=$ $(a_1,a_2,b_2,\dots,a_{\indL},b_{\indL})^\top$ $\in$ $\setC^{2\indL-1}$ solves $\AmatNeu_{\indL}(\nu)\vvec=0$, where
\begin{subequations}
\begin{align}\label{eq:AmatNeu}
\AmatNeu_{\indL}(\nu):=
\bpm \besselj_{m2} & -\besselj_{m2} & -\bessely_{m2}\\
\frac{\mu_1}{\mu_2}\besselj'_{m2} & -\besselj'_{m2} & -\bessely'_{m2}\\
& \besselj_{m3} & \bessely_{m3} & -\besselj_{m3} & -\bessely_{m3}\\
& \frac{\mu_2}{\mu_3}\besselj'_{m3} & \frac{\mu_2}{\mu_3}\bessely'_{m3} & -\besselj'_{m3} & -\bessely'_{m3}\\
&&& \besselj_{m4} & \bessely_{m4} & -\besselj_{m4} & -\bessely_{m4}\\
&&& \frac{\mu_3}{\mu_4}\besselj'_{m4} & \frac{\mu_3}{\mu_4}\bessely'_{m4} & -\besselj'_{m4} & -\bessely'_{m4}\\
&&&&&\dots&\dots&\dots&\dots\\
&&&&&\dots&\dots&\dots&\dots\\
&&&&&&& \besselj'_{m(\indL+1)} & \bessely'_{m(\indL+1)} \epm
\end{align}
with the abbreviations
$\besselj_{ml}:=\besselj_m(\nu r_l)$,
$\besselj'_{ml}:=\besselj'_m(\nu r_l)$,
$\bessely_{ml}:=\bessely_m(\nu r_l)$,
$\bessely'_{ml}:=\bessely'_m(\nu r_l)$.
%\begin{align*}
%&\Amat:=\\
%&\bpm \besselj_m(r_2 z) & -\besselj_m(r_2 z) & -\bessely_m(r_2 z)\\
%\besselj'_m(r_2 z) & -\besselj'_m(r_2 z) & -\bessely'_m(r_2 z)\\
%& \besselj_m(r_3 z) & \bessely_m(r_3 z) & -\besselj_m(r_3 z) & -\bessely_m(r_3 z)\\
%& \besselj'_m(r_3 z) & \bessely'_m(r_3 z) & -\besselj'_m(r_3 z) & -\bessely'_m(r_3 z)\\
%&&& \besselj_m(r_4 z) & \bessely_m(r_4 z) & -\besselj_m(r_4 z) & -\bessely_m(r_4 z)\\
%&&& \besselj'_m(r_4 z) & \bessely'_m(r_4 z) & -\besselj'_m(r_4 z) & -\bessely'_m(r_4 z)\\
%&&&&&\dots&\dots&\dots&\dots\\
%&&&&&\dots&\dots&\dots&\dots\\
%&&&&&&& \besselj_m(r_{\indL+1} z) & \bessely_m(r_{\indL+1} z) \epm
%\end{align*}
%%}
%
Similarly, if $(\nu,w_m)$ solves \eqref{eq:evps-scalar-wm}, then $e^{-im\theta}w_m|_{(r_l,r_{l+1})}(r)=a_l\besselj_m(\nu r)+b_l\bessely_m(\nu r)$ with constants $a_l,b_l\in\setC$ and $\wvec:=(a_1,a_2,b_2,\dots,a_{\indL},b_{\indL})^\top\in\setC^{2\indL-1}$ solves $\AmatDir_{\indL}(\nu)\wvec=0$, where
\begin{align}\label{eq:AmatDir}
\AmatDir_{\indL}(\nu):=
\bpm \besselj_{m2} & -\besselj_{m2} & -\bessely_{m2}\\
\frac{\epsilon_1}{\epsilon_2}\besselj'_{m2} & -\besselj'_{m2} & -\bessely'_{m2}\\
& \besselj_{m3} & \bessely_{m3} & -\besselj_{m3} & -\bessely_{m3}\\
& \frac{\epsilon_2}{\epsilon_3}\besselj'_{m3} & \frac{\epsilon_2}{\epsilon_3}\bessely'_{m3} & -\besselj'_{m3} & -\bessely'_{m3}\\
&&& \besselj_{m4} & \bessely_{m4} & -\besselj_{m4} & -\bessely_{m4}\\
&&& \frac{\epsilon_3}{\epsilon_4}\besselj'_{m4} & \frac{\epsilon_3}{\epsilon_4}\bessely'_{m4} & -\besselj'_{m4} & -\bessely'_{m4}\\
&&&&&\dots&\dots&\dots&\dots\\
&&&&&\dots&\dots&\dots&\dots\\
&&&&&&& \besselj_{m(\indL+1)} & \bessely_{m(\indL+1)} \epm,
\end{align}
i.e., compared to $\AmatNeu_{\indL}$ the last row is changed and $\mu,\mu_0$ are replaced by $\epsilon,\epsilon_0$ respectively.
Altogether, $\nu$ is an eigenvalue to \eqref{eq:evps-scalar-vm} or \eqref{eq:evps-scalar-wm}, if and only if $\det\AmatNeu_{\indL}(\nu)=0$ or $\det\AmatDir_{\indL}(\nu)=0$ respectively.

\subsection{Computation of $\det\AmatNeu_{\indL}(\nu)$ and $\det\AmatDir_{\indL}(\nu)$}

Note that the case $\indL=1$ corresponds to a homogeneous material and is thus trivial.
Hence it suffices to assume $\indL\geq2$.
In order to express $\det\AmatNeu_{\indL}(\nu),\det\AmatDir_{\indL}(\nu)$ we introduce the auxiliary matrices $\BmatNeu_{\indL}(\nu), \BmatNeu_{\indL}(\nu) \in\setC^{(2\indL-3)\times(2\indL-3)}$ as follows:
%\begin{align*}
%\Bmat_{\indL}:=
%\bpm \bessely_{m2} & -\besselj_{m2} & -\bessely_{2m}\\
%\frac{\epsilon_1}{\epsilon_2}\bessely'_{m2} & -\besselj'_{m2} & -\bessely'_{m2}\\
%& \besselj_{m3} & \bessely_{m3} & -\besselj_{m3} & -\bessely_{m3}\\
%& \frac{\epsilon_2}{\epsilon_3}\besselj'_{m3} & \frac{\epsilon_2}{\epsilon_3}\bessely'_{3m} & -\besselj'_{3m} & -\bessely'_{3m}\\
%&&& \besselj_{m4} & \bessely_{4m} & -\besselj_{m4} & -\bessely_{4m}\\
%&&& \frac{\epsilon_3}{\epsilon_4}\besselj'_{4m} & \frac{\epsilon_3}{\epsilon_4}\bessely'_{4m} & -\besselj'_{4m} & -\bessely'_{4m}\\
%&&&&&\dots&\dots&\dots&\dots\\
%&&&&&\dots&\dots&\dots&\dots\\
%&&&&&&& \besselj_{m(\indL+1)} & \bessely_{m(\indL+1)} \epm
%\end{align*}
\begin{align}\label{eq:BmatNeu}
\BmatNeu_{\indL}(\nu):=
\bpm \bessely_{m2} & -\besselj_{m2} & -\bessely_{m2}\\
\frac{\mu_1}{\mu_2}\bessely'_{m2} & -\besselj'_{m2} & -\bessely'_{m2}\\
& \besselj_{m3} & \bessely_{m3} & -\besselj_{m3} & -\bessely_{m3}\\
& \frac{\mu_2}{\mu_3}\besselj'_{m3} & \frac{\mu_2}{\mu_3}\bessely'_{m3} & -\besselj'_{m3} & -\bessely'_{m3}\\
&&& \besselj_{m4} & \bessely_{m4} & -\besselj_{m4} & -\bessely_{m4}\\
&&& \frac{\mu_3}{\mu_4}\besselj'_{m4} & \frac{\mu_3}{\mu_4}\bessely'_{m4} & -\besselj'_{m4} & -\bessely'_{m4}\\
&&&&&\dots&\dots&\dots&\dots\\
&&&&&\dots&\dots&\dots&\dots\\
&&&&&&& \besselj'_{m(\indL+1)} & \bessely'_{m(\indL+1)} \epm,
\end{align}
and
\begin{align}\label{eq:BmatDir}
\BmatDir_{\indL}(\nu):=
\bpm \bessely_{m2} & -\besselj_{m2} & -\bessely_{m2}\\
\frac{\epsilon_1}{\epsilon_2}\bessely'_{m2} & -\besselj'_{m2} & -\bessely'_{m2}\\
& \besselj_{m3} & \bessely_{m3} & -\besselj_{m3} & -\bessely_{m3}\\
& \frac{\epsilon_2}{\epsilon_3}\besselj'_{m3} & \frac{\epsilon_2}{\epsilon_3}\bessely'_{m3} & -\besselj'_{m3} & -\bessely'_{m3}\\
&&& \besselj_{m4} & \bessely_{m4} & -\besselj_{m4} & -\bessely_{m4}\\
&&& \frac{\epsilon_3}{\epsilon_4}\besselj'_{m4} & \frac{\epsilon_3}{\epsilon_4}\bessely'_{m4} & -\besselj'_{m4} & -\bessely'_{m4}\\
&&&&&\dots&\dots&\dots&\dots\\
&&&&&\dots&\dots&\dots&\dots\\
&&&&&&& \besselj_{m(\indL+1)} & \bessely_{m(\indL+1)} \epm,
\end{align}
\end{subequations}
which are obtained from $\AmatNeu_{\indL}(\nu)$ and $\AmatDir_{\indL}(\nu)$ respectively by a subsequent modification of the first column.
Note that we have no intrinsic interest in $\BmatNeu_{\indL}(\nu)$ and $\BmatDir_{\indL}(\nu)$, but we have to include them into the statement of the forthcoming \Cref{lem:det} for technical reasons.
\begin{lemma}\label{lem:det}
Let $p:=r_2/r_{\indL+1}$.
There exist $\Cdet>0$ and $f^\mathrm{Dir}_m,f^\mathrm{Neu}_m\in C(\Rp)$ such that
\begin{subequations}
\begin{align}\label{eq:induction-Dir}
\begin{aligned}
\Pi_{l=2}^{\indL} 
\Big(\frac{\pi zr_l}{2}\Big)
\det\AmatDir_{\indL}(z)&=\besselj_m(zr_{\indL+1})+f^\mathrm{Dir}_m(z),\\
|f^\mathrm{Dir}_m(z)| &\leq \Cdet\deltaem\max\{|\besselj_m(z)|,|\bessely_m(z)|\},\\
%|f^\mathrm{Dir}_m(z)| &\leq \Cdet\deltaem\max\{|\besselj_m(z r_{\indL+1})|,|\bessely_m(z r_{\indL+1})|\},\\
|z^{\indL}
\besselj_m(z q)\besselj'_m(z q)\det\BmatDir_{\indL}(z)|
&\leq \Cdet\max\{|\besselj_m(z)|,|\bessely_m(z)|\},
%&\leq \Cdet\max\{|\besselj_m(z r_{\indL+1})|,|\bessely_m(z r_{\indL+1})|\},
\end{aligned}
\end{align}
and
\begin{align}\label{eq:induction-Neu}
\begin{aligned}
\Pi_{l=2}^{\indL} 
\Big(\frac{\pi zr_l}{2}\Big)
\det\AmatNeu_{\indL}(z)&=\besselj_m(zr_{\indL+1})+f^\mathrm{Neu}_m(z),\\
|f^\mathrm{Neu}_m(z)| &\leq \Cdet\deltaem\max\{|\besselj'_m(z)|,|\bessely'_m(z)|\},\\
%|f^\mathrm{Neu}_m(z)| &\leq \Cdet\deltaem\max\{|\besselj'_m(z r_{\indL+1})|,|\bessely'_m(z r_{\indL+1})|\},\\
|z^{\indL}
\besselj_m(z q)\besselj'_m(z q)\det\BmatNeu_{\indL}(z)|
&\leq \Cdet\max\{|\besselj'_m(z)|,|\bessely'_m(z)|\},
%&\leq \Cdet\max\{|\besselj'_m(z r_{\indL+1})|,|\bessely'_m(z r_{\indL+1})|\},
\end{aligned}
\end{align}
\end{subequations}
for all $q\in [pr_2,r_2]$, $z>0$ and $m\in\Nz$.
\end{lemma}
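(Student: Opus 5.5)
Since the statement is formulated in terms of $\indL$ and the matrices are block-banded, I would prove it by induction on the number of layers $\indL\geq2$, expanding $\det\AmatDir_{\indL}$ and $\det\BmatDir_{\indL}$ along the last two rows (the Neumann case is identical with $\besselj_m,\bessely_m$ replaced by $\besselj'_m,\bessely'_m$ in the last row, and $\epsilon$ by $\mu$). The structural fact I would use is that the first $2\indL-3$ columns of $\AmatDir_{\indL}$ coincide with $\AmatDir_{\indL-1}$ (with the outer radius replaced by $r_{\indL}$), except that the row for $r_{\indL}$ now carries the transmission factor $\epsilon_{\indL-1}/\epsilon_{\indL}$. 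A Laplace expansion along the last two columns $(a_{\indL},b_{\indL})$ then writes $\det\AmatDir_{\indL}(z)$ as a combination of $\det\AmatNeu_{\indL-1}$-type and $\det\AmatDir_{\indL-1}$-type minors, i.e.\ the Dirichlet and Neumann data of the inner problem at $r_{\indL}$, multiplied by $2\times2$ minors of the form $\besselj_m(zr_{\indL})\bessely_m(zr_{\indL+1})-\bessely_m(zr_{\indL})\besselj_m(zr_{\indL+1})$ and its derivative analogue. Equivalently, one propagates the vector (value, flux) across each annulus by the Bessel transfer matrix, whose determinant is fixed by the Wronskian \eqref{eq:Wronskian}; this explains the normalisation $\Pi_{l}(\pi zr_l/2)$.

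For the base case and the leading term: if all $\epsilon_l=\epsilon_0$, the transfer matrices compose exactly and the Wronskian identity \eqref{eq:Wronskian} collapses the $(\bessely,\besselj)$-cross products, giving exactly $\besselj_m(zr_{\indL+1})$ after multiplication by $\Pi(\pi zr_l/2)$. In the general case I would write each ratio $\epsilon_{l-1}/\epsilon_l=1+O(\deltaem)$ and expand, so that $f^{\mathrm{Dir}}_m$ is a finite sum of terms, each carrying at least one factor $(\epsilon_{l-1}/\epsilon_l-1)$ multiplying products of the form $zr_l\besselj'_m(zr_l)\bessely_m(zr_l)$, $zr_l\besselj'_m\besselj_m$, and entries of the final row evaluated at $z$ (noting $r_{\indL+1}=1$). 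Then \Cref{lem:Crjy} and \Cref{lem:Crjj}, together with the Wronskian to handle the $r\besselj_m\bessely'_m$ products, bound every interior factor uniformly in $z,m$, leaving only the outer factor $\max\{|\besselj_m(z)|,|\bessely_m(z)|\}$, which is the claimed bound.

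The auxiliary $\BmatDir_{\indL}$ estimate arises because in the expansion of $\det\AmatDir_{\indL}$ along the first column one gets $\besselj_m(zr_2)$ and $\besselj'_m(zr_2)$ times minors that are exactly $\det\BmatDir$-type objects; so I would carry both statements simultaneously in the induction. The factor $\besselj_m(zq)\besselj'_m(zq)$ with $q\in[pr_2,r_2]$, combined with a $\bessely_m\bessely'_m$ factor at a larger radius, is controlled by \Cref{lem:Crrjjyy} (which is why $q$ ranges over a compact interval of ratios and why the power $z^{\indL}$ appears: one power of $z$ per interface Wronskian normalisation).

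The main obstacle is the bookkeeping guaranteeing that no term in the expansion contains an uncontrolled product like $\bessely_m(zr_l)^2$ alone (which blows up for $zr_l\ll m$): every $\bessely_m$ at an interior radius must be paired with a $\besselj_m$ or $\besselj'_m$ at the same or a smaller radius so that \Cref{lem:Crjy,lem:Crjj,lem:Crrjjyy} apply, and $\besselj_m(zr)$ decays for $r$ decreasing while $\bessely_m$ grows. I would first try to organise the expansion via the transfer-matrix product and check, by induction, that each entry of the cumulative product, scaled by the Wronskian factors, is a bounded combination of such paired products; the uniformity in $m$ in the regime $zr_l<m$ is where I expect the real difficulty, possibly requiring the monotonicity of $r\mapsto r\besselj_m(qr)\besselj'_m(qr)$ used in \Cref{lem:Crrjjyy}.
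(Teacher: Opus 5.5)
Your third paragraph has the right mechanism, but the induction you actually set up does not close with the lemma as induction hypothesis. That induction is the Laplace expansion along the last two columns, i.e.\ peeling off the outer annulus. The ``main obstacle'' you flag at the end is exactly where it breaks. Your expansion reads
\[
\det\AmatDir_{\indL}(z)=\tfrac{\epsilon_{\indL-1}}{\epsilon_{\indL}}\big(\besselj_{m\indL}\bessely_m(z)-\bessely_{m\indL}\besselj_m(z)\big)\det N_{\indL-1}-\big(\besselj'_{m\indL}\bessely_m(z)-\bessely'_{m\indL}\besselj_m(z)\big)\det D_{\indL-1},
\]
where $D_{\indL-1},N_{\indL-1}$ are the inner Dirichlet/Neumann-type matrices with outer radius $r_{\indL}<1$. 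The hypothesis controls the inner error only by $C\deltaem\max\{|\besselj_m(zr_{\indL})|,|\bessely_m(zr_{\indL})|\}$. Multiplied by $\tfrac{\pi zr_{\indL}}{2}\bessely'_m(zr_{\indL})\besselj_m(z)$, this gives for $z\ll m$ a bound of order $\deltaem|\bessely_m(z)|\,r_{\indL}^{-2m}$. That is not $\leq\Cdet\deltaem\max\{|\besselj_m(z)|,|\bessely_m(z)|\}$ uniformly in $m$. The true inner error is much smaller there, but the stated form of the lemma discards this, and you have not formulated a sharper hypothesis.

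For the same reason your second paragraph is false for $\indL\geq3$. The expansion contains bare factors $zr_l\bessely_m(zr_l)\bessely'_m(zr_l)$, which \Cref{lem:Crjy,lem:Crjj} cannot bound. An example is the term $(\tfrac{\epsilon_1}{\epsilon_2}-1)(1-\tfrac{\epsilon_2}{\epsilon_3})\besselj_{m2}\besselj'_{m2}\bessely_{m3}\bessely'_{m3}\besselj_m(z)$ in $\det\AmatDir_{3}$. Your proposed remedy, bounding every entry of the cumulative transfer matrix, also fails. The $\bessely\to\besselj$ entry of a single interface matrix is already $\tfrac{\pi zr_l}{2}(1-\tfrac{\epsilon_{l-1}}{\epsilon_l})\bessely_m(zr_l)\bessely'_m(zr_l)$, which is exponentially large for $zr_l\ll m$. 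Only the product applied to the core data (pure $\besselj_m$) is controlled.

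What is missing is to make the first-column expansion the induction step, i.e.\ peel off the innermost interface $r_2$, as the paper does:
\[
\det\Amat_{\indL+1}=(\tfrac{\epsilon_1}{\epsilon_2}-1)\besselj_{m2}\besselj'_{m2}\det\tilde\Bmat_{\indL}+\big(\tfrac{2}{\pi zr_2}+(1-\tfrac{\epsilon_1}{\epsilon_2})\besselj'_{m2}\bessely_{m2}\big)\det\tilde\Amat_{\indL},
\]
\[
\det\Bmat_{\indL+1}=\big(\tfrac{2}{\pi zr_2}+(\tfrac{\epsilon_1}{\epsilon_2}-1)\besselj_{m2}\bessely'_{m2}\big)\det\tilde\Bmat_{\indL}+(1-\tfrac{\epsilon_1}{\epsilon_2})\bessely_{m2}\bessely'_{m2}\det\tilde\Amat_{\indL}.
\]
Here $\tilde\Amat_{\indL},\tilde\Bmat_{\indL}$ belong to the configuration with radii $r_3,\dots,r_{\indL+2}$. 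This works for three reasons.
\begin{enumerate}
\item The sub-configuration keeps the outer radius $1$, so a hypothesis phrased with $\max\{|\besselj_m(z)|,|\bessely_m(z)|\}$ is exactly what propagates.
\item The $\bessely$-started determinant enters $\det\Amat_{\indL+1}$ only with the core weight $\besselj_{m2}\besselj'_{m2}$. This is the weight $\besselj_m(zq)\besselj'_m(zq)$ with $q=r_2$ in the $\BmatDir$-statement. That is why $q$ must range over an interval with a fixed lower ratio: so that $q=r_2$, and $q\in[pr_2,r_2]$, are admissible for the sub-configuration with core radius $r_3$.
\item The only bare $\bessely_{m2}\bessely'_{m2}$, in $\det\Bmat_{\indL+1}$, is paired with the external weight $\besselj_m(zq)\besselj'_m(zq)$, $q\leq r_2$, via \Cref{lem:Crrjjyy}.
\end{enumerate}
The remaining factors are handled by \Cref{lem:Crjy,lem:Crjj} and \eqref{eq:Wronskian}.
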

\begin{proof}
We only present the proof for \eqref{eq:induction-Dir}.
To improve the readability we omit the super indices ``$\mathrm{Dir}$''.
We are going to prove the claim by induction in $\indL$.
For $\indL=2$ we compute that
\begin{align*}
\det\Amat_{2}=\frac{2}{\pi zr_2}\besselj_{m3}
+(1-\frac{\epsilon_1}{\epsilon_2})\besselj'_{m2}\bessely_{m2}\besselj_{m3}
+(\frac{\epsilon_1}{\epsilon_2}-1)\besselj_{m2}\besselj'_{m2}\bessely_{m3}
\end{align*}
and
\begin{align*}
\det\Bmat_{2}=\frac{2}{\pi zr_2}\bessely_{m3}
+(\frac{\epsilon_1}{\epsilon_2}-1)\besselj_{m2}\bessely'_{m2}\bessely_{m3}
+(1-\frac{\epsilon_1}{\epsilon_2})\bessely_{m2}\bessely'_{m2}\besselj_{m3},
\end{align*}
for which the claim follows with \Cref{lem:Crjy,lem:Crjj} and the Wronskian identity $\besselj_m(z)\bessely'_m(z)-\besselj'_m(z)\bessely_m(z)=2/(\pi z)$~\cite[Eq.~10.5.2]{NISTDLMF}.
Let now \eqref{eq:induction-Dir} be satisfied for $\indL-1\geq2$.
We refer to the $(2\indL-1)\times(2\indL-1)$ matrices build by the formulas \eqref{eq:AmatDir}, \eqref{eq:BmatDir} with the evaluation points $r_3,\dots,r_{\indL+2}$ (instead of $r_2,\dots,r_{\indL+1}$) as $\tilde\Amat_{\indL}$ and $\tilde\Bmat_{\indL}$ respectively.
We compute that
\begin{align*}
\det\Amat_{\indL+1}
=(\frac{\epsilon_1}{\epsilon_2}-1)\besselj_{m2}\besselj'_{m2}\det\tilde\Bmat_{\indL}(z)
+\frac{2}{\pi zr_2}\det\tilde\Amat_{\indL}(z)
+(1-\frac{\epsilon_1}{\epsilon_2})\besselj'_{m2}\bessely_{m2}\det\tilde\Amat_{\indL}(z)
\end{align*}
and
\begin{align*}
\det\Bmat_{\indL+1}
=(1-\frac{\epsilon_1}{\epsilon_2})\bessely_{m2}\bessely'_{m2}\det\tilde\Amat_{\indL}(z)
+\frac{2}{\pi zr_2}\det\tilde\Bmat_{\indL}(z)
+(\frac{\epsilon_1}{\epsilon_2}-1)\besselj_{m2}\bessely'_{m2}\det\tilde\Bmat_{\indL}(z).
\end{align*}
To prove the induction step for $\Amat_{\indL+1}$ we compute that
\begin{align*}
\Pi_{l=2}^{\indL+1}& \Big(\frac{\pi zr_l}{2}\Big)
\det\Amat_{\indL+1}(z) -\besselj_m(zr_{\indL+1})=
\Pi_{l=2}^{\indL+1} \Big(\frac{\pi zr_l}{2}\Big)
\big(\det\Amat_{\indL+1}(z) -\frac{2}{\pi zr_2}\det\Amat_{\indL}(z)\big)\\
&=\Pi_{l=2}^{\indL+1} \Big(\frac{\pi zr_l}{2}\Big)
\bigg( (\frac{\epsilon_1}{\epsilon_2}-1)\besselj_{m2}\besselj'_{m2}\det\tilde\Bmat_{\indL}(z)
+(1-\frac{\epsilon_1}{\epsilon_2})\besselj'_{m2}\bessely_{m2}\det\tilde\Amat_{\indL}(z) \bigg)
=:f_m(z)
\end{align*}
and estimate
\begin{align*}
|f_m(z)|&\leq 4\epsilon_0^{-1} \big(\frac{\pi r_{\indL+1}}{2}\Big)^{\indL} \deltaem
|z^{\indL} \besselj_m(z)\besselj'_m(z)\det\tilde\Bmat_{\indL}(z)|\\
&+2\epsilon_0^{-1}\pi \deltaem |zr_2\besselj_m'(zr_2)\bessely_m(zr_2)|
|\Pi_{l=3}^{\indL+1} \Big(\frac{\pi zr_l}{2}\Big)
\det\tilde\Amat_{\indL}(z)|.
\end{align*}
The claim follows now by applying \Cref{lem:Crjy} and the induction assumption.
The prove the induction step for $\Bmat_{\indL+1}$ we compute that
\begin{align*}
&|z^{\indL+1} \besselj_m(zq)\besselj'_m(zq)\det\Bmat_{\indL+1}(z)|\\
&\leq \frac{4\deltaem}{\epsilon_0} \Big(\frac{\pi r_2}{2}\Big)^{-\indL+1} 
|z^2\besselj_m(zq)\besselj'_m(zq)\bessely_m(zr_2)\bessely'_m(zr_2)|
|\Pi_{l=2}^{\indL} \Big(\frac{\pi zr_l}{2}\Big) \det\tilde\Amat_{\indL}(z)|\\
&+\frac{2}{\pi r_2} |z^{\indL} \besselj_m(zq)\besselj'_m(zq)\det\tilde\Bmat_{\indL}(z)|
+\frac{4\deltaem}{\epsilon_0} |z\besselj_m(zr_2)\bessely'_m(zr_2)|
|z^{\indL} \besselj_m(zq)\besselj'_m(zq)\det\tilde\Bmat_{\indL}(z)|.
\end{align*}
Note that $q\in[pr_2,r_2]\subset[p\frac{r_2}{r_3}r_3,r_3]$ (which is the resonon we introduced the $\indL$ independent constant $p$).
The claim follows now by applying \Cref{lem:Crjy,lem:Crjj,lem:Crrjjyy}, \eqref{eq:Wronskian} and the induction assumption.
Thus the proof is finished.
\end{proof}

\subsection{Eigenvalue gaps}\label{subsec:gaps}

Note that the eigenvalues $\nu>0$ of \eqref{eq:evps-scalar-vm} and \eqref{eq:evps-scalar-wm} are precisely the positive roots of $\det\AmatNeu_{\indL}(\cdot)$ and $\det\AmatDir_{\indL}(\cdot)$ respectively.
For each $m\in\Nz$ let $(\tilde\besseljz_{n,m})_{n\in\setN}$ be the non-decreasing sequence of the roots of
$\tilde\besselj_m(z):=\Pi_{l=2}^{\indL} \Big(\frac{\pi zr_l}{2}\Big) \det\AmatDir_{\indL}(z)$,
$(\tilde\besseljpz_{n,m})_{n\in\setN}$ be the non-decreasing sequence of the roots of
$\tilde\besselj'_m(z):=\Pi_{l=2}^{\indL} \Big(\frac{\pi zr_l}{2}\Big) \det\AmatNeu_{\indL}(z)$,
and $(\evsac_{n,m})_{n\in\setN}$ be the compound sequence of the former two ordered in a non-decreasing way.
Our objective is to prove a gap estimate for $(\evsac_{n,m})_{n\in\setN}$ being uniform in $m\in\Nz$ similar to the one stated in \Cref{prop:cdistbes} for $(\besseljzc_{n,m})_{n\in\setN}$.
The claim would follow easily, if $\evsac_{n,m}$ would converge to $\besseljzc_{n,m}$ uniformly in $n\in\setN$ and $m\in\Nz$ as $\deltaem\to0$.
However, this uniform convergence is actually not true.
Instead we have a uniform convergence in a distorted sense, as stated in \Cref{lem:ev-conv}, which suffices to establish \Cref{prop:evgap}.
As preparation we recall the following result from \cite{Hethcote70}.
\begin{lemma}[Thm.~1 of \cite{Hethcote70}]\label{lem:Hethcote}
In the interval $[b-s,b+s]$, suppose $\phi(x)=\psi(x)+\tau(x)$, where $\phi(x)$ is continuous, $\psi(x)$ is differentiable, $\psi(b)=0$, $\min|\psi'(x)|>0$, and $\max|\tau(x)|<\min\{\psi(b-s)|,|\psi(b+s)|\}$.
Then there exists a zero $c\in[b-s,b+s]$ of $\phi(x)$ such that $|c-b|\leq \max|\tau(x)|/\min|\psi'(x)|$.
\end{lemma}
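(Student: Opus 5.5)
This is Hethcote's theorem: a perturbed-root location result via the intermediate value theorem and the mean value theorem. I would prove it directly rather than invoke anything from the paper. Set $T:=\max_{[b-s,b+s]}|\tau|$ and $D:=\min_{[b-s,b+s]}|\psi'|>0$. Since $\psi$ is differentiable, $\psi'$ has the Darboux property. As $|\psi'|\geq D>0$, the derivative $\psi'$ cannot change sign, so $\psi$ is strictly monotone on $[b-s,b+s]$. Without loss of generality $\psi'\geq D$; otherwise replace $\phi,\psi,\tau$ by their negatives. Then $\psi(b-s)<0<\psi(b+s)$, and the mean value theorem gives
\[
\psi(x)\geq D(x-b)\ \text{for } x\geq b,\qquad \psi(x)\leq D(x-b)\ \text{for } x\leq b .
\]

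Next I will locate a sign change of $\phi$ inside the smaller interval $[b-h,b+h]$ with $h:=T/D$. First I need $h\leq s$, and this is where the hypothesis $T<\min\{|\psi(b-s)|,|\psi(b+s)|\}$ enters. That hypothesis does not give $h\leq s$ directly, since $|\psi(b\pm s)|$ can exceed $Ds$. So I split into two cases.

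\emph{Case $h\leq s$.} By the bounds above, $\psi(b+h)\geq Dh=T$ and $\psi(b-h)\leq -T$. If $T>0$ I must handle equality, where only $\phi(b+h)=\psi(b+h)+\tau(b+h)\geq T-T=0$ and $\phi(b-h)\leq 0$ are available. These weak inequalities still suffice: either an endpoint is already a zero, or $\phi$ changes sign on $[b-h,b+h]$. In both situations the intermediate value theorem, using continuity of $\phi$, yields a zero $c$ with $|c-b|\leq h=T/D$. If $T=0$, then $\phi=\psi$ and $c=b$ works.

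\emph{Case $h>s$.} Here the hypothesis gives $\phi(b+s)\geq \psi(b+s)-T>0$ and $\phi(b-s)\leq \psi(b-s)+T<0$. The intermediate value theorem then produces a zero $c\in[b-s,b+s]$, and $|c-b|\leq s<h$.

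The only delicate point is the first step, deducing strict monotonicity of $\psi$ from $\min|\psi'|>0$. This needs Darboux's theorem because $\psi'$ is not assumed continuous. A secondary point is checking that the equality cases at the endpoints $b\pm h$ do not break the intermediate value theorem argument. Everything else is elementary.
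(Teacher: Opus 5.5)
Your proof is correct. The paper gives no proof of its own for this lemma; it only cites Hethcote's Theorem~1. The standard argument behind that result is shorter than yours and takes a different route.

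That argument first shows, as you do, that $\psi$ is strictly monotone. The hypothesis $\max|\tau|<\min\{|\psi(b-s)|,|\psi(b+s)|\}$ then forces $\phi(b\pm s)$ to have the same signs as $\psi(b\pm s)$, which are opposite. So the intermediate value theorem gives a zero $c\in[b-s,b+s]$ directly. The mean value theorem applied at that zero then gives
\[
\min|\psi'|\,|c-b|\le|\psi(c)-\psi(b)|=|\psi(c)|=|\tau(c)|\le\max|\tau| .
\]

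This avoids your case split on $h=T/D$ versus $s$. It also proves slightly more: every zero of $\phi$ in $[b-s,b+s]$ lies within $T/D$ of $b$, not just one of them.

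Your route localizes the sign change directly in $[b-h,b+h]$. Its advantage is that it shows the endpoint hypothesis is only needed when $T/D>s$. When $h\le s$, that hypothesis holds automatically in non-strict form, since $T=Dh\le Ds\le|\psi(b\pm s)|$.

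A minor remark on your monotonicity step: Darboux's theorem is not actually required. The mean value theorem alone gives $|\psi(x)-\psi(y)|\ge D|x-y|$, so $\psi$ is continuous and injective, hence strictly monotone.
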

Recall that the phase function $\phaseB$ and the negative roots $(\Aiz_n)_{n\in\setN}$, $(\Aipz_n)_{n\in\setN}$ of $\Ai(-\cdot)$ and $\Ai'(-\cdot)$ had been introduced in \Cref{sec:bessel}.
In addition, let
\begin{align*}
\rasy_{n,m}:=\omshift-\frac{\pi}{2}+\pi n=\Big(n+\frac{m}{2}-\frac{1}{4}\Big)\pi, \quad n\in\setN, m\in\Nz.
\end{align*}
\begin{lemma}\label{lem:ev-conv}
For each $\eta>0$ there exist $\tstar>1, \delta_0>0$ such that if $\deltaem<\delta_0$, then
\begin{subequations}
\begin{align}
\label{eq:ev-conv-a}
|\besseljzc_{n,0}-\evsac_{n,0}|&<\frac{\eta}{2} \quad
&&\forall n\in\setN,\\
\label{eq:ev-conv-b}
|\rasy_{n,m}-\phaseB(\besseljzc_{n,m})|, |\rasy_{n,m}-\phaseB(\evsac_{n,m})|&<\frac{\eta}{2} \quad
&&\forall n\in\setN,m\in\setN \colon\hspace{2.5mm}\quad \besseljzc_{n,m} > m+m^{1/3}\tstar.
\end{align}
For each $\tstar>1$, $\Col>1$, $\tilde\eta>0$ there exist $\mstar>1,\tilde\delta_0>0$ such that if $\deltaem<\tilde\delta_0$, then
\begin{align}
\label{eq:ev-conv-c}
\Big| \frac{\Aizc_n}{2^{1/3}}-\frac{\besseljzc_{n,m}-m}{m^{1/3}} \Big|,
\Big| \frac{\Aizc_n}{2^{1/3}}-\frac{\evsac_{n,m}-m}{m^{1/3}} \Big|&<\frac{\tilde\eta}{2} 
&&\forall n\in\setN,m>\mstar \colon\quad \besseljzc_{n,m} < m+m^{1/3}\tstar\Col,\\
\label{eq:ev-conv-d}
|\besseljzc_{n,m}-\evsac_{n,m}|&<\frac{\tilde\eta }{2}
&&\forall n\in\setN,m\leq\mstar \colon\quad \besseljzc_{n,m} < m+m^{1/3}\tstar\Col.
\end{align}
\end{subequations}
\end{lemma}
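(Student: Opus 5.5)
The plan is to treat each of the four estimates by comparing the perturbed characteristic functions $\tilde\besselj_m$ and $\tilde\besselj'_m$ with the unperturbed ones $\besselj_m(z)$ and $\besselj'_m(z)$ (recall $r_{\indL+1}=1$). The tool is \Cref{lem:det} combined with the root localization of \Cref{lem:Hethcote}.

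By \Cref{lem:det}, $\tilde\besselj_m(z)=\besselj_m(z)+f_m(z)$ with $|f_m(z)|\le\Cdet\deltaem\max\{|\besselj_m(z)|,|\bessely_m(z)|\}$, and analogously in the Neumann case. The Neumann case needs a small reinterpretation: its leading term should be $\besselj'_m(z)$ rather than $\besselj_m(z)$, since the last row of $\AmatNeu_\indL$ involves derivatives. I would first check this against the $\indL=2$ computation and then treat both cases in parallel. The idea is that the perturbation is small relative to the natural amplitude of the Bessel pair. Each root of the unperturbed function therefore moves only slightly, and no new roots appear, because between roots the unperturbed function is bounded below relative to that amplitude.

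For \eqref{eq:ev-conv-a}, with $m=0$ and $z\ge\cgap$, I would use the large-$z$ asymptotics. In a neighbourhood of $\besseljzc_{n,0}$ of fixed radius $s<\cgap/2$, I apply \Cref{lem:Hethcote} with $\psi=\besselj_0$, $\tau=f_0$. Both $|\psi'|$ near a simple root and $|\psi(b\pm s)|$ are bounded below by $c z^{-1/2}$ uniformly in $n$, since they are governed by the cosine amplitude $\sqrt{2/(\pi z)}$. On the other hand $|\tau|\le C\deltaem z^{-1/2}$. This yields a root within $C\deltaem$. Counting roots, by the same sign pattern at the interval endpoints together with a lower bound for $|\tilde\besselj_0|$ away from these windows, shows that no extra roots occur and that the ordering, and hence the indexing $n$, is preserved. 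Near $z\to0$ finitely many roots are handled by compactness.

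For \eqref{eq:ev-conv-b}, in the oscillatory zone $z>m+m^{1/3}\tstar$, I would rescale by $(z^2-\mushift)^{1/4}$. By \Cref{lem:besselOscZoneII}, $(z^2-\mushift)^{1/4}\besselj_m(z)=\sqrt{2/\pi}\cos(\phaseB(z)-\omshift)+O(\tstar^{-3/2})$, and $(z^2-\mushift)^{1/4}\max\{|\besselj_m|,|\bessely_m|\}$ is bounded. In the variable $\phi=\phaseB(z)$, both rescaled functions are therefore perturbations of $\sqrt{2/\pi}\cos(\phi-\omshift)$ of size $O(\tstar^{-3/2}+\deltaem)$. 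I then apply \Cref{lem:Hethcote} in the $\phi$-variable around $\rasy_{n,m}$, which is exactly a zero of the cosine. The derivative of the cosine there is $\pm\sqrt{2/\pi}$, so the phase error is below $\eta/2$ once $\tstar$ is large and $\deltaem$ is small. The Neumann case is treated the same way via \eqref{eq:KrasikovJpbound}, with the sine playing the role of the cosine (this is also why the interlaced indexing in $\besseljzc$ matches $\rasy_{n,m}$). Matching indices $n$ requires a root count up to the threshold, which I obtain from the transition-zone analysis below.

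For \eqref{eq:ev-conv-c}, in the transition zone $z=m+m^{1/3}t$ with $t\in[t_1,\tstar\Col]$, I would use \Cref{lem:besselTransZone}. Here $m^{1/3}\besselj_m(m+m^{1/3}t)=2^{1/3}\Ai(-2^{1/3}t)+O(m^{-2/3})$, and $m^{1/3}\max\{|\besselj_m|,|\bessely_m|\}$ is bounded on compacts. So after scaling, $\tilde\besselj_m$ is $\Ai(-2^{1/3}t)$ up to $O(m^{-2/3}+\deltaem)$. With $m>\mstar$ large, \Cref{lem:Hethcote} around the Airy zeros $\Aizc_n/2^{1/3}$ (finitely many in the compact $t$-range) gives the claim; for $t<t_1$ there are no roots because $\Ai$ decays and is positive there. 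Finally, for \eqref{eq:ev-conv-d}, with $m\le\mstar$ and $z$ bounded by $\mstar+\mstar^{1/3}\tstar\Col$, only finitely many $(n,m)$ are involved. There the plain continuity $f_m=O(\deltaem)$ on a compact set, together with the simplicity of the Bessel roots, gives the claim.

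I expect the main obstacle to be the correct index matching, i.e.\ showing that the $n$-th perturbed root corresponds to the $n$-th unperturbed one uniformly in $m$. I would handle it by a global root count. Using the lower bound $|\besselj_m|\gtrsim$ amplitude away from the Hethcote windows (from the zone-wise asymptotics), $\tilde\besselj_m$ has exactly one root per window and none in between. This also requires the small-$z$ region $z<m$, where $\besselj_m$ is positive and dominates $\deltaem\max\{|\besselj_m|,|\bessely_m|\}$ only if $\bessely_m$ does not blow up too much. That is the delicate point, and I would try to refine the bound on $f_m$ there by reusing the structure of the proof of \Cref{lem:det}.
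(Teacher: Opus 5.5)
Your overall architecture is the paper's. You write the characteristic function as $\besselj_m+f_m$ via \Cref{lem:det}, and you are right that in the Neumann case the leading term must be $\besselj_m'$. You apply \Cref{lem:Hethcote} for $m=0$ with the large-$z$ asymptotics, in the oscillatory zone in the phase variable $\phaseB$ around $\rasy_{n,m}$, and in the transition zone in the Airy scaling. The finitely many remaining $(n,m)$ go to classical perturbation theory.

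\textbf{The counting step (genuine gap).} \Cref{lem:Hethcote} together with the endpoint sign pattern only gives \emph{existence} of a perturbed root in each window, in fact an odd number of them. \Cref{lem:det} controls $f_m$ but not $f_m'$, so ``exactly one root per window'' does not follow. The same objection applies to your argument for \eqref{eq:ev-conv-d}: $C^0$-smallness of $f_m$ near a simple zero does not exclude several nearby zeros.

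The paper supplies the missing idea through continuity of eigenvalues in the coefficients. Consider the path $\epsilon_s:=\epsilon_0+s(\epsilon-\epsilon_0)$ (resp.\ $\mu_s$ in the Neumann case), $s\in[0,1]$. Its deviation from $\epsilon_0$ is at most $\deltaem$, so all window estimates hold with the same constants along it. Hence no eigenvalue can cross a window boundary or appear between windows. At $s=0$ each window contains exactly one simple root, so the count stays one for every $s$, in particular for $s=1$. For the finitely many low eigenvalues the same continuity gives existence, uniqueness and the absence of an index shift at once.

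\textbf{The region $z\le m$, and $z\to0$ for $m=0$.} You leave this open. You are right that \Cref{lem:det} is useless there because $\bessely_m$ dominates, but no refinement of \Cref{lem:det} is needed. For an eigenpair of \eqref{eq:evps-scalar-wm} one has $|\gradg w_m|^2=|\partial_r w_m|^2+m^2r^{-2}|w_m|^2$. Testing with $w_m$ on the unit disk therefore gives $\nu^2\spl\epsilon w_m,w_m\spr_\domcs=\spl\epsilon\gradg w_m,\gradg w_m\spr_\domcs>m^2\spl\epsilon w_m,w_m\spr_\domcs$ for $m\neq0$. So $\nu>m$ for every admissible $\epsilon$. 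The same holds for \eqref{eq:evps-scalar-vm} with $\mu$. For $m=0$, the min-max principle with $\epsilon_0/2\le\epsilon\le2\epsilon_0$ gives a uniform positive lower bound.

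Consequently only $t\ge0$ matters in the transition zone. The paper's third case works on $[0,\tstar\Col]$ and uses this implicitly. With both ingredients in place, your global count goes through.

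\textbf{A smaller error.} Your parenthetical that the interlaced indexing matches $\rasy_{n,m}$ is not right. By \eqref{eq:KrasikovJpbound} the Neumann zeros sit at phases $\omshift+k\pi$, which differ from the $\rasy_{n,m}$ by $\pi/2$. So \eqref{eq:ev-conv-b} must be read separately for Dirichlet and Neumann roots, with reference values shifted by $\pi/2$ in the Neumann case.
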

\begin{proof}
We only present the proof for the Dirichlet eigenvalues $\tilde\besseljz_{n,m}$.
Our main approach to obtain uniform estimates is to apply \Cref{lem:Hethcote} to treat all but a finite number of eigenvalues.
We derive the asymptotic values $\rasy_{n,m}$, bound $|\besseljz_{n,m}-\rasy_{n,m}|$ and $|\tilde\besseljz_{n,m}-\rasy_{n,m}|$ with the same technique and occasionally estimate $|\besseljz_{n,m}-\tilde\besseljz_{n,m}|\leq |\besseljz_{n,m}-\rasy_{n,m}|+|\tilde\besseljz_{n,m}-\rasy_{n,m}|$.
The uniform convergence of the remaining eigenvalues then follows due to their finite number.\\
\textit{1.\ case: $m=0$.}\quad 
Recall the existence of $C_0>0$ such that $|\besselj_0(z)-\big(\frac{2}{\pi z}\big)^{1/2}\cos(z-\bbomega_0)|\leq \frac{C_0}{z}$ for all $z>1$ (which follows, e.g., from \cite[Eq.~10.17.13]{NISTDLMF}).
We apply \Cref{lem:Hethcote} simultaneously with
\begin{align*}
\psi(z)&=\sqrt{2/\pi}\cos(z-\bbomega_0),&
\tilde\psi(z)&=\psi(z),\\
\tau(z)&=z^{1/2} \besselj_0(z) -\psi(z),&
\tilde\tau(z)&=z^{1/2} \besselj_0(z) -\psi(z) +z^{1/2} f^\mathrm{Dir}_0(z),\\
b&=\rasy_{n,0},\,\, n\in\Nz,&
%\bbomega_0+\pi/2+\pi n, n\in\Nz,&
\tilde b&=b,\\
s&\in (0,\min\{\cgap/2, \pi/8\}),&
\tilde s&=s,
\end{align*}
where $f_0^\mathrm{Dir}$ is as in \Cref{lem:det} and $\cgap$ is as in \Cref{prop:cdistbes}.
Note that $|\psi(\rasy_{n,0}\pm s)|=\cos(\pi/2+s)$ and $|\psi'(x)|\geq \sin(\pi/2+s)$, $x\in [\rasy_{n,0}-s,\rasy_{n,0}+s]$.
Furthermore
\begin{align*}
|\tau(z)|, |\tilde\tau(z)| \leq \frac{C_0}{z^{1/2}}
+\Cdet\deltaem \sup_{t>1}\max\{|t^{1/2}\besselj_0(t)|,|t^{1/2}\bessely_0(t)|\}
\quad\text{for all } z>1.
\end{align*}
Let $z_0>1$, $\delta_1>0$ be such that
\begin{align*}
\bigg(\frac{C_0}{z_0^{1/2}}
+\Cdet\delta_1 \sup_{t>1}\max\{|t^{1/2}\besselj_0(t)|,|t^{1/2}\bessely_0(t)|\}\bigg)
&<\cos(\pi/2+s),\\
\bigg(\frac{C_0}{z_0^{1/2}}
+\Cdet\delta_1 \sup_{t>1}\max\{|t^{1/2}\besselj_0(t)|,|t^{1/2}\bessely_0(t)|\}\bigg)
&<\frac{\sin(\pi/2+s)}{4}\eta,
\end{align*}
and $\deltaem<\delta_1$ henceforth.
Let $\nstar:=\min\{n\in\setN\colon \rasy_{n,0}\geq z_0+s\}$.
Then for each $n\geq\nstar$ \Cref{lem:Hethcote} yields the existence of
a root $\jtmp_{n,0} \in [\rasy_{n,0}-\eta/4,\rasy_{n,0}+\eta/4]$ of $\besselj_{n,0}$
and a root $\jttmp_{n,0} \in [\rasy_{n,0}-\eta/4,\rasy_{n,0}+\eta/4]$ of $\tilde\besselj_{n,0}$.
It also follows that $\besselj_{n,0}, \tilde\besselj_{n,0}$ have no roots in
$\{z\in\Rp\colon z\geq z_0+s\}\setminus \bigcup_{n\geq\nstar} [\rasy_{n,0}-s,\rasy_{n,0}+s]$.
Due to \Cref{prop:cdistbes} $\jtmp_{n,0}$ is the only root of $\besselj_{n,0}$ in $[\rasy_{n,0}-s,\rasy_{n,0}+s]$, $n\geq\nstar$.
(This could alternatively be reasoned with the monotonicity of $\besselj_{n,0}$ in $[\rasy_{n,0}-s,\rasy_{n,0}+s]$ obtained by an asymptotic approximation of $\besselj'_{n,0}$.)
Assume that $\tilde\besselj_{n,0}$ admits more than one root in $[\rasy_{n,0}-s,\rasy_{n,0}+s]$.
Since these roots are eigenvalues they are continuous with respect to $\epsilon\in L^\infty$ (see, e.g., \cite[Thm.~2,Thm.~3]{Karma96a} or \cite{Kato95}) and hence remain in $[\rasy_{n,0}-s,\rasy_{n,0}+s]$ as $\deltaem\to0$.
Note that the eigenvalues $\tilde\besseljz_{n,0}$ of [\eqref{eq:evps-scalar-m}, $m=0$] converge locally to the eigenvalues $\besseljz_{n,0}$ of [\eqref{eq:evps-scalar-m}, $m=0$, $\epsilon=\epsilon_0$] as $\deltaem\to0$.
Thus as $\deltaem\to0$ the only possible limit for roots of $\tilde\besselj_{n,0}$ in $[\rasy_{n,0}-s,\rasy_{n,0}+s]$ is $\jtmp_{n,0}$.
However, this would lead to a contradiction on the multiplicity of $\jtmp_{n,0}$.
Hence $\jttmp_{n,0}$ is the only root of $\tilde\besselj_{n,0}$ in $[\rasy_{n,0}-s,\rasy_{n,0}+s]$, $n\geq\nstar$.
Let $\nshift:=\#\{\besseljz_{n,0}\colon n\in\setN, \besseljz_{n,0}<z_0+s\}$.
Classical eigenvalue perturbation theory yields the existence of $\delta_2\in(0,\delta_1]$ such that if $\deltaem<\delta_2$, then [\eqref{eq:evps-scalar-m}, $m=0$] admits exactly $\nshift$ eigenvalues in $(0,z_0+s)$.
Henceforth let $\deltaem<\delta_2$.
The known asymptotic behavior of $\besseljz_{n,0}$ \cite[Eq.~10.21.19,10.21.20]{NISTDLMF} then yields that $\jtmp_{n,0}=\besseljz_{n,0}$ and $\jttmp_{n,0}=\tilde\besseljz_{n,0}$ for $n\geq\nstar$, i.e., there is no index shift in $n$.
The triangle inequality yields that $\besseljz_{n,0}-\tilde\besseljz_{n,0}<\eta/2$ for $n\geq\nstar$.
Now classical eigenvalue perturbation theory allows us to choose $\delta_3\in(0,\delta_2]$ such that $|\besseljz_{n,0}-\tilde\besseljz_{n,0}|<\eta/2$ for all $n=1,\dots,\nshift=\nstar-1$, $\deltaem<\delta_3$.
Hence the proof of the first case is finished.
\\
\textit{2.\ case: $n\in\setN,m\in\setN\colon \besseljz_{n,m} > m+m^{1/3}\tstar$.}\quad
We repeat the approach used for the previous case.
We apply \Cref{lem:Hethcote} simultaneously with
\begin{align*}
\psi(x)&=\tilde\psi(x)=\sqrt{2/\pi}\cos(x-\omshift),\\
\tau(x)&=\big((\phaseB^{-1}(x))^2-\mushift\big)^{1/4} \besselj_m\circ\phaseB^{-1}(x)
-\psi(x),\\
\tilde\tau(x)&=\big((\phaseB^{-1}(x))^2-\mushift\big)^{1/4} \besselj_m\circ\phaseB^{-1}(x)
-\psi(x)
+\big((\phaseB^{-1}(x))^2-\mushift\big)^{1/4} f^\mathrm{Dir}_m\circ\phaseB^{-1}(x),\\
b&=\tilde b=\rasy_{n,m},\,\, n\in\setN,\\
%&=\tilde b_{n,m}=\omshift+\pi/2+\pi n,\,\, n\in\setN,\\
s&=\tilde s=\frac{\pi}{8}.
\end{align*}
\Cref{lem:besselOscZoneII,lem:det} yield that
\begin{align*}
|\tau(x)|,|\tilde\tau(x)| \leq 
\frac{13}{12\sqrt{2\pi}\tstari^{3/2}}(1+\Cdet\deltaem)+\sqrt{\frac{2}{\pi}}\Cdet\deltaem
\quad\text{for all } x>m+m^{1/3}\tstari,\,\, \tstari>1.
\end{align*}
We choose $\tstari>1$ and $\delta_4\in(0,\delta_3]$ such that
\begin{align*}
\frac{13}{12\sqrt{2\pi}\tstari^{3/2}}(1+\Cdet\delta_4)+\sqrt{\frac{2}{\pi}}\Cdet\delta_4
&<\cos(\pi5/8),\\
\frac{13}{12\sqrt{2\pi}\tstari^{3/2}}(1+\Cdet\delta_4)+\sqrt{\frac{2}{\pi}}\Cdet\delta_4
&<\frac{\sin(\pi5/8)\eta}{2},
\end{align*}
and assume $\deltaem<\delta_4$ henceforth.
Let $\nstar>0$ be such that $\rasy_{\nstar,m}-s \geq \phaseB(m+m^{1/3}\tstari)$.
Thence for each $n\geq\nstar$ \Cref{lem:Hethcote} yields that $\besselj_m\circ\phaseB^{-1}$ and $\tilde\besselj_m\circ\phaseB^{-1}$ admit roots $\xtmp_{n,m},\xttmp_{n,m}\in [\rasy_{n,m}-s,\rasy_{n,m}+s]$ which satisfy $|\rasy_{n,m}-\xtmp_{n,m}|, |\rasy_{n,m}-\xttmp_{n,m}|<\eta$.
It also follows that $\besselj_m\circ\phaseB^{-1}, \tilde\besselj_m\circ\phaseB^{-1}$ have no roots in
$(\rasy_{\nstar,m}-s,+\infty)\setminus \bigcup_{n\geq\nstar} [\rasy_{n,m}-s,\rasy_{n,m}+s]$.
It can be deduced with \Cref{lem:besselOscZoneII}
that $\besselj_m\circ\phaseB^{-1}$ is monotone on $[\rasy_{n,m}-s,\rasy_{n,m}+s]$ and hence $\xtmp_{n,m}$ is the only root of $\besselj_m\circ\phaseB^{-1}$ in $[\rasy_{n,m}-s,\rasy_{n,m}+s]$.
(For the Neumann case we can use \eqref{eq:JYppp} and have to adjust $\tstar,\delta_4$.)
As applied in the previous case, a continuity and convergence argument for $\deltaem\to0$ shows that also $\xttmp_{n,m}$ is the only root of $\tilde\besselj_m\circ\phaseB^{-1}$ in $[\rasy_{n,m}-s,\rasy_{n,m}+s]$.
Thus all roots of $\besselj_m\circ\phaseB^{-1}$, $\tilde\besselj_m\circ\phaseB^{-1}$ in $(\rasy_{\nstar,m}-s,+\infty)$ are given by $\xtmp_{n,m}$ and $\xttmp_{n,m}$, $n\geq\nstar$ respectively.
The known asymptotic behavior of $\besseljz_{n,m}$ \cite[Eq.~10.21.19,10.21.20]{NISTDLMF} then yields that $\xtmp_{n,m}=\phaseB(\besseljz_{n,m})$ and $\xttmp_{n,m}=\phaseB(\tilde\besseljz_{n,m})$ for $n\geq\nstar$, i.e., there is no index shift in $n$.
It remains to compute that
\begin{align*}
\phaseB^{-1}(\rasy_{\nstar,m}-s)&=
\phaseB^{-1}(\rasy_{\nstar,m}-s)-\phaseB^{-1}\circ\phaseB(m+m^{1/3}\tstari)+m+m^{1/3}\tstari\\
&=\int_{\phaseB(m+m^{1/3}\tstari)}^{\rasy_{\nstar,m}-s} (\phaseB^{-1})'(z) \,dz +m+m^{1/3}\tstari\\
&\leq\int_{\phaseB(m+m^{1/3}\tstari)}^{\phaseB(m+m^{1/3}\tstari)+\pi} \frac{1}{\phaseB'\circ\phaseB^{-1}(z)} \,dz +m+m^{1/3}\tstari\\
&\leq \frac{\pi}{\phaseB'(m+m^{1/3}\tstari)} +m+m^{1/3}\tstari\\
&\leq \frac{\pi(m+m^{1/3}\tstari)}{m^{2/3}\tstari^{1/2}} +m+m^{1/3}\tstari
\leq 2\pi\tstari^{1/2}m^{1/3} +m+m^{1/3}\tstari.
\end{align*}
Hence the claim for the second case follows with $\tstar:=\tstari+2\pi\tstari^{1/2}$.\\
\textit{3.\ case: $n\in\setN,m>\mstar\colon \besseljz_{n,m} < m+m^{1/3}\tstar\Col$.}\quad
We repeat the approach used for the previous case.
Let $\nstar:=\max\{n\in\setN\colon \Aiz_n <\tstar\Col\}$,
$\hat n_*:=\max\{n\in\setN\colon \Aizc_n \leq\tstar\Col\}$,
and
$\cgapai:=\min\{\Aizc_1,\min\{\Aizc_{n+1}-\Aizc_n\colon n=1,\dots,\hat n_*-1\}\}$.
We apply \Cref{lem:Hethcote} simultaneously with
\begin{align*}
\psi(t)&= \tilde\psi(t)=2^{1/3}\Ai(-2^{1/3}t),\\
\tau(t)&= m^{1/3} \besselj_m(m+m^{1/3}t) -\psi(x),\\
\tilde\tau(x)&= m^{1/3} \besselj_m(m+m^{1/3}t) -\psi(x)
+m^{1/3} f^\mathrm{Dir}_m(m+m^{1/3}t),\\
b&=\tilde b=2^{-1/3}\Aiz_n,\,\, n=1,\dots,\nstar,\\
s&=\tilde s=\min\Big\{ \frac{\cgapai}{2^{1/3}4}, \frac{\tstar\Col-\Aiz_{\nstar}}{2^{1/3}}\Big\}.
\end{align*}
\Cref{lem:besselTransZone,lem:det} yield that
\begin{align*}
|\tau(t)|,|\tilde\tau(t)| \leq 
\frac{C_{[0,\tstar\Col]}}{m^{2/3}}
+\Cdet\deltaem \sup_{\tilde t\in[0,\tstar\Col]} \max \{|2^{1/3}\Ai(-2^{1/3}\tilde t)|,|2^{1/3}\Bi(-2^{1/3}\tilde t)|\}
\,\,\forall t\in [0,\tstar\Col].
\end{align*}
We choose $\mstar>1$ and $\tilde\delta_1>0$ such that
\begin{align*}
\frac{C_{[0,\tstar\Col]}}{m^{2/3}}
&<\min\{\psi(2^{-1/3}\Aiz_n\pm s)\colon n=1,\dots,\nstar\},\\
\frac{C_{[0,\tstar\Col]}}{m^{2/3}}
&<\frac{\tilde\eta}{2} \min\{\psi'(t)\colon t\in[2^{-1/3}\Aiz_n-s,2^{-1/3}\Aiz_n+s]\colon n=1,\dots,\nstar\}.
\end{align*}
and assume $m>\mstar$, $\deltaem<\tilde\delta_1$ henceforth.
Thence for each $n\leq\nstar$ \Cref{lem:Hethcote} yields that $\psi+\tau$ and $\tilde\psi+\tilde\tau$ admit roots $\troot_{n,m},\ttroot_{n,m}\in [2^{-1/3}\Aiz_n-s,2^{-1/3}\Aiz_n+s]$ which satisfy $|2^{-1/3}\Aiz_n-\troot_{n,m}|,|2^{-1/3}\Aiz_n-\ttroot_{n,m}|<\tilde\eta/2$.
By arguing the same way as in the previous cases we obtain that $\troot_{n,m}$ and $\ttroot_{n,m}$, $n=1,\dots,\nstar$ are the only roots of $\psi+\tau$ and $\tilde\psi+\tilde\tau$ in $[0,\tstar\Col]$.
The claim follows now from $\besseljz_{n,m}=m+m^{1/3}\troot_{n,m}$ and $\tilde\besseljz_{n,m}=m+m^{1/3}\ttroot_{n,m}$ for $n=1,\dots,\nstar$.
\\
\textit{4.\ case: $n\in\setN,m\leq\mstar\colon \besseljz_{n,m} \leq m+m^{1/3}\tstar$.}\quad
The index set for $(n,m)$ is finite and thus conventional eigenvalue perturbation theory yields the existence of $\tilde\delta_2\in(0,\tilde\delta_1]$ such that if $\deltaem<\tilde\delta_2$, then $|\besseljz_{n,m}-\tilde\besseljz_{n,m}|<\tilde\eta/2$.
\end{proof}
\begin{proposition}\label{prop:evgap}
There exist constants $\cgapt,\delta_0>0$ such that if $\deltaem<\delta_0$, then
\begin{align*}
|\evsac_{1,m}|\geq\cgapt,\quad
|\evsac_{n+1,m}-\evsac_{n,m}|\geq\cgapt,\quad
&\text{for all}\quad \delta\in(0,\delta_0),\indn\in\setN,\indm\in\Nz.
\end{align*}
\end{proposition}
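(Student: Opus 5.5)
We split the pairs $(n,m)$ into the same regimes as in \Cref{lem:ev-conv} and in each regime transfer the gap of the homogeneous sequence $(\besseljzc_{n,m})_n$ from \Cref{prop:cdistbes} (or of the Airy roots $(\Aizc_n)_n$) to the perturbed sequence $(\evsac_{n,m})_n$. The guiding observation is this: whenever $\evsac_{n,m}$ lies within a distance $\eta/2$ of $\besseljzc_{n,m}$ in some coordinate, uniformly in $n$, the triangle inequality gives a perturbed gap of at least the homogeneous gap minus $\eta$ in that coordinate. One then converts back to the $z$-variable using a bound on the derivative of the coordinate change. Throughout we rely on the index matching in \Cref{lem:ev-conv}: $\evsac_{n,m}$ pairs with $\besseljzc_{n,m}$, with no index shift, which the lemma's proof establishes.

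\emph{Case $m=0$.} Choose $\eta=\cgap/2$ in \eqref{eq:ev-conv-a}. Then $\evsac_{1,0}\geq\cgap-\eta/2$ and $\evsac_{n+1,0}-\evsac_{n,0}\geq\cgap-\eta=\cgap/2$.

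\emph{Case $m\in\setN$ with $\besseljzc_{n,m}>m+m^{1/3}\tstar$ (oscillatory zone).} In the phase variable the homogeneous roots sit within $\eta/2$ of $\rasy_{n,m}$. The Dirichlet and Neumann roots interlace, with asymptotic phases spaced by $\pi/2$; for the compound sequence we take the asymptotic values $\rasy_{n,m}$ and $\rasy_{n,m}\pm\pi/2$ accordingly. By \eqref{eq:ev-conv-b} with $\eta=\pi/8$, consecutive perturbed roots therefore satisfy $\phaseB(\evsac_{n+1,m})-\phaseB(\evsac_{n,m})\geq\pi/2-\pi/4$. Since $0<\phaseB'(r)=\sqrt{r^2-\mushift}/r\leq1$, the mean value theorem gives $\evsac_{n+1,m}-\evsac_{n,m}\geq\pi/4$. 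The \emph{transition pair} is the first root beyond the threshold together with its predecessor, which lies below the threshold. To handle it, use the overlap parameter $\Col>1$ so that it is covered by the next case.

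\emph{Case $m>\mstar$ with $\besseljzc_{n,m}<m+m^{1/3}\tstar\Col$ (transition zone).} By \eqref{eq:ev-conv-c} we get
\[
\evsac_{n+1,m}-\evsac_{n,m}\geq m^{1/3}2^{-1/3}(\Aizc_{n+1}-\Aizc_n-2^{1/3}\tilde\eta).
\]
Choosing $\tilde\eta$ below half of the minimal Airy gap among the finitely many $\Aizc_n\leq\tstar\Col$ makes this at least a fixed positive constant times $m^{1/3}\geq1$. The first root is bounded below because $\evsac_{1,m}\geq m-m^{1/3}\tilde\eta$.

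\emph{Case $m\leq\mstar$ with roots below the threshold.} Apply \eqref{eq:ev-conv-d} with $\tilde\eta<\cgap/2$ and use \Cref{prop:cdistbes}.

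Finally take $\cgapt$ as the minimum of the finitely many constants obtained and $\delta_0$ as the minimum of the corresponding thresholds. The order of choices matters: first fix $\tstar$ from the oscillatory case, then $\Col$ and $\tilde\eta$, then $\mstar$.

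The main obstacle is the bookkeeping at the regime boundaries. A consecutive pair $\evsac_{n,m},\evsac_{n+1,m}$ may straddle the threshold $m+m^{1/3}\tstar$; one element may also be controlled only in the perturbed sense while the regime is decided by the unperturbed $\besseljzc$. The plan is to choose $\Col\geq2$, so that the transition-zone estimate already covers every pair whose lower member is below $m+m^{1/3}\tstar$. The spacing of oscillatory roots is at most $\pi/\phaseB'(m+m^{1/3}\tstar)\lesssim m^{1/3}$, a computation done at the end of case 2 of \Cref{lem:ev-conv}. Hence the upper member of such a pair lies below $m+m^{1/3}\tstar\Col$ once $\Col$ is large enough, and the same $\Col$ must work uniformly in $m$.
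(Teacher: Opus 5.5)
Your proposal is correct and follows essentially the paper's proof. Both use the same regime splitting from \Cref{lem:ev-conv}, the triangle inequality in the phase variable (converted back to $z$ via $0<\phaseB'\leq 1$) and in the rescaled Airy variable, \Cref{prop:cdistbes} for the finitely many small $m$, and an overlap constant $\Col$ chosen uniformly in $m$ so that every consecutive pair straddling $m+m^{1/3}\tstar$ falls under the transition-zone estimates. You are in fact slightly more careful than the paper, which uses phase spacing $\pi$: you note that the compound sequence $(\evsac_{n,m})_n$ has phase spacing $\pi/2$. This only changes the constant, since the paper's $\eta\leq\cgap/2\leq 1/2$ still leaves a positive gap $\pi/2-\eta$.
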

\begin{proof}
The claim essentially follows from \Cref{lem:ev-conv} and the triangle inequality.
Let $\eta:=\min\{\frac{\cgap}{2},\frac{\pi}{2}\}$.
The first part of \Cref{lem:ev-conv} allows us to choose $\tstar>1$ and $\delta_1>0$ such that if $\deltaem<\delta_1$, then \eqref{eq:ev-conv-a}, \eqref{eq:ev-conv-b} hold.
Let $\deltaem<\delta_1$ henceforth.
Thus we estimate
\begin{align*}
|\evsac_{1,0}|
\geq |\besseljzc_{1,0}|-|\besseljzc_{1,0}-\evsac_{1,0}|
\geq \cgap-\eta/2 \geq \cgap/2,
\end{align*}
and
\begin{align*}
|\evsac_{n+1,0}-\evsac_{n,0}|
\geq 
|\besseljzc_{n+1,0}-\besseljzc_{n,0}|
-|\besseljzc_{n+1,0}-\evsac_{n+1,0}|
-|\besseljzc_{n,0}-\evsac_{n,0}|
\geq \cgap-\eta \geq \cgap/2.
\end{align*}
Furthermore,
\begin{align*}
\evsac_{n+1}-\evsac_{n,m}
&\geq \int^{\evsac_{n+1,m}}_{\evsac_{n,m}} \phaseB'(z)\,dz
= \phaseB(\evsac_{n+1,m})-\phaseB(\evsac_{n,m})\\
&\geq |\rasy_{n+1,m}-\rasy_{n,m}|-|\rasy_{n+1,m}
-\phaseB(\besseljzc_{n+1,m})|
-|\rasy_{n,m}-\phaseB(\besseljzc_{n,m})|
\geq \pi-\eta \geq \pi/2
\end{align*}
for all $n,m\in\setN$ such that $\besseljzc_{n,m}>m+m^{1/3}\tstar$.
Due to \cite[Eqs.~(9.9.6),(9.9.8)]{NISTDLMF} we can choose $\Col>1$ such that for each $m\in\setN$ there exists $n_+\in\setN$ with $m+m^{1/3}\tstar<\besseljzc_{n+,m}<m+m^{1/3}\tstar\Col$.
Let $\cgapai>0$ be defined as in the proof (3.\ case) of \Cref{lem:ev-conv} and $\tilde\eta:=\min\{2^{-1/3}\cgapai/2, \cgap/2\}$.
With $\tstar$ remaining it's role the second part of \Cref{lem:ev-conv} allows us to choose $\mstar>1$ and $\delta_2>0$ such that if $\deltaem<\delta_v$, then \eqref{eq:ev-conv-c}, \eqref{eq:ev-conv-d} hold.
Let $\deltaem<\delta_v$ henceforth.
We estimate that
\begin{align*}
\evsac_{1,m}
&\geq 2^{-1/3}m^{1/3}|\Aizc_{1}|
-m^{1/3}\big|2^{-1/3}\Aizc_{1}-\frac{\evsac_{1,m}-m}{m^{1/3}}\big|\\
&\geq (2^{-1/3}\cgapai-\tilde\eta) m^{1/3}
\geq 2^{-1/3}\cgapai/2,
\end{align*}
and
\begin{align*}
\evsac_{n+1,m}&-\evsac_{n,m}\\
&\geq 2^{-1/3}m^{1/3}|\Aizc_{n+1}-\Aizc_{n}|
-m^{1/3}\big|2^{-1/3}\Aizc_{n+1}-\frac{\evsac_{n+1,m}-m}{m^{1/3}}\big|
-m^{1/3}\big|2^{-1/3}\Aizc_{n}-\frac{\evsac_{n,m}-m}{m^{1/3}}\big|\\
&\geq (2^{-1/3}\cgapai-\tilde\eta) m^{1/3}
\geq 2^{-1/3}\cgapai/2
\end{align*}
for all $n\in\setN,m>\mstar$ such that $\besseljzc_{n,m}<m+m^{1/3}\tstar\Col$.
Since for each $m>\mstar$ there exists $n_+\in\setN$ with $m+m^{1/3}\tstar<\besseljzc_{n+,m}<m+m^{1/3}\tstar\Col$ it follows that
\begin{align*}
|\evsac_{1,m}|, |\evsac_{n+1,m}-\evsac_{n,m}| \geq \min\{\pi/2, 2^{-1/3}\cgapai/2\}
\quad\text{for all } n\in\setN,m>\mstar.
\end{align*}
It remains to note that
\begin{align*}
|\evsac_{1,m}|
\geq |\besseljzc_{1,m}|-|\besseljzc_{1,m}-\evsac_{1,m}|
\geq \cgap-\eta/2 \geq \cgap/2,
\end{align*}
and
\begin{align*}
|\evsac_{n+1,m}-\evsac_{n,m}|
\geq 
|\besseljzc_{n+1,m}-\besseljzc_{n,m}|
-|\besseljzc_{n+1,m}-\evsac_{n+1,m}|
-|\besseljzc_{n,m}-\evsac_{n,m}|
\geq \cgap-\eta \geq \cgap/2
\end{align*}
for all $n\in\setN,m\leq\mstar$ such that $\besseljzc_{n,m}<m+m^{1/3}\tstar\Col$.
The claim follows now with $\delta_0:=\min\{\delta_1,\delta_2\}$ and
$\cgapt:=\min\{\cgap/2,\pi/2, 2^{-1/3}\cgapai/2\}$.
\end{proof}

\subsection{Traces of eigenfunctions}\label{subsec:traceofefs}

\begin{lemma}\label{lem:vector}
There exist $\Cvec,\delta_0>0$ such that if $\deltaem<\delta_0$ and $(\nu,\wvec)$, $\nu>0$ is a nontrivial solution to $\AmatDir_{\indL}(\nu)\wvec=0$ or $(\nu,\vvec)$, $\nu>0$ is a nontrivial solution to $\AmatNeu_{\indL}(\nu)\vvec=0$, then $a_{\indL}\neq0$ and after normalization of $\wvec$ or $\vvec$ respectively with $a_{\indL}:=1$ it holds that
$|a_1-1|,|(a_l,b_l)-(1,0)|\leq \Cvec \deltaem$ for all $l=2,\dots,\indL$.
\end{lemma}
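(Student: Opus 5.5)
We propagate the transmission conditions outward from the innermost layer rather than working with the determinant directly. Treat the Dirichlet case; the Neumann case is identical, with $\mu$ in place of $\epsilon$. Write the solution on layer $l$ as $u_l(r)=a_l\besselj_m(\nu r)+b_l\bessely_m(\nu r)$, with $b_1=0$. At each interface $r_{l+1}$, $l=1,\dots,\indL-1$, the two rows of $\AmatDir_{\indL}(\nu)$ are continuity of $u$ and of $\epsilon u'$. Solving this $2\times2$ system with the Wronskian \eqref{eq:Wronskian}, the coefficients on layer $l+1$ are determined uniquely by those on layer $l$:
\begin{align*}
a_{l+1}&=a_l+\Big(1-\tfrac{\epsilon_l}{\epsilon_{l+1}}\Big)\tfrac{\pi z}{2}\,\bessely_m(z)\big(a_l\besselj_m'(z)+b_l\bessely_m'(z)\big),\\
b_{l+1}&=b_l-\Big(1-\tfrac{\epsilon_l}{\epsilon_{l+1}}\Big)\tfrac{\pi z}{2}\,\besselj_m(z)\big(a_l\besselj_m'(z)+b_l\bessely_m'(z)\big),
\end{align*}
where $z=\nu r_{l+1}$. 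Since $|1-\epsilon_l/\epsilon_{l+1}|\le 4\epsilon_0^{-1}\deltaem$, each transfer step equals the identity plus $O(\deltaem)$ times the matrix of products $z\bessely_m\besselj_m'$, $z\bessely_m\bessely_m'$, $z\besselj_m\besselj_m'$, $z\besselj_m\bessely_m'$.

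\Cref{lem:Crjy} bounds $z\besselj_m'\bessely_m$ and \Cref{lem:Crjj} bounds $z\besselj_m'\besselj_m$, uniformly in $z>0$ and $m$. The Wronskian turns $z\besselj_m\bessely_m'$ into $z\besselj_m'\bessely_m+2/\pi$, so it is bounded as well. The difficulty is the term $z\bessely_m\bessely_m'$, which blows up for $z<m$. I would avoid it as follows. Start from $(a_1,b_1)=(1,0)$, i.e.\ normalize $a_1=1$ first (this is legitimate, since $a_1=0$ forces $u\equiv0$ by the uniqueness of the recursion). Then prove inductively that $|(a_l,b_l)-(1,0)|\le C\deltaem$, and additionally that $|b_l|$ is small enough relative to the growth of $\bessely_m$ in the regime $z<m$. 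Concretely, the quantity to control is not $b_l$ itself but $b_l\,z\bessely_m'(z)\bessely_m(z)$ at the next interface. Here $b_l$ was itself generated at earlier radii by terms of the form $\besselj_m(\nu r_k)\besselj_m'(\nu r_k)\cdot\nu r_k$. For $q=r_k/r_{l+1}\in[p r_2,1]$, \Cref{lem:Crrjjyy} exactly bounds the product $z^2\besselj_m(qz)\besselj_m'(qz)\bessely_m(z)\bessely_m'(z)$. So I expand $b_l$ as a sum of products, keeping the leading $\deltaem$ term explicit. Each cross term $b_l\,z\bessely_m\bessely_m'$ then becomes a product controlled by \Cref{lem:Crrjjyy} (plus higher-order terms handled in the same way), giving $|(a_{l+1},b_{l+1})-(a_l,b_l)|\le C\deltaem$. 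After $\indL-1$ steps, with constants depending only on $\indL$ and $r_l$, we get $|(a_l,b_l)-(1,0)|\le C\deltaem$ for all $l$.

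With $a_1=1$ we then have $|a_{\indL}-1|\le C\deltaem$, so for $\deltaem<\delta_0:=1/(2C)$ we get $a_{\indL}\neq0$. Renormalizing by $a_{\indL}$ changes all coefficients by a factor $1+O(\deltaem)$, which yields the stated bounds with some $\Cvec$.

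The main obstacle is the uniform control of the $\bessely_m\bessely_m'$ terms in the evanescent zone $z<m$, where $\bessely_m$ is exponentially large. The plan is to track $b_l$ as an explicit combination of $\besselj_m\besselj_m'$ products at smaller radii, which is precisely the structure \Cref{lem:Crrjjyy} (and the auxiliary $\BmatDir$ bound in \Cref{lem:det}) was designed for. If this direct bookkeeping becomes unwieldy, I would instead read off $(a_l,b_l)$ via Cramer's rule as ratios of minors. These minors are exactly of the form $\det\tilde\Amat$ and $\det\tilde\Bmat$ for the sub-configurations, and I would bound them using the estimates of \Cref{lem:det}.
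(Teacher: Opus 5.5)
Your argument is correct, and it runs in the opposite direction from the paper's proof.

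\paragraph{Comparison with the paper.} The paper starts at the outer boundary. At the root $\nu$ it uses \Cref{lem:det} to get $|\bessely_m(\nu)|\ge|\besselj_m(\nu)|$, hence $|b_{\indL}|\le\Cdet\deltaem$ after setting $a_{\indL}=1$. It then inverts the interface systems to move inward. Finally it bounds $a_1$ by a separate analysis of the first two rows in three regimes: transition zone, oscillatory zone, and $\nu r_2\le m$.

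You instead normalize $a_1=1$, $b_1=0$ and push outward with (correct) transfer formulas. This buys several things:
\begin{itemize}
\item You never use the outer boundary row or \Cref{lem:det}, so the bound is a property of the interface conditions alone, valid for every $\nu>0$.
\item The Dirichlet and Neumann cases become literally identical.
\item $a_{\indL}\neq0$ and the bound on $a_1$ come for free by renormalization.
\end{itemize}

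The direction is also what makes the term $b\,z\bessely_m(z)\bessely_m'(z)$ tractable. Inward, the update of $a_l$ contains $b_{l+1}\,z\bessely_m(z)\bessely_m'(z)$ at $z=\nu r_{l+1}$. In the evanescent zone this cannot be controlled by a bound $|b_{l+1}|\le C\deltaem$ alone, since $z\bessely_m\bessely_m'$ is exponentially large there; the paper's displayed inversion formula in fact omits this term. Outward, the same term pairs $\besselj_m\besselj_m'$ at inner radii with $\bessely_m\bessely_m'$ at outer radii, which is exactly the structure \Cref{lem:Crrjjyy} bounds.

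\paragraph{Making the bookkeeping precise.} You should control $b_l\,z\bessely_m\bessely_m'$ not only at the next interface but at every outer interface, because $b_l$ is inherited by all later $b$'s. So carry
\[
\beta_{l,s}:=b_l\,z_s\bessely_m(z_s)\bessely_m'(z_s),\qquad z_s:=\nu r_s,\qquad s>l.
\]
With $\rho:=\epsilon_{l-1}/\epsilon_l$, your recursion gives
\[
\beta_{l,s}=\beta_{l-1,s}\Big(1-(1-\rho)\tfrac{\pi}{2}z_l\besselj_m(z_l)\bessely_m'(z_l)\Big)-(1-\rho)\tfrac{\pi}{2}\,a_{l-1}\,z_lz_s\besselj_m(z_l)\besselj_m'(z_l)\bessely_m(z_s)\bessely_m'(z_s).
\]
In this identity:
\begin{itemize}
\item The bracket is $1+O(\deltaem)$ by \eqref{eq:Wronskian} and \Cref{lem:Crjy}.
\item The last term is $O(\deltaem)$ by \Cref{lem:Crrjjyy} with $q=r_l/r_s$, using $|a_{l-1}-1|\le C\deltaem$ from the simultaneous induction.
\item The base case is $\beta_{1,s}=0$.
\end{itemize}
This closes the induction directly, so your fallback via Cramer's rule is not needed.
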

\begin{proof}
We only present the proof for the Dirichlet case.\\

\emph{1.~step: $a_{\indL},b_{\indL}$.}\quad
Let $\delta_1\in(0,1/\Cdet)$ and $\deltaem<\delta_1$ henceforth.
Recall that $r_{\indL+1}=1$.
Assume that $|\bessely_m(\nu)|<|\besselj_m(\nu)|$.
Then due to \Cref{lem:det} it follows that $|\besselj_m(\nu)| \leq \Cdet \deltaem |\besselj_m(\nu)|$ $<$ $|\besselj_m(\nu)|$, which is a contradiction.
Hence it holds that $|\bessely_m(\nu)|\geq|\besselj_m(\nu)|$ and thus $|b_{\indL}|\leq \Cdet\deltaem |a_{\indL}|$.
We normalize $a_{\indL}=1$ and hence $|(a_{\indL},b_{\indL})-(1,0)|\leq \Cdet\deltaem$.
\\
\emph{2.~step: $a_{l},b_{l},l=2,\dots,\indL-1$.}\quad
Having already obtained $(a_{\indL},b_{\indL})$ we can compute $(a_{\indL-1},b_{\indL-1})$ from $\AmatDir_{\indL}(\nu)$:
\begin{align*}
\bpm \besselj_{m\indL} & \bessely_{m\indL} \\
\frac{\epsilon_{\indL-1}}{\epsilon_{\indL}} \besselj'_{m\indL} & \frac{\epsilon_{\indL-1}}{\epsilon_{\indL}}\bessely'_{m\indL} \epm
\bpm a_{\indL-1} \\ b_{\indL-1} \epm &= 
\bpm \besselj_{m\indL} & \bessely_{m\indL} \\
\besselj'_{m\indL} & \bessely'_{m\indL} \epm
\bpm a_{\indL} \\ b_{\indL} \epm,
\end{align*}
i.e.,
\begin{align*}
\bpm a_{\indL-1} \\ b_{\indL-1} \epm &=
\frac{\epsilon_{\indL}}{\epsilon_{\indL-1}} \bpm a_{\indL} \\ b_{\indL} \epm +
\frac{\epsilon_{\indL}}{\epsilon_{\indL-1}}
\Big( \frac{\epsilon_{\indL-1}}{\epsilon_{\indL}}-1 \Big)
\frac{\pi}{2}
\bpm r\besselj_{m\indL}\bessely'_{m\indL} \\ r\besselj_{m\indL}\besselj'_{m\indL} \epm
a_{\indL}.
\end{align*}
By the first step, \Cref{lem:Crjy,lem:Crjj} and induction we obtain the existence of a constant $\Cvec>0$ (which is independent of $m$ and $\nu$) such that
\begin{align*}
|(a_l,b_l)-(1,0)|\leq \Cvec \deltaem \quad\text{for all }l=2,\dots,\indL.
\end{align*}
\\
\emph{3.~step: $a_{1}$.}\quad
If $\nu r_2\in [m,m+m^{1/3}\tstar]$, then \eqref{eq:SherJ}, \eqref{eq:SherJp} yield the existence of a constant $\clb$ such that $\max\{|r^{1/3}\besselj_{m2}|,|r^{2/3}\besselj'_{m2}|\}>\clb$.
If $|r^{1/3}\besselj_{m2}|>\clb$, we can express $a_1=a_2+\frac{r^{1/3}\bessely_{m2}}{r^{1/3}\besselj_{m2}}b_2$ and then \eqref{eq:SherY}, \eqref{eq:SherYp} yield that $|a_1-1|\leq \Cvec \deltaem$ (with an updated constant $\Cvec$).
The reasoning in the case $|r^{2/3}\besselj'_{m2}|>\clb$ is similar.
Like-wise, for $\nu r_2\in [m+m^{1/3}\tstar,+\infty)$ (and large enough $\tstar$) we obtain the claim by means of \Cref{lem:besselOscZoneII}.
It remains to discuss the case $\nu r_2\in (0,m]$.
Note that $\bessely_{m2}\neq0$ and hence the first row of $\AmatDir_{\indL}(\nu)\wvec=0$ yields $b_2=\frac{\besselj_{m2}}{\bessely_{m2}}(a_1-a_2)$.
Plugging this into the second row of $\AmatDir_{\indL}(\nu)\wvec=0$ we obtain by means of \eqref{eq:Wronskian} and \Cref{lem:Crjy} that
\begin{align*}
|a_1-a_2|&=|1-\frac{\epsilon_1}{\epsilon_2}|
\bigg|\frac{\epsilon_1}{\epsilon_2}-\frac{\besselj_{m2}\bessely_{m2}'}{\besselj'_{m2}\bessely_{m2}} \bigg|^{-1}|a_2|
\leq 4\epsilon_0\deltaem \bigg|\frac{\epsilon_1}{\epsilon_2}-1+\frac{2}{\pi}\frac{1}{\nu r_2\besselj_{m2}'\bessely_{m2}}\bigg|^{-1}|a_2|\\
&\leq 4\epsilon_0\deltaem \bigg|\frac{2}{\pi}\frac{1}{\Crjy}-4\epsilon_0\deltaem \bigg|^{-1}|a_2|
\leq 4\epsilon_0\pi\Crjy\deltaem|a_2|
\end{align*}
for all $\deltaem<\delta_2:=\min\{\delta_1,\frac{1}{4\epsilon_0\pi\Crjy}\}$, what we assume henceforth.
Thus in all cases $|a_1-1|\leq\Cvec\deltaem$ (with an updated constant $\Cvec$).\\
\end{proof}
\begin{proposition}\label{prop:eftraces}
There exist $\Ceftrace,\delta_0>0$ such that if $\deltaem<\delta_0$, then for each $m\in\Nz$ and each eigenpair
$(\evsaneu_{n,m},v_{n,m})\in\Rp\times \Vm\setminus\{0\}$ to \eqref{eq:evps-scalar-vm} and
$(\evsadir_{n,m},w_{n,m})\in\Rp\times \Wm\setminus\{0\}$ to \eqref{eq:evps-scalar-wm} it holds that
$\|v_{n,m}\|_{\intf} \leq \Ceftrace n^{1/6} \|v_{n,m}\|_{\domcs}$ and
$\|w_{n,m}\|_{\intf} \leq \Ceftrace n^{1/6}\|w_{n,m}\|_{\domcs}$.
\end{proposition}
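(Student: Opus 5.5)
We reduce the trace estimate to a one-dimensional statement about Bessel functions and then use the explicit representation of the eigenfunctions from \Cref{lem:vector}. We treat only the Dirichlet case; the Neumann case is analogous, with \Cref{lem:vector} applied to $\AmatNeu_{\indL}$.

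\emph{Reduction to the unperturbed radial profile.} Write $w_{n,m}=e^{im\theta}u(r)$ with $u|_{(r_l,r_{l+1})}=a_l\besselj_m(\nu r)+b_l\bessely_m(\nu r)$, where $\nu=\evsadir_{n,m}$ and the coefficients are normalized by $a_{\indL}=1$. Then
\[
\|w_{n,m}\|_{\intf}^2=2\pi\sum_{l=2}^{\indL} r_l|u(r_l)|^2,
\qquad
\|w_{n,m}\|_{\domcs}^2=2\pi\int_0^1|u|^2r\,\dd r .
\]
By \Cref{lem:vector}, $|a_l-1|,|b_l|\leq\Cvec\deltaem$. Hence on every annulus
\[
|u(r)-\besselj_m(\nu r)|\leq \Cvec\deltaem\bigl(|\besselj_m(\nu r)|+|\bessely_m(\nu r)|\bigr).
\]
On $(0,r_2)$ only $\besselj_m$ appears, since $b_1=0$, so there the error is $\Cvec\deltaem|\besselj_m(\nu r)|$.

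\emph{Lower bound for the $L^2$ norm.} We want $\int_{r_{\indL}}^1|u|^2r\,\dd r\gtrsim \int_{r_{\indL}}^1|\besselj_m(\nu r)|^2r\,\dd r$. On the outer annulus $b_{\indL}$ is small, but $\bessely_m(\nu r)$ can be huge relative to $\besselj_m$ when $\nu r<m$. To handle this we use the relation $|\bessely_m(\nu)|\geq|\besselj_m(\nu)|$ together with the estimate $|b_{\indL}|\,|\bessely_m(\nu)|\leq\Cdet\deltaem\max\{|\besselj_m(\nu)|,|\bessely_m(\nu)|\}$ from step 1 of \Cref{lem:vector}. This shows that the $\bessely$ contribution is controlled relative to the dominant part of the eigenfunction. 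I would then argue separately in the three zones $\nu<m$, $\nu\in[m,m+m^{1/3}\tstar]$ and $\nu>m+m^{1/3}\tstar$, using \Cref{lem:besselTransZone} and \Cref{lem:besselOscZoneII}.

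\emph{Trace versus $L^2$ bound for the model function.} Here we show that for $\phi(r)=\besselj_m(\nu r)$ (and perturbations of it of the above type)
\[
\max_{l}|\phi(r_l)|^2\leq C n^{1/3}\int_{0}^{1}|\phi|^2r\,\dd r .
\]
The zones are treated as follows.
\begin{itemize}
\item In the oscillatory zone ($\nu r>m+m^{1/3}\tstar$), \Cref{lem:besselOscZoneII} gives $|\phi(r)|^2\approx\frac{2}{\pi}(\nu^2r^2-\mushift)^{-1/2}\cos^2(\cdot)$. Integrating over an interval of phase length $\pi$ gives a comparable mass, so the trace is bounded by $C\|\phi\|^2$ uniformly.
\item In the evanescent region ($\nu r_l<m$), $\besselj_m$ is increasing, so $|\phi(r_l)|\leq|\phi(r)|$ for $r\in[r_l,1]$ wherever the function has not yet turned over. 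Combined with the transition-zone scaling, the mass of $\phi$ near its first maximum, which lies at $r\approx m/\nu$, dominates.
\item In the transition zone, \Cref{lem:besselTransZone} gives $|\phi(r_l)|\lesssim m^{-1/3}$. If the first maximum at $m/\nu$ lies inside $\domcs$, the mass is at least $\int |\phi|^2 r\,\dd r\gtrsim m^{-2/3}\cdot m^{1/3}/\nu$, because the Airy bump has width $m^{1/3}/\nu$ in $r$. This gives a ratio $\lesssim \nu/m^{1/3}$. Since for eigenvalues in the transition regime $\nu\approx m+c\,m^{1/3}$ with $n$ bounded, while in general $n\gtrsim(\nu-m)^{3/2}/m^{1/2}$ from the phase function $\phaseB$, the ratio is bounded in terms of $n^{1/3}$.
\end{itemize}
Taking square roots gives the factor $n^{1/6}$.

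\emph{Main obstacle.} The hardest step is to make this comparison uniform in $m$ and $n$ simultaneously, in particular when an interface radius $r_l$ sits in the transition zone ($\nu r_l\approx m$) while the boundary $r=1$ lies deep in the oscillatory zone. I would first try the crude bound $\sup_r|\besselj_m(\nu r)|^2\leq C\nu^{-2/3}$, which holds uniformly by the Airy scaling. Against this I would use the lower mass bound $\int_{r_{\indL}}^1|\besselj_m(\nu r)|^2 r\,\dd r\gtrsim \nu^{-1}$ in the oscillatory regime, obtained from \Cref{lem:besselOscZoneII} with phase counting, to get a ratio $\nu^{1/3}$. Finally I would convert $\nu$ to $n$ via the eigenvalue counting of \Cref{lem:ev-conv}: $\nu\lesssim n+m$. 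The worry is that the $m$-dependence does not cancel; that is exactly where the localized Airy-bump mass estimate has to be used instead of the crude one.
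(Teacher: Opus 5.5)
Your plan has two genuine gaps, both at the points where uniformity in $m$ is decided.

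\emph{First gap: the evanescent zone.} For $\nu r<m$ the bound $|b_l|\leq\Cvec\deltaem$ from \Cref{lem:vector} is useless, because $|\bessely_m(\nu r)|/|\besselj_m(\nu r)|$ is exponentially large in $m$ there. So your estimate $|u-\besselj_m(\nu\cdot)|\leq\Cvec\deltaem(|\besselj_m|+|\bessely_m|)$ controls neither the trace at an evanescent interface nor the $L^2$ mass of the adjacent annulus. Step~1 of \Cref{lem:vector} does not repair this: it only concerns $r=1$, where $|b_{\indL}\bessely_m(\nu)|=|\besselj_m(\nu)|$, and says nothing at interior radii. The missing idea is to propagate relative smallness outward from the origin. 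The first row of $\AmatDir_{\indL}(\nu)\wvec=0$ gives $|\bessely_m(\nu r_2)b_2|=|a_1-a_2|\,|\besselj_m(\nu r_2)|\leq 2\Cvec\deltaem|\besselj_m(\nu r_2)|$. The subsequent transmission rows, together with the monotonicity of $\besselj_m$ and $\bessely_m$ on $(0,m]$, then give inductively $|\bessely_m(\nu r_l)b_l|\leq C\deltaem|\besselj_m(\nu r_l)|$. Only with this does monotonicity bound the trace at $r_l$ by the $L^2$ norm on $(r_l,r_{l+1})$, uniformly in $m$ and $n$.

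\emph{Second gap: your zone estimates do not deliver $n^{1/6}$.}
\begin{itemize}
\item If an interface sits at the turning point, $\nu r_l\approx m$, then $\nu\approx m/r_l$ and $n\sim m$. The regime $\nu\approx m+cm^{1/3}$ with bounded $n$ that you invoke has all interfaces deep in the evanescent zone, so it belongs to the monotone case, not here.
\item In the genuine transition case your Airy-bump mass gives a squared ratio $\nu m^{-1/3}\sim n^{2/3}$, i.e.\ $n^{1/3}$ after taking roots.
\item Your ``crude'' variant rests on a false bound: $\sup|\besselj_m|^2$ is of order $m^{-2/3}$ (and equals $1$ for $m=0$), not $\nu^{-2/3}$.
\item In the oscillatory zone with $\nu r_l\gg m$, a single interval of phase length $\pi$ carries mass only $\sim\nu^{-2}$, against a squared trace $\sim\nu^{-1}$. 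So it is not comparable either; you need a macroscopic interval.
\end{itemize}

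The correct mechanism is the opposite of your final suggestion. Use the mass $\gtrsim\nu^{-1}$ of the whole oscillatory region $(m/\nu,1)$, which is available once $\nu\geq(1+c)m$. Compare it with $\sup_r|\besselj_m|+\sup_{r\geq m}|\bessely_m|\lesssim m^{-1/3}$. This gives a ratio $\lesssim m^{-1/3}\nu^{1/2}\sim m^{1/6}\lesssim n^{1/6}$, precisely because $n\sim m\sim\nu$ in this regime.

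This is why the paper splits by $n$ versus $m$ rather than by interface position:
\begin{itemize}
\item For $n<\Cnl m$ all interfaces are evanescent, and the induction above gives a uniform bound.
\item For $n>\Cnu m+1$ all interfaces are oscillatory. The squared trace is $\lesssim\nu^{-1}$, and the mass is $\gtrsim\nu^{-1}$ via the Lommel integral and the comparison of $\evsadir_{n,m}$ with $\besseljz_{n,m}$ from \Cref{lem:ev-conv}.
\item Only for $n\in[\Cnl m,\Cnu m+1]$ is the factor $n^{1/6}$ lost.
\end{itemize}
The Qu--Wong bounds and $\Aiz_n\sim n^{2/3}$ translate these index ranges into the position of $\nu$ relative to $m$.
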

\begin{proof}
We start with analyzing the Dirichlet case, i.e., \eqref{eq:evps-scalar-wm}.
Recall from \Cref{subsec:reductiontomatrix} that
\begin{align*}
e^{-im\theta}w_{n,m}|_{(r_1,r_{2})}(r)=a_{n1}\besselj_m(\evsadir_{n,m} r),\qquad
e^{-im\theta}w_{n,m}|_{(r_l,r_{l+1})}(r)=a_{nl}\besselj_m(\evsadir_{n,m} r)+b_{nl}\bessely_m(\evsadir_{n,m} r),
\end{align*}
$l=2,\dots,\indL$ with constants $a_{nl},b_{nl}\in\setC$,
and $\wvec$ $:=$ $(a_{n1},a_{n2},b_{n2},\dots,a_{n\indL},b_{n\indL})^\top$ $\in$ $\setC^{2\indL-1}$ solves $\AmatDir_{\indL}(\evsadir_{n,m})\wvec=0$ with $\AmatDir_{\indL}(\evsadir_{n,m})$ as defined in \eqref{eq:AmatDir}.
Let $(\Cvec,\delta_1)$ be as in \Cref{lem:vector} and $\deltaem<\delta_1$ henceforth, such that after a normalization we have $|a_{n1}-1|,|(a_{nl},b_{nl})-(1,0)|\leq\Cvec\deltaem$, $l=2,\dots,\indL$.
We consider $m\in\setN$ and will treat the case $m=0$ separately.
We distinguish the three cases $n<\Cnl m$, $n\in [\Cnl m,\Cnu m+1]$ and $n>\Cnu m+1$, where the constants $\Cnu>\Cnl>0$ will be specified in the course of our analysis.
To this end let us note that there exist constants $\cAil,\CAiu>0$ such that
\begin{align*}
	\Aiz_n,\Aiz'_n \in [\cAil n^{2/3}, \CAiu n^{2/3}] \quad\text{for all } n\in\setN,
\end{align*}
see, e.g., \cite[Sect.\ 9.9(iv)]{NISTDLMF}, \cite{PittalugaSacripante91}, \cite{Krasikov14a} and note the interlacing of $\Aiz_n$ and $\Aiz'_n$.
Furthermore, recall from \cite{QuWong99} that
\begin{align}\label{eq:QuWong99}
m+\frac{\Aiz_n}{2^{1/3}}m^{1/3}
< \besseljz_{m,n}
< m+\frac{\Aiz_n}{2^{1/3}}m^{1/3} +\frac{3\cdot2^{1/3}}{20}\frac{\Aiz_n^2}{m^{1/3}}
\quad\text{for all }n,m\in\setN.
\end{align}
Let $\Cnl>0$ be such that
$\big(1+\frac{\CAiu \Cnl^{2(3}}{2^{1/3}} +\frac{3\cdot2^{1/3}}{20} \CAiu^2 \Cnl^{4/3}\big) r_{\indL}<1$ (which is possible because $r_{\indL}<1$).
We employ the abbreviations
$\besselj_{nml}:=\besselj_{m}(\evsadir_{n,m}r_l)$ and
$\bessely_{nml}:=\bessely_{m}(\evsadir_{n,m}r_l)$.\\

\emph{1.~case: $n<\Cnl m$}.\quad
First, we show with complete induction that there exists a constant $\Cab>0$ such that
\begin{align}\label{eq:pointvalues-aux}
%|\bessely_{ml}b_l| \leq \Cab|\besselj_{ml}\deltaem|\\
|\bessely_{nml}b_l| \leq \Cab|\besselj_{nml}|\deltaem
%|\bessely_{m}(\evsadir_{n,m}r_l)b_l| \leq \Cab|\besselj_{m}(\evsadir_{n,m}r_l)|\deltaem
\end{align}
for all $l\in\{2,\dots,\indL\}$ and $n<\Cnl m$, $m\in\setN$.
For $l=2$ the first row of $\AmatDir_{\indL}(\evsadir_{n,m})$ yields $|\bessely_{nm2}b_2| \leq |\besselj_{nm2}(a_{n2}-a_{n1})| \leq 2\Cvec \besselj_{nm2} \deltaem$.
For an index $l>2$ the $(2l-3)$th row of $\AmatDir_{\indL}(\evsadir_{n,m})$ yields
\begin{align*}
|\bessely_{nml}b_{nl}| \leq |\besselj_{nml}(a_{nl}-a_{n(l-1)})|+|\bessely_{nml}b_{n(l-1)}|
&\leq 2\Cvec \besselj_{nml}\deltaem + \frac{|\bessely_{nml}|}{|\bessely_{nm(l-1)}|} \besselj_{nm(l-1)} \Cab\deltaem\\
&\leq 2\Cvec \besselj_{nml}\deltaem + \frac{|\bessely_{nml}|}{|\bessely_{nm(l-1)}|} \frac{\besselj_{nm(l-1)}}{\besselj_{nml}} \besselj_{nml} \Cab\deltaem\\
&\leq (2\Cvec+\Cab) \besselj_{nml}\deltaem,
\end{align*}
due to the induction assumption and the monotonicity of $\besselj_m,\bessely_m$ in $[0,m]$.
Thus \eqref{eq:pointvalues-aux} is established.
For $l=2,\dots,\indL-1$ we estimate that
\begin{align}
\label{eq:infl-est}
\|w_{n,m}\|_{\intf_l}
\leq r_l^{1/2} (|\besselj_{nml}a_{nl}|+|\bessely_{nml}b_{nl}|)
&\leq (1+(\Cab+1)\deltaem) r_l^{1/2} |\besselj_{nml}|\\
\nonumber
&\leq \frac{(1+(\Cab+1)\deltaem)}{r_{l+1}-r_l} \|\cdot^{1/2}\besselj_{m}(\evsadir_{n,m}\cdot)\|_{L^2(r_l,r_{l+1})}
\end{align}
and
\begin{align*}
\|w_{n,m}\|_{L^2(\domcs_l)}
&\geq \|\cdot^{1/2}\besselj_{m}(\evsadir_{n,m}\cdot)\|_{L^2(r_l,r_{l+1})} |a_{nl}|
-\|\cdot^{1/2}\bessely_{m}(\evsadir_{n,m}\cdot)\|_{L^2(r_l,r_{l+1})} |b_{nl}|\\
&\geq \|\cdot^{1/2}\besselj_{m}(\evsadir_{n,m}\cdot)\|_{L^2(r_l,r_{l+1})} (1-\deltaem)
-\|\cdot^{1/2}\|_{L^2(r_l,r_{l+1})} |\bessely_{nml}| |b_{nl}|\\
&\geq \|\cdot^{1/2}\besselj_{m}(\evsadir_{n,m}\cdot)\|_{L^2(r_l,r_{l+1})} (1-\deltaem)
-\|\cdot^{1/2}\|_{L^2(r_l,r_{l+1})} \Cab \deltaem |\besselj_{nml}|\\
&\geq \|\cdot^{1/2}\besselj_{m}(\evsadir_{n,m}\cdot)\|_{L^2(r_l,r_{l+1})} (1-(\Cab+1)\deltaem)
\end{align*}
due to the monotonicity properties of Bessel functions.
Let $\delta_2\in(0,\delta_1)$ be such that $1-(\Cab+1)\delta_2>0$ and $\deltaem\in(0,\delta_2)$ henceforth.
Then $\|w_{n,m}\|_{L^2(\intf_l)} \leq C\|w_{n,m}\|_{L^2(\domcs)}$ with $C=\frac{(1+(\Cab+1)\deltaem)}{(1-(\Cab+1)\deltaem)} \frac{1}{r_{l+1}-r_l}$, $l=2,\dots,\indL-1$.
For $l=\indL$ we estimate identically, but use the integration interval
$\Big(r_{\indL}, 1/\big(1+\frac{\CAiu \Cnl^{2(3}}{2^{1/3}} +\frac{3\cdot2^{1/3}}{20} \CAiu^2 \Cnl^{4/3}\big)\Big)$
instead of $(r_l,r_{l+1})$ (to stay in the monotonicity zone).
Thus the claim for the first case $n<\Cnl m$ follows.
\\

\emph{2.~case: $n>\Cnu m+1$}.\quad
We choose $\Cnu>1$ large enough to satisfy several conditions.
The first is that
\begin{subequations}
\begin{align}
\big(1+\frac{\cAil\Cnu^{2/3}}{2^{1/3}}\big)r_2>1.
%,\\
%\sqrt{\frac{2}{\pi}}\sin(\pi/4)-\frac{7}{\sqrt{2\pi}}\frac{1}{(\frac{\cAil \Cnu^{2/3}}{2^{1/3}})^{1/2}}>\frac{1}{\sqrt{\pi}}\sin(\pi/4).
\end{align}
Second, let $T>1$ be large enough such that
\begin{align}\label{eq:T}
\sqrt{\frac{2}{\pi}}\sin(\pi/4)-\frac{7}{\sqrt{2\pi}}\frac{1}{T^{1/2}}>\frac{1}{\sqrt{\pi}}\sin(\pi/4).
\end{align}
We choose $\delta_3\in(0,\delta_2)$ and $\tstar>T$ according to \Cref{lem:ev-conv} for $\eta<\pi/2$ with $\eta$ to be specified later,
and demand that
\begin{align}\label{eq:CnuTstar}
\frac{\cAil\Cnu^{2/3}}{2^{1/3}}>\tstar,
\end{align}
\end{subequations}
which guarantees $\besseljz_{n,m}>m+m^{1/3}\tstar$ (appearing in \eqref{eq:ev-conv-b}).
Due to our choice of $\Cnu$ it follows with \Cref{lem:besselOscZoneII} that
\begin{align*}
\|w_{n,m}\|_{\intf} \leq \frac{1}{\evsadir_{n,m}^{1/2}}
\frac{(\indL-1)\big(1+(1+\Cvec)\deltaem\big)r_{\indL}^{1/2}}{r_2^{1/2}}
\frac{\sqrt{\frac{2}{\pi}}+\frac{13}{12\sqrt{2\pi}\Big(\big(1+\frac{\cAil\Cnu^{2/3}}{2^{1/3}}\big)r_2-1\Big)^{3/2}}}%
{\left(1-\big(1+\frac{\cAil\Cnu^{2/3}}{2^{1/3}}\big)^{-2}r_2^{-2}\right)^{1/4}}.
\end{align*}
Note that in the forthcoming estimates whenever possible will use the constant $\Cnl$ (instead of $\Cnu$) to reuse the estimates for the third case later on.
To estimate $\|w_{n,m}\|_{\domcs}$ we note that the techniques applied for the ``1.~case: $n<\Cnl m$'' yield that
\begin{align*}
\|w_{n,m}\|_{B_{m/\evsadir_{n,m}}} \geq \big(1-(1+\Cvec)\deltaem\big) \|\cdot^{1/2}\besselj_m(\evsadir_{n,m}\cdot)\|_{L^2(0,m/\evsadir_{n,m})}.
\end{align*}
Next, we have that
\begin{align*}
\|w_{n,m}\|_{B_1 \setminus B_{m/\evsadir_{n,m}}} \geq
(1-\deltaem) \|\cdot^{1/2}\besselj_m(\evsadir_{n,m}\cdot)\|_{L^2(m/\evsadir_{n,m},1)}
-\Cvec\deltaem \|\cdot^{1/2}\bessely_m(\evsadir_{n,m}\cdot)\|_{L^2(m/\evsadir_{n,m},1)}.
\end{align*}
Combining the previous two estimates we obtain that
\begin{align*}
\|w_{n,m}\|_{\domcs}^2 \geq
\frac{\big(1-(1+\Cvec)\deltaem\big)^2}{2} \|\cdot^{1/2}\besselj_m(\evsadir_{n,m}\cdot)\|_{L^2(0,1)}^2
-\Cvec^2\deltaem^2 \|\cdot^{1/2}\bessely_m(\evsadir_{n,m}\cdot)\|_{L^2(m/\evsadir_{n,m},1)}^2.
\end{align*}
Using \cite[Eq.~10.22.5]{NISTDLMF} (together with \cite[10.6.2]{NISTDLMF})
\begin{align*}
\int^z rF_m(ar)^2 \, dr
&=\frac{z^2}{2} \big( F_m(az)^2-F_{m-1}(az)F_{m+1}(az) \big)\\
&=\frac{z^2}{2} \Big( F_m(az)^2-\big(F'_{m}(az)+\frac{m}{az}F_m(az)\big)\big(-F'_{m}(az)+\frac{m}{az}F_m(az)\big) \Big),\quad a>0,\quad F=J,Y,
%\int^z r\besselj_m(ar)^2 \, dr=\frac{z^2}{2} \big( \besselj_m(az)^2-\besselj_{m-1}(az)\besselj_{m+1}(az) \big), a>0
\end{align*}
we have that
%\cite[10.22.37]{NISTDLMF}: $\int_0^1 r\besselj_m(\besseljz_{m,n}r)^2 \, dr=\frac{\besselj'_m(\besseljz_{m,n})^2}{2}$\\
\begin{align*}
2\|\cdot^{1/2}&\besselj_m(\evsadir_{n,m}\cdot)\|_{L^2(0,1)}^2\\
&=2\big(\|\cdot^{1/2}\besselj_m(\besseljz_{n,m}\cdot)\|_{L^2(0,1)}^2
+\|\cdot^{1/2}\besselj_m(\evsadir_{n,m}\cdot)\|_{L^2(0,1)}^2
-\|\cdot^{1/2}\besselj_m(\besseljz_{n,m}\cdot)\|_{L^2(0,1)}^2\big)\\
&=\besselj'_m(\besseljz_{m,n})^2
+\big(1-\frac{m^2}{\evsadir_{n,m}^2}\big)\besselj_m(\evsadir_{n,m})^2
+\besselj'_m(\evsadir_{m,n})^2-\besselj'_m(\besseljz_{m,n})^2\\
%-\besselj_{m-1}(\evsadir_{n,m})\besselj_{m+1}(\evsadir_{n,m})
%-\besselj_m(\besseljz_{m,n})^2-\besselj_{m-1}(\besseljz_{m,n})\besselj_{m+1}(\besseljz_{m,n}).
&=\besselj'_m(\besseljz_{m,n})^2
+\big(1-\frac{m^2}{\evsadir_{n,m}^2}\big)
\big(\besselj_m(\evsadir_{n,m})-\besselj_m(\besseljz_{n,m})\big)
\big(\besselj_m(\evsadir_{n,m})+\besselj_m(\besseljz_{n,m})\big)\\
&\hspace{37mm}+\big(\besselj'_m(\evsadir_{m,n})-\besselj'_m(\besseljz_{m,n})\big)
\big(\besselj'_m(\evsadir_{m,n})+\besselj'_m(\besseljz_{m,n})\big).
\end{align*}
First we estimate $|\besselj'_m(\besseljz_{m,n})|$ from below, after which we will show that the perturbation terms are sufficiently small.
Applying \Cref{lem:besselOscZoneII} yields that
\begin{align*}
|\besselj'_m(\besseljz_{m,n})| &\geq \frac{1}{\besseljz_{n,m}^{1/2}}
\left(1-\frac{1}{1+\cAil\Cnl^{2/3}2^{-1/3}}\right)
%\frac{1+\cAil\Cnl^{2/3}2^{-1/3}}{\cAil\Cnl^{2/3}2^{-1/3}}
%\frac{(r^2-\mushift)^{1/4}}{r}
\left(\sqrt{\frac{2}{\pi}}\sin(\pi/4)-\frac{7}{\sqrt{2\pi}}\frac{1}{
%(\frac{\cAil (\Cnu m)^{2/3}}{2^{1/3}})
\tstar
^{1/2}}\right)\\
&\geq \frac{1}{\besseljz_{n,m}^{1/2}}
\left(1-\frac{1}{1+\cAil\Cnl^{2/3}2^{-1/3}}\right)
\frac{1}{\sqrt{\pi}} \sin(\pi/4).
%\\
%&\geq \frac{1}{\besseljz_{n,m}^{1/2}}
%\left(\sqrt{\frac{2}{\pi}}\sin(\pi/4)-\frac{7}{\sqrt{2\pi}}\frac{1}{(\frac{\cAil \Cnu^{2/3}}{2^{1/3}})^{1/2}}\right).
\end{align*}
To extract from \Cref{lem:besselOscZoneII} an $m$-uniform estimate we have just used that $\besseljz_{n,m}>m(1+\frac{\cAil \Cnl^{2/3}}{2^{1/3}})$ with the appearing constant being larger than one.
To obtain a similar estimate for $\evsadir_{n,m}$ we proceed as follows.
Our assumptions ensure that $\besseljz_{n-1,m}>m+m^{1/3}\tstar$.
Hence, using \eqref{eq:ev-conv-b} of \Cref{lem:ev-conv} we have
\begin{align*}
\evsadir_{n,m}-\besseljz_{n-1,m}\geq\int^{\evsadir_{n,m}}_{\besseljz_{n-1,m}} \phaseB'(z)\,dx
=\phaseB(\evsadir_{n,m})-\phaseB(\besseljz_{n-1,m})
\geq \pi-\eta \geq0.
\end{align*}
Now we can estimate
\begin{align*}
|\besselj_m(\besseljz_{n,m})-\besselj_m(\evsadir_{n,m})|
&\leq |\besseljz_{n,m}-\evsadir_{n,m}| \sup_{z\in [\min\{\besseljz_{n,m},\evsadir_{n,m}\},\max\{\besseljz_{n,m},\evsadir_{n,m}\}]} |\besselj_m'(z)|\\
&\leq \eta \sup_{t\in [\min\{\phaseB(\besseljz_{n,m}),\phaseB(\evsadir_{n,m})\},\max\{\phaseB(\besseljz_{n,m}),\phaseB(\evsadir_{n,m})\}]} |(\phaseB^{-1})'(t)|
\sup_{z>\besseljz_{n-1,m}} |\besselj_m'(z)|\\
&= \eta \sup_{z\in [\min\{\besseljz_{n,m},\evsadir_{n,m}\},\max\{\besseljz_{n,m},\evsadir_{n,m}\}] } \frac{z}{\sqrt{z^2-\mushift}} %|1/\phaseB'(z)|
\sup_{z>\besseljz_{n-1,m}} |\besselj_m'(z)|\\
&\leq \eta \frac{\besseljz_{n-1,m}}{\sqrt{\besseljz_{n-1,m}^2-\mushift}}
\sup_{z>\besseljz_{n-1,m}} |\besselj_m'(z)|\\
&\leq \eta \frac{1+\cAil\Cnl^{2/3}2^{-1/3}}{\sqrt{(1+\cAil\Cnl^{2/3}2^{-1/3})^2-1}}
\frac{\sqrt{2/\pi}+\frac{7}{\sqrt{2\pi}} \frac{1}{
%(\frac{\cAil \Cnu^{2/3}}{2^{1/3}})
\tstar
^{1/2}}
}{\besseljz_{n-1,m}^{1/2}}.
% ???
%\frac{\sqrt{\frac{2}{\pi}}+\frac{13}{12\sqrt{2\pi}\big(\frac{\cAil\Cnu^{2/3}}{2^{1/3}}\big)^{3/2}}}
\end{align*}
Since the previous obtained estimate contains the factor $\besseljz_{n-1,m}^{-1/2}$ instead of $\besseljz_{n,m}^{-1/2}$, we compute
\begin{align*}
\frac{\besseljz_{n,m}}{\besseljz_{n-1,m}}
=1+\frac{\besseljz_{n,m}-\besseljz_{n-1,m}}{\besseljz_{n-1,m}}
&\leq 1+\frac{\phaseB(\besseljz_{n,m})-\phaseB(\besseljz_{n-1,m})}{\besseljz_{n-1,m}}
\frac{\besseljz_{n-1,m}}{\sqrt{\besseljz_{n-1,m}^2-\mushift}}\\
&\leq 1+\frac{3\pi}{2}
%&\leq 1+\frac{\pi+\eta}{\besseljz_{1,0}}
\frac{1}{\sqrt{(1+\cAil\Cnl^{2/3}2^{-1/3})^2-1}}
=:\Cjrat.
\end{align*}
Further, note that \Cref{lem:besselOscZoneII} yields
\begin{align*}
|\besselj_m(\besseljz_{n,m})+\besselj_m(\evsadir_{n,m})| \leq
\besseljz_{n,m}^{-1/2}\big(1+\Cjrat^{1/2}\big)
\frac{\sqrt{\frac{2}{\pi}}+\frac{13}{12\sqrt{2\pi}
%\big(\frac{\cAil\Cnu^{2/3}}{2^{1/3}}\big)
\tstar
^{3/2}}}%
{\left(1-\big(1+\frac{\cAil\Cnl^{2/3}}{2^{1/3}}\big)^{-2}\right)^{1/4}}.
\end{align*}
Thus 
\begin{align*}
&|\big(1-\frac{m^2}{\evsadir_{n,m}^2}\big)
\big(\besselj_m(\evsadir_{n,m})-\besselj_m(\besseljz_{n,m})\big)
\big(\besselj_m(\evsadir_{n,m})+\besselj_m(\besseljz_{n,m})\big)|\\
&\leq 
\frac{\eta}{\besseljz_{n,m}}
\underbrace{\big(\Cjrat+\Cjrat^{3/2}\big)
\frac{\sqrt{\frac{2}{\pi}}+\frac{13}{12\sqrt{2\pi}
%\big(\frac{\cAil\Cnu^{2/3}}{2^{1/3}}\big)
\tstar
^{3/2}}}%
{\left(1-\big(1+\frac{\cAil\Cnl^{2/3}}{2^{1/3}}\big)^{-2}\right)^{1/4}}
\frac{1+\cAil\Cnl^{2/3}2^{-1/3}}{\sqrt{(1+\cAil\Cnl^{2/3}2^{-1/3})^2-1}}
\Big(\sqrt{\frac{2}{\pi}}+\frac{7}{\sqrt{2\pi}} \frac{1}{\tstar^{1/2}}\Big)}_
{C_1:=}.
\end{align*}
In a similar way and using \eqref{eq:JYppp} we are able to obtain that
\begin{align*}
|\besselj'_m(\evsadir_{n,m})+\besselj'_m(\besseljz_{n,m})|
\leq \besseljz_{n,m}^{-1/2}\big(1+\Cjrat^{1/2}\big)
\Big(\sqrt{\frac{2}{\pi}}+\frac{7}{\sqrt{2\pi}
%\big(\frac{\cAil\Cnu^{2/3}}{2^{1/3}}\big)
\tstar
^{1/2}}\Big)
\end{align*}
and
\begin{align*}
|\besselj'_m(\evsadir_{n,m})&-\besselj'_m(\besseljz_{n,m})|
\leq \eta
\frac{1+\cAil\Cnl^{2/3}2^{-1/3}}{\sqrt{(1+\cAil\Cnl^{2/3}2^{-1/3})^2-1}}
\sup_{z>\besseljz_{n-1,m}} |\besselj_m''(z)|\\
&\leq \frac{\eta}{\besseljz_{n,m}^{1/2}}
\frac{1+\cAil\Cnl^{2/3}2^{-1/3}}{\sqrt{(1+\cAil\Cnl^{2/3}2^{-1/3})^2-1}}
\Cjrat^{1/2}\bigg(
2\sqrt{\frac{2}{\pi}}+\frac{7}{\sqrt{2\pi}}\frac{1}{
%\big(\frac{\cAil\Cnu^{2/3}}{2^{1/3}}\big)
\tstar
^{1/2}}+\frac{13}{12\sqrt{2\pi}
%\big(\frac{\cAil\Cnu^{2/3}}{2^{1/3}}\big)
\tstar
^{3/2}}
\bigg).
\end{align*}
Thus
\begin{align*}
|\big(\besselj'_m(\evsadir_{n,m})-\besselj'_m(\besseljz_{n,m})\big)
\big(\besselj'_m(\evsadir_{n,m})+\besselj'_m(\besseljz_{n,m})\big)|
\leq \frac{\eta}{\besseljz_{n,m}} C_2
\end{align*}
with 
$C_2:=
\big(1+\Cjrat^{1/2}\big)
\Big(\sqrt{\frac{2}{\pi}}+\frac{7}{\sqrt{2\pi}\tstar^{1/2}}\Big)
\frac{1+\cAil\Cnl^{2/3}2^{-1/3}}{\sqrt{(1+\cAil\Cnl^{2/3}2^{-1/3})^2-1}}
\Cjrat^{1/2}\bigg(
2\sqrt{\frac{2}{\pi}}+\frac{7}{\sqrt{2\pi}}\frac{1}{
\tstar^{1/2}}+\frac{13}{12\sqrt{2\pi}
\tstar^{3/2}}\bigg)$.
Putting things together we have that
\begin{align*}
2\|\cdot^{1/2}&\besselj_m(\evsadir_{n,m}\cdot)\|_{L^2(0,1)}^2
\geq \frac{1}{\besseljz_{n,m}}
\Big(
\big(1-\frac{1}{1+\cAil\Cnl^{2/3}2^{-1/3}}\big)^2
\frac{\sin^2(\pi/4)}{\pi} -\eta(C_1+C_2) \Big).
\end{align*}
Next we estimate
\begin{align*}
2\|\cdot^{1/2}\bessely_m(\evsadir_{n,m}\cdot)\|_{L^2(m/\evsadir_{n,m},1)}^2
&=(1-\frac{m^2}{\evsadir_{n,m}^2}) \bessely_m(\evsadir_{n,m})^2
+\bessely'_m(\evsadir_{n,m})^2
-\frac{m^2}{\evsadir_{n,m}^2}\bessely'_m(m)^2\\
&\leq \bessely_m(\evsadir_{n,m})^2+\bessely'_m(\evsadir_{n,m})^2\\
&\leq
%\frac{\Cjrat}{\besseljz_{n,m}}
\besseljz_{n,m}^{-1}
\underbrace{\Cjrat
\Bigg(
\frac{\Big(\sqrt{\frac{2}{\pi}}+\frac{13}{12\sqrt{2\pi}\tstar^{3/2}}\Big)^2}%
{\Big(1-\big(1+\frac{\cAil\Cnl^{2/3}}{2^{1/3}}\big)^{-2}\Big)^{1/2}}
+\frac{\Big(\sqrt{\frac{2}{\pi}}+\frac{7}{\sqrt{2\pi}}\frac{1}{\tstar^{1/2}}\Big)^2}%
{\Big(1-\big(1+\frac{\cAil\Cnl^{2/3}}{2^{1/3}}\big)^{-2}\Big)^{-1/2}}
\Bigg)}_{2C_3:=}
\end{align*}
and thus
\begin{align*}
\|w_{n,m}\|_{\domcs}^2 \geq
\besseljz_{n,m}^{-1} \bigg(
\frac{\big(1-(1+\Cvec)\deltaem\big)^2}{2} 
\Big(
\big(1-\frac{1}{1+\cAil\Cnl^{2/3}2^{-1/3}}\big)^2
\frac{\sin^2(\pi/4)}{\pi} -\eta(C_1+C_2) \Big)
-\Cvec^2 C_3 \deltaem^2
\bigg).
\end{align*}
Hence we demand that
$\eta<\big(1-\frac{1}{1+\cAil\Cnl^{2/3}2^{-1/3}}\big)^2 \frac{\sin^2(\pi/4)}{2\pi(C_1+C_2)}$
and it follows that we can find $C_4>0$ and $\delta_4\in(0,\delta_3)$ such that
$\|w_{n,m}\|_{\domcs}^2 \geq C_4 \besseljz_{n,m}^{-1}$.
Hence we have proven the (more than) sufficient estimate $\|w_{n,m}\|_{\intf} \lesssim \|w_{n,m}\|_{\domcs}$.\\

\emph{3.~case: $n\in [\Cnl m, \Cnu m+1]$}.\quad
%\cite[10.21.42]{NISTDLMF}: $\besselj_m'(\besseljz_{n,m})=-\frac{2}{m^{2/3}}\frac{\Ai'(\Aiz_n)}{z(\zeta)h(\zeta)} \Big(1+O(m^{-2})\Big)$, $\zeta:=m^{-2/3}a_n$, $h(\zeta):=\big(\frac{4\zeta}{1-z(\zeta)^2}\big)^{1/4}$,
%$\frac{2}{3}(-\zeta)^{3/2}=\sqrt{z(\zeta)^2-1}-\arcsec z$
Since $\Cnu m+1 \leq \Cnu (m+1) \leq 2\Cnu m$ it suffices to consider $n\in [\Cnl m, 2\Cnu m]$.
Let $T>1$ and $\tstar>T$ be as in the 2.~case, i.e., such that \eqref{eq:T} is satisfied and $\tstar$ is chosen according to \Cref{lem:ev-conv}.
Let $\mstar\in\setN$ be such that
\begin{align}
\label{eq:MstarTstar}
\frac{\cAil(\Cnl \mstar)^{2/3}}{2^{1/3}} &> \tstar,
\end{align}
where \eqref{eq:MstarTstar} replaces the assumption \eqref{eq:CnuTstar} in the analysis of the 2.~case.
Hence it follows that
$\|w_{n,m}\|_{\domcs}^2 \geq \frac{C_4}{\besseljz_{n,m}}$ for all $(n,m)\in\{(n,m)\in\setN^2\colon m\geq\mstar,n\in [\Cnl m, 2\Cnu m]\}$.
Note that the analysis \eqref{eq:infl-est} of the 1.~case shows that
\begin{align*}
\|w_{n,m}\|_{\intf_l} \leq (1+(\Cab+1)\deltaem)|\besselj_m(\evsadir_{n,m}r_l)|
\quad\text{for }\quad\evsadir_{n,m}r_l<m.
\end{align*}
On the other hand we have that
\begin{align*}
\|w_{n,m}\|_{\intf_l} \leq (1+(\Cvec+1)\deltaem)\big(
\sup_{r>m} |\besselj_m(r)| + \sup_{r>m} |\bessely_m(r)| \big)
\quad\text{for }\quad\evsadir_{n,m}r_l\geq m.
\end{align*}
\Cref{lem:besselTransZone} and \Cref{lem:besselOscZoneII} yield that there exists a constant $C_5>0$ such that
\begin{align*}
\sup_{r>0} |\besselj_m(r)| + \sup_{r>m} |\bessely_m(r)| <\frac{C_5}{m^{1/3}},
\quad\forall m\in\setN.
\end{align*}
It follows that there exist $C_6>0$ such that $\|w_{n,m}\|_{\intf} \leq \frac{C_6}{m^{1/3}}$.
We conclude that
\begin{align*}
\|w_{n,m}\|_{\domcs}
\geq \frac{C_4^{1/2}}{\besseljz_{n,m}^{1/2}}
\geq \frac{C_4^{1/2}}{(1+\cAil\Cnl^{2/3}2^{-1/3})^{1/2}m^{1/2}}
&\geq \frac{C_4^{1/2}\Cnl^{1/6}}{(1+\cAil\Cnl^{2/3}2^{-1/3})^{1/2}m^{1/3}n^{1/6}}\\
&\geq \frac{C_4^{1/2}\Cnl^{1/6}}{(1+\cAil\Cnl^{2/3}2^{-1/3})^{1/2} C_6 n^{1/6}} 
\|w_{n,m}\|_{\intf},
\end{align*}
for all $m\geq\mstar$.
The remaining index set $\{(n,m)\in\setN^2\colon m=1,\dots,\mstar-1,n\in [\Cnl m, 2\Cnu m]\}$ is finite and hence it suffice to choose the constant $\Ceftrace$ sufficiently large.\\

\emph{The case $m=0$}.\quad
% \cite[p.~206]{Watson95}: p=2
% https://math.stackexchange.com/questions/1447137/conjectured-bound-on-bessel-function-of-the-first-kind?rq=1
For sufficiently large $\nstar>1$ we can use the asymptotic expansion of $\besselj_0$ and $\bessely_0$ \cite[p.~206]{Watson95} to proceed for $n>n\nstar$ as in the 2.~case.
For $n=1,\dots,\nstar$ we choose the constant $\Ceftrace$ sufficiently large.\\

\emph{Neumann boundary condition}.\quad
%\cite[10.22.38]{NISTDLMF}: $\int_0^1 r\besselj_m(\besseljz'_{m,n}r)^2 \, dr=\frac{\besseljz_{n,m}^2-m^2}{\besseljz_{n,m}^2} \frac{\besselj_m(\besseljz'_{m,n})^2}{2}$,
The treatment of this case is quite similar to the Dirichlet case.
However, a result for $(\Aipz_n)_{n\in\setN}$ corresponding to \eqref{eq:QuWong99} is not available.
Since we exploited \eqref{eq:QuWong99} only to obtain lower and upper bounds on $\besseljz_{n,m}$, it suffices to note that
\begin{align*}
\Aiz_{n} \inf_{l\geq2}\frac{\Aiz_{l-1}}{\Aiz_l} \leq \Aiz_{n-1} < \Aipz_n < \Aiz_n
\qquad\text{for } n=2,3,\dots,
\end{align*}
to obtain lower and upper bounds on $\besseljpz_{n,m}$.
\end{proof}
\begin{corollary}\label{cor:eftraces}
For each $m\in\Nz$ let $(\evsac_{n,m},(\bv_{n,m},w_{n,m}))\in\Rp\times \Xspace_m\setminus\{0\}$ be the eigenpairs to \eqref{eq:evp-sa-m}.
There exist $\Ceftrace,\delta_0>0$ such that if $\deltaem<\delta_0$, then for all $m\in\Nz$ and each $n\in\setN$ either $\bv_{n,m}=0$ or $w_{n,m}=0$, and in particular it holds that
$\|\bv_{n,m}\|_{\intf} \leq \Ceftrace n^{1/6} \|\bv_{n,m}\|_{\domcs}$ and
$\|w_{n,m}\|_{\intf} \leq \Ceftrace  n^{1/6}\|w_{n,m}\|_{\domcs}$.
\end{corollary}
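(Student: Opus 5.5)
Since $\Aop_{\epsilon,\mu}$ and $\Kop_{\epsilon,\mu}$ are block diagonal with respect to $\Xspace=\Vspace\times W$, the problem \eqref{eq:evp-sa-m} decouples into \eqref{eq:evp-bv} and \eqref{eq:evp-w} restricted to $\Vspace_m$ and $\Wm$. Every eigenpair of \eqref{eq:evp-sa-m} is therefore a combination of eigenvectors of the two diagonal blocks. My plan is first to show that the spectra of the two blocks are disjoint. Then each eigenvector can be chosen with one component vanishing, and the trace estimates reduce to \Cref{prop:eftraces}.

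\emph{Step 1 (separation).} By \Cref{lem:vector-to-scalar}, $\nu$ is an eigenvalue of \eqref{eq:evp-bv} on $\Vspace_m$ exactly when it is a Neumann eigenvalue of \eqref{eq:evps-scalar-vm}, i.e.\ a root $\tilde\besseljpz_{n,m}$ of $\det\AmatNeu_{\indL}$. The eigenvalues of \eqref{eq:evp-w} are the Dirichlet roots $\tilde\besseljz_{n,m}$. I will use \Cref{prop:evgap}: for $\deltaem<\delta_0$ the compound sequence $(\evsac_{n,m})_n$ has consecutive gaps at least $\cgapt>0$. Hence no Dirichlet root coincides with a Neumann root, and every eigenvalue of \eqref{eq:evp-sa-m} is simple and belongs to exactly one block. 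Simplicity within a block I take from the ODE structure, since for fixed $m$ the solution space of the radial problem is one-dimensional. If the two blocks shared an eigenvalue, the eigenspace of the coupled problem could contain mixed vectors $(\bv,w)$ with both parts nonzero. Disjointness rules this out, so each eigenvector has $\bv_{n,m}=0$ or $w_{n,m}=0$.

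\emph{Step 2 (traces).} The $w$-estimate is exactly \Cref{prop:eftraces}, with the index $n$ in the block enumeration at most the index in the compound enumeration. Because $n^{1/6}$ is monotone, the estimate transfers to the compound index. For $\bv_{n,m}=\mu\Curlg v_{n,m}$ I need a bound on the interface trace of $\bv$ in terms of $\|\bv\|_{\domcs}$. Where the norm on $\intf$ involves only the normal component, \eqref{eq:trace-bvm} gives $\nvt\cdot\mu^{-1}\bv_{n,m}=imv_{n,m}$, and this is the only trace used in \Cref{lem:Bbound}. For the norm $\|\bv_{n,m}\|_{\domcs}$ I will integrate by parts. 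Using \eqref{eq:evp-v} together with the transmission and boundary conditions, this gives $\|\mu^{-1/2}\bv\|_{\domcs}^2=\spl\mu\gradg v,\gradg v\spr_{\domcs}=\nu^2\|\mu^{1/2}v\|_{\domcs}^2$, and with $\|\gradg v\|^2\geq m^2\|r^{-1}v\|^2\geq m^2\|v\|^2$ it follows that $\|\bv\|_{\domcs}\gtrsim \max\{m,\nu\}\|v\|_{\domcs}$. Combining this with \Cref{prop:eftraces}, $\|m v\|_{\intf}\leq \Ceftrace n^{1/6}\,m\|v\|_{\domcs}\lesssim n^{1/6}\|\bv\|_{\domcs}$. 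The tangential part $\partial_r v$ on $\intf$ is handled the same way, using the Bessel representation and the bound of $|\besselj'_m|$ by $|\besselj_m|$ scales via \Cref{lem:besselOscZoneII} and \Cref{lem:besselTransZone}. For simplicity, though, I would state the corollary for the normal trace, which is all that is used later.

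\emph{Main obstacle.} I expect the delicate point to be the $\bv$-trace: relating the interface trace of $\bv$ to its $L^2$ norm uniformly in $m$ and $n$. It requires controlling $m\|v\|_{\domcs}$ by $\|\bv\|_{\domcs}$, which the Poincaré-type bound in the angular variable above provides. The separation in Step 1 should follow directly from \Cref{prop:evgap}. Finally, $\delta_0$ is taken as the minimum of the thresholds from \Cref{prop:evgap} and \Cref{prop:eftraces}.
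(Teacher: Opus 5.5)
Your proposal is correct and takes the paper's route. The paper's proof simply combines \Cref{prop:eftraces} with \Cref{lem:vector-to-scalar}, and in particular \eqref{eq:trace-bvm}, so the relevant $\bv$-trace is the normal trace $\nvt\cdot\mu^{-1}\bv_{n,m}$ that enters \Cref{lem:Bbound}. You also make explicit what the paper's one-line proof leaves implicit: the disjointness of the Dirichlet and Neumann spectra via \Cref{prop:evgap}, the bound $m\|v_{n,m}\|_{\domcs}\lesssim\|\bv_{n,m}\|_{\domcs}$ from the angular Poincar\'e inequality, and the comparison of block indices with compound indices.
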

\begin{proof}
Follows from \Cref{prop:eftraces} together with \Cref{lem:vector-to-scalar}.
\end{proof}

\section{The non-selfadjoint perturbation}\label{sec:perturbation-nsa}

\subsection{Local subordinate perturbations}

The main tool in the proof of our forthcoming main \Cref{thm:main} is the perturbation theory under a local subordinate condition of Mityagin and Siegl~\cite{MityaginSiegl19}.
For the comfort of the reader we provide in the next \Cref{lem:MS} a summary of the results applied from \cite{MityaginSiegl19} (accompanied by a slight extension~\cite[Thm.~A.9, Lem.~A.10]{DemkowiczHallaMelenk26}).
In order to stick to the notation of \cite{MityaginSiegl19}, but to avoid a conflict with previously used symbols, we equip all overlapping symbols in the context of \cite{MityaginSiegl19} with haceks.
\begin{lemma}\label{lem:MS}
Let $\check\Aop\colon \dom(\check\Aop)\subset\calH\to\calH$ be a closed, densely defined, (unbounded,) selfadjoint operator with compact resolvent in a separable, complex Hilbert space $\calH$.
Let $(\check\mu_k,\psi_k)_{k\in\setN}\in\setR\times\calH$ be the sequence of eigenpairs to $\check\Aop$ with $\calH$-normalized eigenelements:
\begin{align*}
\check\Aop \psi_k=\check\mu_k \psi_k, \quad \|\psi_k\|_{\calH}=1.
\end{align*}
We assume that the eigenvalues $(\check\mu_k)_{k\in\setN}$ of $\check\Aop$ are positive, simple and satisfy the gap condition
\begin{align*}
\exists\gamma,\kappa>0\colon\quad \check\mu_{k+1}-\check\mu_k\geq \kappa k^{\gamma-1} \quad\forall k\in\setN.
\end{align*}
Let $\check\Bop\colon \dom(\check\Bop)\subset\dom(\check\Aop)\to\calH$ satisfy the so-called local subordination condition
\begin{align*}
\exists \check\alpha\in\setR
\text{ with }
\begin{cases}
2\check\alpha+\gamma>\frac{3}{2}, &\text{if }\check\alpha\leq\frac{1}{2},\\
\gamma>\frac{1}{2}, &\text{if }\check\alpha>\frac{1}{2},
\end{cases}
\text{ and }
\MB>0\colon |\spl \check\Bop\psi_m,\psi_n\spr_\calH|\leq \frac{\MB}{m^{\check\alpha} n^{\check\alpha}}
\quad\forall m,n\in\setN.
\end{align*}
Let $(\check\lambda_k,\phi_k)\in\setC\times\calH$, $k\in\setN$ be the eigenpairs of $\check\Aop+\check\Bop$ with $\phi_k$ being defined as in \cite[(3.17)]{MityaginSiegl19} (where admissible).
Then there exist $\cgka,\Cgka>0$ such that if $\MB<\cgka$, then all eigenvalues $\check\lambda_k$, $k\in\setN$ are simple, satisfy
\begin{align*}
|\check\mu_k-\check\lambda_k| &\leq M_B\bigg(\frac{1}{k^{2\check\alpha}}
+\Cgka\Big(\frac{\log(ek)}{k^{2\check\alpha+\gamma-1}}\Big)^2
\frac{1}{k^{2\check\alpha}}\bigg), &&\text{if }\check\alpha\leq\frac{1}{2},\\
|\check\mu_k-\check\lambda_k| &\leq M_B\bigg(\frac{1}{k^{2\check\alpha}}
+\Cgka\frac{1}{k^{2\gamma}}
\frac{1}{k^{2\check\alpha}}\bigg), &&\text{if }\check\alpha>\frac{1}{2},\\
\end{align*}
the formula \cite[(3.17)]{MityaginSiegl19} is admissible for all $k\in\setN$
and $(\phi_k)_{k\in\setN}$ forms a Riesz (and Bari) basis in $\calH$ with
\begin{align*}
\sum_{k\in\setN} \|\psi_k-\phi_k\|_{\calH}^2<\Cgka \MB.
\end{align*}
The constants $\cgka,\Cgka$ depend only on $\gamma,\kappa,\check\alpha$.
In addition, let $\calX:=\dom\check\Aop^{1/2}$ with $\spl \cdot,\cdot\spr_\calX:=\spl \check\Aop^{1/2}\cdot,\check\Aop^{1/2}\cdot\spr_\calH$ and $\tilde\psi_k:=\frac{1}{\check\mu_k^{1/2}}\psi_k$, $\tilde\phi_k:=\frac{1}{\check\mu_k^{1/2}}\phi_k$.
Then $(\tilde\phi_k)_{k\in\setN}$ forms a Riesz (and Bari) basis in $\calX$ with
\begin{align*}
\sum_{k\in\setN} \|\tilde\psi_k-\tilde\phi_k\|_{\calX}^2<\Cgka \MB.
\end{align*}
\end{lemma}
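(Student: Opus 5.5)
The statement collects results from the literature, so the plan is to check that its hypotheses match those of the cited theorems, translate the notation, and only then verify the one genuinely new claim, the Bari basis property in the energy space $\calX$. The first part (simplicity and localization of the $\check\lambda_k$, admissibility of \cite[(3.17)]{MityaginSiegl19}, Riesz and Bari basis property of $(\phi_k)$ in $\calH$ with $\sum_k\|\psi_k-\phi_k\|_\calH^2<\Cgka\MB$) I will take from the main theorems of \cite{MityaginSiegl19}. Their setting is exactly a selfadjoint $\check\Aop$ with compact resolvent, simple eigenvalues with gaps $\kappa k^{\gamma-1}$, and a perturbation satisfying $|\spl\check\Bop\psi_m,\psi_n\spr|\le \MB m^{-\check\alpha}n^{-\check\alpha}$ under the stated constraint on $(\check\alpha,\gamma)$.

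To make the dependence of the constants on $(\gamma,\kappa,\check\alpha)$ alone transparent, I recall the mechanism. One takes contours $\Gamma_k$, circles around $\check\mu_k$ of radius proportional to $\kappa k^{\gamma-1}$, and expands the resolvent of $\check\Aop+\check\Bop$ as a Neumann series
\[
(\check\Aop+\check\Bop-z)^{-1}=(\check\Aop-z)^{-1/2}\bigl(I+(\check\Aop-z)^{-1/2}\check\Bop(\check\Aop-z)^{-1/2}\bigr)^{-1}(\check\Aop-z)^{-1/2}
\]
(with suitable square roots). The middle operator is estimated in the weighted $\ell^2$ sense via the local subordination. Its norm is bounded by $\MB$ times sums of the type $\sum_j j^{-2\check\alpha}|\check\mu_j-z|^{-1}$ on $\Gamma_k$, and these sums are uniformly bounded precisely under the condition on $\check\alpha,\gamma$. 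For $\MB<\cgka$ the Riesz projections $Q_k$ onto the perturbed spectral subspaces then have rank one, which gives simplicity. They also satisfy $\sum_k\|P_k-Q_k\|^2\lesssim \MB$, which yields the eigenvalue bounds and, by Kato's theorem, the Bari basis property. The normalization \cite[(3.17)]{MityaginSiegl19}, $\phi_k:=Q_k\psi_k$, is admissible for all $k$ once $\|P_k-Q_k\|<1$ uniformly.

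For the $\calX$-statement I follow \cite[Thm.~A.9, Lem.~A.10]{DemkowiczHallaMelenk26}. The family $(\tilde\psi_k)$ is orthonormal in $\calX$, and
\[
\|\tilde\psi_k-\tilde\phi_k\|_\calX^2=\check\mu_k^{-1}\sum_{j}\check\mu_j\,|\spl\phi_k-\psi_k,\psi_j\spr_\calH|^2 .
\]
For $j\neq k$ I obtain from the eigenvalue equation $(\check\Aop+\check\Bop)\phi_k=\check\lambda_k\phi_k$ that
\[
\spl\phi_k,\psi_j\spr=\frac{\spl\check\Bop\phi_k,\psi_j\spr}{\check\lambda_k-\check\mu_j}.
\]
I then insert $\phi_k=\psi_k+(\phi_k-\psi_k)$ and use local subordination to bound the numerator by $\MB j^{-\check\alpha}$ times a uniformly bounded weighted norm of $\phi_k$. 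The $j=k$ component is controlled by the normalization in \cite[(3.17)]{MityaginSiegl19}. Summing over $k$ then reduces to showing
\[
\sum_{k}\sum_{j\ne k}\frac{\check\mu_j}{\check\mu_k}\frac{j^{-2\check\alpha}k^{-2\check\alpha}}{|\check\mu_k-\check\mu_j|^{2}}<\infty
\]
with a bound depending only on $(\gamma,\kappa,\check\alpha)$. Once this sum is finite, Kato's theorem in $\calX$ gives the Riesz (Bari) basis property there.

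The main obstacle is this double sum. The extra weight $\check\mu_j/\check\mu_k$ grows for $j\gg k$, so it has to be absorbed by the growth of $|\check\mu_k-\check\mu_j|\gtrsim\kappa\,|j^\gamma-k^\gamma|$. My first approach is to split the sum into the ranges $j<k/2$, $k/2\le j\le 2k$ and $j>2k$, and to treat the diagonal band with the gap estimate $\kappa k^{\gamma-1}|j-k|$. If the off-diagonal range $j>2k$ fails to converge for small $\check\alpha$, I would instead bound the numerator using the operator form of \cite{DemkowiczHallaMelenk26}, namely $\check\Aop^{1/2}(\check\Aop-z)^{-1}\check\Bop$ estimated on $\Gamma_k$. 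This gains a factor $\check\mu_j^{-1/2}$ directly and avoids the weighted sum.
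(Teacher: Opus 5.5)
Your proposal takes the same route as the paper. The paper's proof likewise reads off the $\calH$-statements from \cite{MityaginSiegl19} (Thm.~3.2 with $j=1$, Rem.~3.3, Rem.~3.5 and (3.19)). It uses $\MB<\cgka$ in the same way as your uniform-in-$k$ contour argument, to rule out preasymptotic multiple eigenvalues, so that the patch $\Pi_0$ can be dropped and (3.17) is admissible for every $k$. It then quotes the $\calX$-statement from \cite[Thm.~A.9]{DemkowiczHallaMelenk26} without re-deriving it.

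Your additional double-sum check is therefore optional, but if you carry it out, three points need correcting. First, the range $j>2k$ you flag is harmless: since $\check\mu_j/(\check\mu_k|\check\mu_j-\check\mu_k|^2)=1/(\check\mu_k|\check\mu_j-\check\mu_k|)+1/|\check\mu_j-\check\mu_k|^2$, it only needs $2\check\alpha+\gamma>1$. Second, the hypothesis $2\check\alpha+\gamma>\frac32$ is actually used in the band $k/2\le j\le 2k$. Third, the $k=1$ row carries a factor $\check\mu_1^{-1}$, so a bound in terms of $(\gamma,\kappa,\check\alpha)$ alone also requires a lower bound such as $\check\mu_1\gtrsim\kappa$. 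This lower bound holds in the paper's application, where $\check\mu_1=\evsac_{1,m}^2\geq\cgapt^2=\kappa/2$.
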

\begin{proof}
The statement regarding the eigenvalues and the properties of $(\phi_k)_{k\in\setN}$ in $\calH$ is nothing else, then a specification of \cite{MityaginSiegl19}.
In particular, a review of the proof of \cite[Thm~3.2]{MityaginSiegl19} reveals the dependency of the appearing estimates on the constants $\MB$, $\cgka$ and $\Cgka$.
Note that by assumption on $\MB<\cgka$ being small enough we have no preasymptotic, non-simple eigenvalues $\mu_k$ (characterized by $N_0$ in \cite{MityaginSiegl19}).
Hence there are no eigenvalues $\check\lambda_k$ with preasymptotic behavior and we can omit the patch $\Pi_0$ in \cite[Prop.~3.1]{MityaginSiegl19} around such.
Note that for the eigenvalue estimate we apply \cite[Thm~3.2]{MityaginSiegl19} with $j=1$ and estimate $\lambda_n^{(1)}$ with \cite[Rem~3.3]{MityaginSiegl19}.
To estimate $\sum_{k\in\setN} \|\psi_k-\phi_k\|_\calH^2$ we follow \cite[Rem~3.5]{MityaginSiegl19} and apply \cite[(3.19)]{MityaginSiegl19}.
The statement regarding properties in $\calX$ is taken from \cite[Thm.~A.9]{DemkowiczHallaMelenk26}.
\end{proof}
\begin{lemma}\label{lem:BariGram}
Let $Y$ be a Hilbert space, $(\psi_{n})_{n\in\setN}$ be an orthonormal basis of $Y$ and  $(\phi_{n})_{n\in\setN}$, $\phi_n\in Y, n\in\setN$ with $\qclose:=\sum_{n\in\setN} \|\psi_n-\phi_n\|_Y^2<1/2$.
Then $(\phi_{n})_{n\in\setN}$ is a Riesz basis of $Y$ with Gram constants $\frac{1}{2(1+\qclose)}$, $\frac{2}{1-2\qclose}$, i.e.,
for each $u\in Y$ it holds that $u=\sum_{n\in\setN} \spl u,\phi_n\spr_Y \phi_n$ and
\begin{align*}
\frac{1}{2(1+\qclose)} \sum_{n\in\setN} |\spl u,\phi_n\spr_Y|^2
\leq \|u\|_Y^2 \leq \frac{2}{1-2\qclose} \sum_{n\in\setN} |\spl u,\phi_n\spr_Y|^2.
\end{align*}
\end{lemma}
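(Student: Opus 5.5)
The plan is to compare everything with the orthonormal basis $(\psi_n)_{n\in\setN}$, using only the triangle inequality in $\ell^2$ and Cauchy--Schwarz. The two inequalities in the statement are frame-type bounds for the coefficients $\spl u,\phi_n\spr_Y$, and I would derive them directly.

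\emph{Lower bound.} Write $\spl u,\phi_n\spr_Y=\spl u,\psi_n\spr_Y+\spl u,\phi_n-\psi_n\spr_Y$ and use $|a+b|^2\leq 2|a|^2+2|b|^2$. By Parseval and Cauchy--Schwarz,
\begin{align*}
\sum_{n\in\setN}|\spl u,\phi_n\spr_Y|^2
\leq 2\sum_{n\in\setN}|\spl u,\psi_n\spr_Y|^2+2\sum_{n\in\setN}\|u\|_Y^2\|\phi_n-\psi_n\|_Y^2
=2(1+\qclose)\|u\|_Y^2 .
\end{align*}
This is exactly the left inequality.

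\emph{Upper bound.} Symmetrically, $\|u\|_Y^2=\sum_{n}|\spl u,\psi_n\spr_Y|^2\leq 2\sum_n|\spl u,\phi_n\spr_Y|^2+2\qclose\|u\|_Y^2$. Since $\qclose<1/2$, the last term can be absorbed, which gives $\|u\|_Y^2\leq \frac{2}{1-2\qclose}\sum_n|\spl u,\phi_n\spr_Y|^2$.

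\emph{Riesz basis property.} Define $T\in\BLO(Y)$ by $T\psi_n:=\phi_n$, that is $T=I+K$ with $K\psi_n:=\phi_n-\psi_n$. The operator $K$ is Hilbert--Schmidt with $\|K\|\leq\|K\|_{HS}=\qclose^{1/2}<1$. Hence $T$ is bounded and boundedly invertible by a Neumann series. It follows that $(\phi_n)_{n\in\setN}=(T\psi_n)_{n\in\setN}$ is the image of an orthonormal basis under an isomorphism, hence a Riesz basis. Every $u\in Y$ then has the unique expansion $u=\sum_n\spl u,\tilde\phi_n\spr_Y\phi_n$ with the biorthogonal system $\tilde\phi_n:=T^{-*}\psi_n$.

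\emph{Expected obstacle: the expansion claim and the Gram constants.} The only delicate point is how to read ``$u=\sum_{n}\spl u,\phi_n\spr_Y\phi_n$''. Literally this holds only when $\tilde\phi_n=\phi_n$, which is not true in general. I would interpret it as the expansion with the biorthogonal coefficients above. The two inequalities then show that the coefficient map $u\mapsto(\spl u,\phi_n\spr_Y)_n$ is a norm-isomorphism onto its range with the stated constants. If the Gram constants are to be taken literally for the biorthogonal coefficients $c_n=\spl u,\tilde\phi_n\spr_Y$, I would transfer them through $T$. This gives $(1-\qclose^{1/2})^2\sum_n|c_n|^2\leq\|u\|_Y^2\leq(1+\qclose^{1/2})^2\sum_n|c_n|^2$. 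These constants are of the same quality, but they do not literally match $\frac{1}{2(1+\qclose)}$ when $\qclose$ is close to $1/2$. For the later application only the existence of uniform Gram constants depending on $\qclose$ matters, via \Cref{lem:compoundbasis}, so either version suffices.
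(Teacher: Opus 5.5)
Your two inequality chains are exactly the paper's proof, and the paper stops there. The rest of your argument is a genuine addition, not a different route.

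The displayed bounds only say that $(\phi_n)_{n\in\setN}$ is a frame, i.e.\ that $T^*u=\sum_{n}\spl u,\phi_n\spr_Y\psi_n$ is bounded and bounded below (so $T$ is onto). Injectivity of $T$, i.e.\ uniqueness of the expansion, is what your estimate $\|K\|\leq\|K\|_{HS}=\qclose^{1/2}<1$ supplies. The paper leaves this step implicit.

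Your reading of the expansion formula is also right. Having $u=\sum_n\spl u,\phi_n\spr_Y\phi_n$ for all $u$ means $TT^*=I$, which for invertible $T$ forces $(\phi_n)$ to be orthonormal. Already $\phi_1=(1+t)\psi_1$ with $0<t<2^{-1/2}$, $\phi_n=\psi_n$ for $n\geq2$, yields $(1+t)^2\psi_1$ for $u=\psi_1$.

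Your hedge on the constants can be made definite. Since $\|T\|=\|T^*\|$ and $\|T^{-1}\|=\|T^{-*}\|$, the proven frame bounds transfer to expansion coefficients as
$\frac{1-2\qclose}{2}\sum_n|a_n|^2\leq\|\sum_n a_n\phi_n\|_Y^2\leq 2(1+\qclose)\sum_n|a_n|^2$,
i.e.\ with the reciprocals of the stated constants. Your Neumann constants are even better, since $(1-\qclose^{1/2})^2-\frac{1-2\qclose}{2}=2(\qclose^{1/2}-\frac12)^2\geq0$ and $2(1+\qclose)-(1+\qclose^{1/2})^2=(1-\qclose^{1/2})^2\geq0$.

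Your lower constant is attained by $\phi_1=(1-t)\psi_1$, $\phi_n=\psi_n$ for $n\geq2$ (so $\qclose=t^2$). This shows that the stated lower Gram constant $\frac{1}{2(1+\qclose)}$ is false in general when read via the paper's definition of Gram constants (expansion coefficients): for $t=0.7$ one gets $\|\phi_1\|_Y^2=0.09<\frac{1}{2.98}$.

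The displayed inequalities themselves are correct. In the application in Theorem~\ref{thm:main}, where $\qclose<1/4$, your constants lie in $(1/4,4)$, so nothing downstream breaks.
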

\begin{proof}
The claim follows from
\begin{align*}
\|u\|_Y^2 = \sum_{n\in\setN} |\spl u,\psi_n\spr_Y|^2
\leq 2\sum_{n\in\setN} |\spl u,\phi_n\spr_Y|^2+|\spl u,\psi_n-\phi_n\spr_Y|^2
\leq 2\sum_{n\in\setN} |\spl u,\phi_n\spr_Y|^2+2\qclose\|u\|_Y^2
\end{align*}
and
\begin{align*}
\sum_{n\in\setN} |\spl u,\phi_n\spr_Y|^2
\leq 2\sum_{n\in\setN} |\spl u,\psi_n\spr_Y|^2 +|\spl u,\psi_n-\phi_n\spr_Y|^2
\leq 2\|u\|_Y^2 +2\qclose\|u\|_Y^2
= 2(1+\qclose) \|u\|_Y^2.
\end{align*}
\end{proof}
\begin{remark}
Note that the assumption $\sum_{n\in\setN} \|\psi_n-\phi_n\|_Y^2<1/2$ in \Cref{lem:BariGram} is not sharp and could be mitigated to $\sum_{n\in\setN} \|\psi_n-\phi_n\|_Y^2<1$.
\end{remark}
\begin{lemma}\label{lem:evsquared}
Let $\kappa>0$ and $(\lambda_n)_{n\in\setN}$, $\lambda_n>0$, $n\in\setN$ be a monotone increasing sequence satisfying $\lambda_1\geq\kappa$ and $\lambda_{n+1}-\lambda_n \geq \kappa$ for all $n\in\setN$.
Then $\lambda_{n+1}^2-\lambda_n^2 \geq 2\kappa^2 n$ for all $n\in\setN$.
\end{lemma}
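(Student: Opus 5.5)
We factor the difference of squares and combine two elementary lower bounds, one for each factor. Write
\[
\lambda_{n+1}^2-\lambda_n^2=(\lambda_{n+1}-\lambda_n)(\lambda_{n+1}+\lambda_n).
\]
By assumption the first factor satisfies $\lambda_{n+1}-\lambda_n\geq\kappa$, so it suffices to show $\lambda_{n+1}+\lambda_n\geq 2\kappa n$.

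For the second factor we first prove $\lambda_k\geq\kappa k$ for all $k\in\setN$ by induction. The base case $\lambda_1\geq\kappa$ is part of the hypothesis. For the step we use $\lambda_{k+1}\geq\lambda_k+\kappa\geq\kappa k+\kappa=\kappa(k+1)$. Consequently
\[
\lambda_{n+1}+\lambda_n\geq\kappa(n+1)+\kappa n=\kappa(2n+1)\geq 2\kappa n.
\]

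Multiplying the two bounds, which is legitimate since both factors are positive, gives $\lambda_{n+1}^2-\lambda_n^2\geq\kappa\cdot 2\kappa n=2\kappa^2 n$. This is the claim; in fact it yields the slightly stronger bound $\kappa^2(2n+1)$.

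There is no real obstacle here. The only point requiring care is the induction giving the linear growth $\lambda_k\geq\kappa k$, which rests on the lower bound $\lambda_1\geq\kappa$ for the first term. This matches the form of \Cref{prop:evgap}, where $|\evsac_{1,m}|\geq\cgapt$ is established alongside the gap estimate. In the subsequent application, \Cref{lem:evsquared} will convert the uniform gaps of $\nu$ from \Cref{prop:evgap} into gaps of $\nu^2$ of order $k$. This is the gap condition required in \Cref{lem:MS}, with $\gamma=2$ and $\kappa$ replaced by $2\cgapt^2$.
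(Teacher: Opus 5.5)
Your proof is correct and follows essentially the same route as the paper: factor the difference of squares, bound the first factor below by $\kappa$, and bound the second factor using the linear growth $\lambda_n\geq\kappa n$, which comes from iterating the gap starting at $\lambda_1\geq\kappa$. The only cosmetic difference is that the paper uses $\lambda_{n+1}+\lambda_n\geq 2\lambda_n$ directly, whereas you keep the extra $\kappa$ and obtain the marginally sharper bound $\kappa^2(2n+1)$.
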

\begin{proof}
Note that $\lambda_{n+1}=\lambda_{n+1}-\lambda_{n}+\lambda_{n}\geq \kappa+\lambda_n$ and hence $\lambda_n\geq\kappa n$ for all $n\in\setN$.
Now we compute that
$
\lambda_{n+1}^2-\lambda_n^2=(\lambda_{n+1}-\lambda_n)(\lambda_{n+1}+\lambda_n)
\geq \kappa(\lambda_{n}+\lambda_n)
\geq 2\kappa^2 n,
$
which proves the claim.
\end{proof}

\subsection{Main results}

\begin{theorem}\label{thm:main}
Let the assumptions specified in \Cref{subsec:setting} be satisfied.
There exists $\delta_0>0$, such that if $\deltaem<\delta_0$, then:
If $(\alpha,(\Et,\Ez,\Ht,\Hz))$ is a solution to \eqref{eq:evp-full}, then $\alpha\neq0$ and $(-\alpha,(\Et,-\Ez,-\Ht,\Hz))$ is also a solution to \eqref{eq:evp-full}.
So let
\begin{align*}
(\alpha_k,(\Etk,\Ezk,\Htk,\Hzk)),\quad (-\alpha_k,(\Etk,-\Ezk,-\Htk,\Hzk)),\quad k\in\setN
\end{align*}
(with $\alpha_k\neq-\alpha_{k'}$ for all $k,k'\in\setN$)
be all solutions to \eqref{eq:evp-full}.
\begin{enumerate}[I.]
\item\label{enum:I} Each $\pm\alpha_k$, $k\in\setN$ is simple.
\item\label{enum:II} If $\omega\in\setR\setminus\{0\}$, then $\alpha_k^2\in\setR$ for all $k\in\setN$.
\item\label{enum:III} If $\omega\in\setR\setminus\{0\}$ and $\beta_k:=i\alpha_k\in\setR$ for a $k\in\setN$, then $\frac{\beta_k(\omega)}{\omega}\frac{d\beta_k(\omega)}{d\omega}>0$.
\item\label{enum:IV} After a normalization $(\Etk)_{k\in\setN}$ forms a Riesz basis in $\Hspace_0(\curlg;\domcs)$.
\item\label{enum:V} If $\mu=\mu_0$, then
after a normalization $(\Etk)_{k\in\setN}$ forms a Riesz basis in $\Hspace_0^{-1/2}(\curlg;\domcs)$.
\item\label{enum:VI} After a normalization $(\Htk)_{k\in\setN}$ forms a Riesz basis in $\Hspace(\curlg;\domcs)$.
\item\label{enum:VII} If $\epsilon=\epsilon_0$, then
after a normalization $(\Htk)_{k\in\setN}$ forms a Riesz basis in $\Hspace^{-1/2}(\curlg;\domcs)$.
\end{enumerate}
\end{theorem}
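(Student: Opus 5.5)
The plan is to work mode-by-mode. By \Cref{subsec:separation} the problem \eqref{eq:evp} decouples into the family \eqref{eq:evp-m}. For each $m$, I apply \Cref{lem:MS} to the pencil $\Aop_m+\Bop_m-\nu^2\Kop_m$ and then glue the modes with \Cref{lem:compoundbasis}.

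\textbf{Setting up \Cref{lem:MS} for fixed $m$.} Take $\calH_m$ to be $\Xspace_m$ equipped with the $\Kop$-weighted $L^2$ inner product $\spl\Kop\cdot,\cdot\spr$. Take $\check\Aop:=\Kop_m^{-1}\Aop_m$ and $\check\Bop:=\Kop_m^{-1}\Bop_m$. By \Cref{lem:vector-to-scalar} and \Cref{cor:eftraces}, the unperturbed eigenfunctions are the decoupled pairs $(\bv_{n,m},0)$ and $(0,w_{n,m})$, with eigenvalues $\check\mu_n=\evsac_{n,m}^2$.

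\textbf{Gap condition.} \Cref{prop:evgap} gives $\evsac_{1,m}\ge\cgapt$ and $\evsac_{n+1,m}-\evsac_{n,m}\ge\cgapt$. \Cref{lem:evsquared} then yields $\check\mu_{n+1}-\check\mu_n\ge 2\cgapt^2 n$. So the gap condition holds with $\gamma=2$ and $\kappa=2\cgapt^2$, uniformly in $m$.

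\textbf{Local subordination.} For normalized eigenfunctions I bound $|\spl\check\Bop\psi_j,\psi_k\spr|$ using \Cref{lem:Bbound}. The interface traces $\|w\|_\intf$ are controlled by $\Ceftrace n^{1/6}\|w\|_\domcs$ (\Cref{cor:eftraces}). The traces $\|\nvt\cdot\mu^{-1}\bv\|_\intf$ are handled via \eqref{eq:trace-bvm}: they equal $|m|\,\|v_m\|_\intf$. Since $\|\bv\|_\domcs^2\approx\evsac^2\|v\|^2_\domcs$ and $\evsac_{n,m}\gtrsim m$, the factor $m$ is absorbed by the $L^2$ normalization. This gives
\[
|\spl\check\Bop\psi_j,\psi_k\spr|\le C\deltaemt\, j^{1/6}k^{1/6},
\]
that is, $\check\alpha=-1/6$. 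The admissibility condition requires $2\check\alpha+\gamma=-1/3+2=5/3>3/2$, so it holds. Hence $\MB=C\deltaemt\le C\deltaem$, which is below $\cgka$ for $\delta_0$ small, and this choice is uniform in $m$.

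\textbf{Conclusions from \Cref{lem:MS}.} The eigenvalues $\nu^2$ are simple. The perturbed eigenfunctions form a Bari basis with $\sum\|\psi_k-\phi_k\|^2<\Cgka\MB<1/2$, both in $\calH_m$ and in the energy space $\calX_m=\dom\check\Aop^{1/2}$. \Cref{lem:BariGram} then gives Gram constants independent of $m$, and \Cref{lem:compoundbasis} yields a Riesz basis of $\Xspace$.

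\textbf{Transfer to the statement.} Through \Cref{lem:evp-equi}, the map $(\bv,w)\mapsto\bv+\gradg w/\omega$ is an isomorphism from $\Xspace$ onto $\Hspace_0(\curlg;\domcs)$ by the topological decomposition, which gives \ref{enum:IV}.

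For the claims about $\alpha$: the map $(\alpha,\Et,\Ez,\Ht,\Hz)\mapsto(-\alpha,\Et,-\Ez,-\Ht,\Hz)$ is checked directly on \eqref{eq:modal-full}. To get $\alpha\neq0$, note that $\nu^2=\omega^2\epsilon_0\mu_0$ must be excluded. Since $\omega$ is not a cut-off frequency, $\omega^2\epsilon_0\mu_0$ lies at positive distance from the unperturbed spectrum $\{\evsac_{n,m}^2\}$. The eigenvalue estimate of \Cref{lem:MS}, $|\check\mu_k-\check\lambda_k|\lesssim\MB k^{1/3}$, keeps the perturbed spectrum away from this point for low $k$, while the gaps of order $k$ handle large $k$.

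Simplicity of $\pm\alpha_k$ (\ref{enum:I}) follows from simplicity of $\nu^2$.

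For \ref{enum:II}: when $\omega$ is real, $\Bop$ is real and the eigenvalues are simple and isolated in disjoint discs around the real $\check\mu_k$. Complex conjugation maps eigenpairs to eigenpairs, so each $\nu_k^2$ equals its own conjugate and is real.

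For \ref{enum:III}: I would differentiate the identity $\nu^2\spl\Kop\phi,\phi^*\spr=\spl(\Aop+\Bop)\phi,\phi^*\spr$ in $\omega$, using that the eigenvalues are simple and that the left and right eigenvectors are close to $\psi_k$. This yields the sign of $\beta\,d\beta/d\omega$, with the dominant contribution $\omega\epsilon_0\mu_0>0$.

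\textbf{Remaining statements.} For \ref{enum:V} and \ref{enum:VII}, I would use that $\Hspace_0^{-1/2}(\curlg)$ corresponds, in the decomposition, to the midpoint between $\calH$ and $\calX$. When $\mu=\mu_0$, the component $\Kop_{11}$ is a multiple of the identity, so the weighted space $\dom\check\Aop^{1/4}$ matches this trace norm; a Bari estimate there follows by interpolating the two Bari statements of \Cref{lem:MS}. The $\Htk$ claims \ref{enum:VI} and \ref{enum:VII} follow symmetrically, from the magnetic reformulation or from $\Ht=\frac{1}{i\omega\mu}(\dots)$, using the dual potential space $V$.

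\textbf{Main obstacle.} I expect the uniform-in-$m$ local subordination bound to be the main obstacle, in particular the handling of the factor $m$ from \eqref{eq:trace-bvm} against the mixed terms $\|w\|_\intf\|\nvt\cdot\mu^{-1}\bv\|_\intf$. My first approach is the normalization $\|\bv_{n,m}\|_\domcs\sim\evsac_{n,m}\|v_{n,m}\|_\domcs$ together with $\evsac_{n,m}\ge m$, which should produce $\check\alpha=-1/6$.
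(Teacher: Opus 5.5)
Your proposal is correct in outline. For claims I, II, IV and VI it follows the paper's proof: the same mode-wise application of \Cref{lem:MS} with $\gamma=2$, $\kappa=2\cgapt^2$, $\check\alpha=-1/6$, then \Cref{lem:BariGram} and \Cref{lem:compoundbasis}, the decomposition $\bv+\gradg w/\omega$, and the one-eigenvalue-per-patch argument.

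Your explicit absorption of the factor $m$ is more careful than the paper. The paper simply cites \Cref{cor:eftraces}, which is stated for the full trace $\|\bv_{n,m}\|_{\intf}$. You use $\|\nvt\cdot\mu^{-1}\bv\|_{\intf}\lesssim |m|\,\|v\|_{\intf}$ together with $\|\bv\|_{\domcs}\approx\evsac_{n,m}\|v\|_{\domcs}$ and $\evsac_{n,m}\gtrsim|m|$. The last bound is immediate from the Rayleigh quotient, $\evsac_{n,m}^2\geq m^2\mumin/\mumax$, since $r\leq 1$.

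One fine point that you and the paper both gloss over: complex conjugation maps $\Xspace_m$ onto $\Xspace_{-m}$, not onto itself. In II you should therefore compose it with the reflection $(x,y)\mapsto(x,-y)$, which commutes with $\Aop,\Bop,\Kop$. That reflection also shows $\Xspace_m$ and $\Xspace_{-m}$ have identical spectra, so the simplicity in I holds within each $\Xspace_m$.

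You genuinely deviate in three places.

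\textbf{(i) $\alpha\neq0$.} The paper's route is direct. At $\alpha=0$, \eqref{eq:modal-full} splits into exactly the two cut-off problems of \Cref{def:cut-off}, and \Cref{lem:cutoff} excludes them. Your spectral-localization argument also works, but it needs more input. The heterogeneous non-cut-off property says nothing directly about the spectrum of \eqref{eq:evp-sa}, which is a different operator pair. So you have to pass through the homogeneous assumption and \Cref{lem:ev-conv}, and you still need \Cref{lem:cutoff} for \Cref{lem:evp-equi} to hold.

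\textbf{(ii) Claim III.} The paper tests \eqref{eq:EH-curl-div} with $(\Et,\Ht)$ and differentiates in $\omega$. For real $\omega,\beta$ that system is symmetric, so only the eigenfunction itself enters and one gets an explicit quotient. Your Hellmann--Feynman route on the non-selfadjoint pencil \eqref{eq:evp} needs two further ingredients:
\begin{itemize}
\item a left eigenvector $\phi^*$ with $\spl\Kop\phi,\phi^*\spr$ bounded away from zero, obtained by applying \Cref{lem:MS} to the adjoint perturbation;
\item the observation that only finitely many $(n,m)$ propagate, because $\spl\partial_\omega\Bop\phi,\phi^*\spr$ is bounded only by $\deltaemt$ times trace norms that grow with $n$.
\end{itemize}
Also, the positive leading term in $\frac{\beta}{\omega}\frac{d\beta}{d\omega}=\epsilon_0\mu_0-\frac{1}{2\omega}\frac{d\nu^2}{d\omega}$ is $\epsilon_0\mu_0$, not $\omega\epsilon_0\mu_0$ (the latter is negative for $\omega<0$).

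\textbf{(iii) Claim V.} Interpolating the two Bari statements is cleaner than the paper's interpolation of norm equivalences. The inequality $\|\check\Aop^{1/4}u\|^2\leq\|u\|_{\calH}\|\check\Aop^{1/2}u\|_{\calH}$ plus Cauchy--Schwarz gives $\sum_k\|\check\mu_k^{-1/4}(\psi_k-\phi_k)\|^2_{\dom\check\Aop^{1/4}}<\Cgka\MB$, so \Cref{lem:BariGram} applies directly. However, the role of $\mu=\mu_0$ is not that $\Kop_{11}$ is scalar. It is that $\Vspace=\Hspace_0(\curlg;\domcs)\cap\Hspace(\divg0;\domcs)$ is then $\Hspace^1$-regular. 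That regularity is what identifies $[\calH,\calX]_{1/2}$ with $\Hspace_0^{-1/2}(\curlg;\domcs)$ via the Buffa--Ciarlet decomposition used in the paper. For non-constant $\mu$ the normal component of $\bv$ jumps across $\intf$, and this identification is no longer available.
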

\begin{proof}
\textit{Preliminary.}\quad
By assumption $\omega$ is not a cut-off frequency for the homogeneous case $\epsilon=\epsilon_0$, $\mu=\mu_0$.
Due to \Cref{lem:cutoff} there exists $\delta_1>0$ such that if $\deltaem<\delta_1$, then $\omega$ is not cut-off wavenumber for the heterogeneous case.
Henceforth let $\deltaem<\delta_1$.
Thus $\alpha\neq0$.
If $(\alpha,(\Et,\Ez,\Ht,\Hz))$ is a solution to \eqref{eq:evp-full}, then $(\Et,-\Ez,-\Ht,\Hz))\neq0$ and it can easily be checked that $(-\alpha,(\Et,-\Ez,-\Ht,\Hz))$ solves \eqref{eq:evp-full}.\\

\textit{Separated eigenvalue problems.}\quad
Let $\Hspace(\divg\mu^{-1}0;\domcs):=\{\bv\in\bL^2(\domcs)\colon \divg(\mu^{-1}\bv)=0\}$,$\calH:=\Hspace(\divg\mu^{-1}0;\domcs)\times L^2(\domcs)$ and $\calX:=\Xspace$.
For each $m\in\setZ$ consider the following.
Let $\calH_m:=\Hspace_m(\divg\mu^{-1}0;\domcs)\times L^2_{m}(\domcs)$ and $\calX_m:=\Xspace_m$.
The operators $\Aop_m,\Bop_m\in\BLO(\calX_m)$ induce unbounded sesquilinear forms $\ases_m(\cdot,\cdot),\bses_m(\cdot,\cdot)$ on $\calH_m\times\calH_m$ with domains $\dom(\ases_m):=\Xspace_m\subset\dom(\bses_m)\subset\calH_m$.
Following \cite[Ch.~6.2]{Kato95}, \cite{MityaginSiegl19} $\ases_m(\cdot,\cdot)$, $\bses_m(\cdot,\cdot)$ induce unbounded operators $\check\Aop_m\colon\dom(\check\Aop_m)\subset\calH_m\to\calH_m$,
$\check\Bop_m\colon\dom(\Bop_m)\subset\calH_m\to\calH_m$
with $\dom(\check\Aop_m^{1/2})=\dom(\ases_m)$.
In a similar way $\Kop_m\in\BLO(\calX_m)$ induces a bounded sesquilinear form $\kses_m(\cdot,\cdot)\colon\calH_m\times\calH_m\to\setC$, which induces the identity operator $\check\Kop_m=I_{\calH_m}$.
Let $\cgapt,\delta_2>0$ be as in \Cref{prop:evgap}, $\delta_3:=\min\{\delta_1,\delta_2\}$ and $\deltaem<\delta_3$ henceforth.
Set $\gamma:=2$, $\kappa:=2\cgapt^2$ and $\check\alpha:=-1/6$.
Let $\Ceftrace,\delta_4>0$ be as in \Cref{cor:eftraces} and $\CBop,\delta_5>0$ be as in \Cref{lem:Bbound}.
Let $\delta_6\in(0,\min\{\delta_3,\delta_4,\delta_5\})$ be such that $6\Ceftrace\CBop(\epsilon_0+\mu_0)\delta_6<\cgka$ (with $\cgka$ as in \Cref{lem:MS}) and $\deltaem<\delta_6$ henceforth.
Let $(\evsac_{n,m}^2,\psi_{n,m})_{n\in\setN}$, $\psi_{n,m}\in\calX_m$ be the normalized eigenpairs of $\check\Aop_m$.
Then by means of \Cref{lem:evsquared} the assumptions of \Cref{lem:MS} are satisfied.
Thus the eigenpairs $(\check\lambda_{n,m},\phi_{n,m})_{n\in\setN}$ of $\check\Aop_m+\check\Bop_m$ (with $\phi_{n,m}$ as defined in \Cref{lem:MS}) satisfy:
each eigenvalue $\check\lambda_{n,m}$, $n\in\setN$ is simple,
\begin{align*}
|\evsac_{n,m}^2-\check\lambda_{n,m}|
&\leq 6\Ceftrace\CBop(\epsilon_0+\mu_0)\deltaem \bigg(1+\Cgka\bigg(\frac{\log(en)}{n}\bigg)^2\bigg)\\
&\leq 6\Ceftrace\CBop(\epsilon_0+\mu_0) (1+\Cgka) \deltaem,
\end{align*}
and $(\phi_{n,m})_{n\in\setN}$ and $(\evsac_{n,m}^{-1}\phi_{n,m})_{n\in\setN}$ form a Riesz (even Bari) basis in $\calH_m$ and $\calX_m$ respectively with
\begin{align*}
\sum_{n\in\setN} \|\psi_{n,m}-\phi_{n,m}\|_{\calH_m}^2&<6 \Cgka \Ceftrace\CBop(\epsilon_0+\mu_0)\deltaem,\\
\sum_{n\in\setN} \|\evsac_{n,m}^{-1}\psi_{n,m}-\evsac_{n,m}^{-1}\phi_{n,m})\|_{\calX_m}^2&<6 \Cgka \Ceftrace\CBop(\epsilon_0+\mu_0)\deltaem.
\end{align*}
Since $\alpha_{n,m}^2=\check\lambda_{n,m}-\omega^2\epsilon_0\mu_0$, the first claim~\ref{enum:I} follows.\\

\textit{\ref{enum:II}.}\quad
If $\omega\in\setR\setminus\{0\}$ and $\alpha_{n,m}$ is an eigenvalue to \eqref{eq:evp-full} with $\alpha_{n,m}^2\in\setC\setminus\setR$, then $\ol{\alpha}_{n,m}\neq\alpha_{n,m}$ is an eigenvalue too.
However, in the derivation of \Cref{lem:MS} each patch $\Pi_k$ (around $\besseljzc_{n,m}^2$) defined in \cite[(3.1)]{MityaginSiegl19} contains only a single eigenvalue, which is a contradiction to $|\besseljzc_{n,m}^2-\alpha^2_{n,m}|=|\besseljzc_{n,m}^2-\ol{\alpha}^2_{n,m}|$.
Note that the patch $\Pi_0$ is omitted as discussed in the proof of \Cref{lem:MS}.
\\

\textit{\ref{enum:III}.}\quad
For brevity we surpress the index $k$.
We compute by means of \eqref{eq:EH-curl-div} that
\begin{align*}
\frac{\beta}{\omega\epsilon_0\mu_0} \frac{d\beta(\omega)}{d\omega}
&=\frac{\spl\epsilon\Et,\Et\spr_\domcs+\spl\mu\Ht,\Ht\spr_\domcs - \frac{\beta}{\omega\epsilon_0\mu_0} \Re(\spl(\epsilon\mu-\epsilon_0\mu_0)\rotm\Ht,\Et\spr_\domcs)}%
{\spl\epsilon\Et,\Et\spr_\domcs+\spl\mu\Ht,\Ht\spr_\domcs +\frac{\omega}{\beta} \Re(\spl(\epsilon\mu-\epsilon_0\mu_0)\rotm\Ht,\Et\spr_\domcs)}
\end{align*}
Hence, we can find $\delta_7\in(0,\delta_6)$ such that the former denominator is actually positive for all $\deltaem\in(0,\delta_7)$ (which we assume henceforth) and
\begin{align*}
\frac{\beta}{\omega\epsilon_0\mu_0} \frac{d\beta(\omega)}{d\omega}\in \bigg[
\frac{1-\deltaemt\beta/\omega}%
{1+\deltaemt\omega\epsilon_0\mu_0/\beta},
\frac{1+\deltaemt\beta\epsilon_0\mu_0/\omega}%
{1-\deltaemt\omega\epsilon_0\mu_0/\beta} \bigg],
\end{align*}
from which the claim follows.
\\

\textit{\ref{enum:IV}.}\quad
Let $\delta_7\in(0,\delta_6)$ be such that $6 \Cgka \Ceftrace\CBop(\epsilon_0+\mu_0)\delta_7<1/4$ and $\deltaem<\delta_7$ henceforth.
Then \Cref{lem:BariGram} and \Cref{lem:compoundbasis} yield that $(\phi_{n,m})_{n\in\setN,m\in\setZ}$ forms a Riesz basis in $\calH_m$
and $(\evsac_{n,m}^{-1}\phi_{n,m})_{n\in\setN,m\in\setZ}$ forms a Riesz basis in $\calX_m$, both times with Gram constants $c>1/4$ and $C<4$.
Note that $\frac{|\lambda_{n,m}|}{|\evsac_{n,m}|}\to1$ as $n\to+\infty$ or $m\to\pm\infty$.
Hence we can trade a normalization constant for $\evsac_{n,m}^{-1}$.
Recall the bijection
$\Vspace_m\times \Wm\to\Hspace_{0m}(\curlg;\domcs)\colon \phi_m=(\bv_m,w_m)^\top\mapsto \bv_m+\gradg w_m$.
It follows with $\phi_{n,m}=(\bv_{n,m},w_{n,m})^\top$ and \Cref{lem:compoundbasis} that after normalization $((\Et)_{n,m}=\bv_{n,m}+\gradg w_{n,m})_{n\in\setN,m\in\setZ}$ forms a Riesz basis in $\Hspace_0(\curlg;\domcs)$.
\\

\textit{\ref{enum:V}.}\quad
Let $m\in\setZ$ and $(\phi_{n,m})_{n\in\setN}$ be $\calH_m$-normalized.
Then
\begin{align*}
\frac{1}{4} \sum_{n\in\setN} |\spl \phi_m,\phi_{n,m} \spr_\calH|^2
\leq \|\phi_m\|_\calH^2
\leq 4 \sum_{n\in\setN} |\spl \phi_m,\phi_{n,m} \spr_\calH|^2
\end{align*}
and
\begin{align*}
\frac{1}{4} \sum_{n\in\setN} \besseljzc_{n,m}^2 |\spl \phi_m,\phi_{n,m} \spr_\calH|^2
\leq \|\phi\|_\calX^2
\leq 4 \sum_{n\in\setN} \besseljzc_{n,m}^2 |\spl \phi_m,\phi_{n,m} \spr_\calH|^2
\end{align*}
for all $\phi_m\in\calX_m$.
By Hilbert space interpolation \cite{BerghLoefstroem76} the norm of the space $[\calH_m,\calX_m]_{1/2}$ is equivalent (with $m$-uniform bounds) to $\sqrt{\sum_{n\in\setN} \besseljzc_{n,m} |\spl \cdot,\phi_{n,m} \spr_\calH|^2}$ and after normalization $(\phi_{n,m})_{n\in\setN}$ forms a Riesz basis of $[\calH_m,\calX_m]_{1/2}$.
Recall that $\calH=\Hspace(\divg0;\domcs)\times L^2(\domcs)$ and $\calX=\big(\Hspace_0(\curlg;\domcs)\cap\Hspace(\divg0;\domcs)\big)\times H^1_0(\domcs)$.
Note that on $\Vspace=\Hspace_0(\curlg;\domcs)\cap\Hspace(\divg0;\domcs)$ the norms $\|\curlg\cdot\|_{\domcs}$ and $\|\cdot\|_{\Hspace^1(\domcs)}$ are equivalent (see, e.g., \cite{ABDG98} for the 3D setting).
It follows that $[\calH,\calX]_{1/2}\subset \Hspace^{1/2}(\domcs)\times H^{1/2}(\domcs)$.
To adhere the essential boundary conditions we note that it is well known that $[L^2(\domcs),H^1_0(\domcs)]_{1/2}=H^{1/2}_{00}(\domcs)$ \cite{AgranovichBook}.
On the other hand on the spaces $\Hspace^s(\curlg;\domcs):=\{\bv\in\Hspace^s(\domcs)\colon\curlg\bv\in H^s(\domcs)\}$, $s\in[0,-1/2]$ there exists a bounded tangential trace operator $\trtang\in\BLO\big(\Hspace^s(\curlg;\domcs),H^{s-1/2}(\partial\domcs)\big)$, see, e.g., \cite[Prop.~3.14]{BuffaCiarlet01a} or \cite[p.~115-117]{AssousCiarletLabrunie18}, defined by
\begin{align*}
\spl \trtang\bv,w \spr_{H^{s-1/2}(\partial\domcs)\times H^{-s+1/2}(\partial\domcs)}:=\spl \curlg\bv,w_{\domcs} \spr_{H^{s}(\domcs)\times H^{-s}(\domcs)}
-\spl \bv,\Curlg w_{\domcs} \spr_{H^{s}(\domcs)\times H^{-s+1}(\domcs)},
\end{align*}
where $w_{\domcs}\in H^{-s+1}(\domcs)$ is a lifting of $w\in H^{-s+1/2}(\partial\domcs)$.
By continuity it follows that
\begin{align*}
[\Hspace(\divg0;\domcs),\Vspace]_{1/2}&=
\{\Curlg v\colon v\in \calH^\mathrm{BC}(\domcs)\},\\
\calH^\mathrm{BC}(\domcs)&:=\{H^{3/2}(\domcs)\colon \trtang\Curlg v=0\}.
\end{align*}
It remains to note that $\Hspace_0^{-1/2}(\curlg;\domcs)=\Curlg\calH^\mathrm{BC}(\domcs) \oplus \gradg H^{1/2}_{00}(\domcs)$ \cite[(55)]{BuffaCiarlet01b}, see also \cite[p.~115-117]{AssousCiarletLabrunie18}.
\\

\textit{\ref{enum:VI}+\ref{enum:VII}.}\quad
We derive the same results for $\Ht$ by trading $\epsilon,\mu,\Et,\Ez$ for $\mu,\epsilon,\Ht,\Hz$ and adapting the boundary conditions.
\end{proof}
\begin{remark}\label{rem:LtwoBasis}
We do not expect that the basis properties \ref{enum:IV}-\ref{enum:VI} of \Cref{thm:main} could possibly be extended to $\bL^2(\domcs)$.
Indeed, for the setting of two materials ($\indL=2$) \cite[Thm.~2.2.3]{SheSmirSmo22} proves that $(\Etk,\Htk)_{k\in\setN}\cup(\Etk,-\Htk)_{k\in\setN}$ does not form a basis in $\bL^2(\domcs)\times\bL^2(\domcs)$ (and hence in particular no Riesz basis).
Making a two-by-two block transformation it follows that $(\Etk,0)_{k\in\setN}\cup(0,\Htk)_{k\in\setN}k$ is not a Riesz basis of $\bL^2(\domcs)\times\bL^2(\domcs)$.
Thus both $(\Etk)_{k\in\setN}$ and $(\Htk)_{k\in\setN}$ do not form a Riesz basis of $\bL^2(\domcs)$.
\end{remark}

\appendix
\section{Location of wavenumbers for general geometries}\label{sec:app}

We include an observation concerning the location of the wavenumbers under far more general assumptions on $\domcs$ and $\epsilon,\mu$ than made in \Cref{subsec:setting}.
To this end we derive a particular equation satisfied by the eigenmodes.
We eliminate $\Ez$ and $\Hz$ from \eqref{eq:modal-full} to obtain
\begin{subequations}\label{eq:EH-curl}
\begin{align}
\label{eq:E}
\Curlg(\mu^{-1} \curlg \Et) -\omega^2\epsilon \Et &= \omega\beta \rotm \Ht \quad\text{in }\domcs,\\
\label{eq:H}
\Curlg(\epsilon^{-1} \curlg \Ht) -\omega^2\mu \Ht &= \omega\beta \rotm \Et\quad\text{in }\domcs,\\
\label{eq:E-bc}
\tv\cdot \Et &=0\quad\text{on }\partial\domcs,\\
\label{eq:H-bc}
\curlg \Ht &=0\quad\text{on }\partial\domcs.
\end{align}
\end{subequations}
%The weak formulation of \eqref{eq:EH-curl} reads:
%Find $(\beta,(\Et,\Ht))\in \setC\times(\Hspace_0(\curlg;\domcs)\times \Hspace(\curlg;\domcs))\setminus\{0\}$ such that
%\begin{align}\label{eq:EH-curlg-weak}
%\begin{aligned}
%\spl \mu^{-1}\curlg\Et,\curlg\Et\tf\spr_\domcs
%-\omega^2\spl \epsilon\Et,\Et\tf\spr_\domcs
%&+\spl \epsilon^{-1}\curlg\Ht,\curlg\Ht\tf\spr_\domcs
%-\omega^2\spl \mu\Ht,\Ht\tf\spr_\domcs\\
%&=\beta\omega (\spl\rotm\Ht,\Et\tf\spr_\domcs+\spl \Et,\rotm\Ht\tf\spr_\domcs)
%\end{aligned}
%\end{align}
%for all $(\Et\tf,\Ht\tf)\in \Hspace_0(\curlg;\domcs)\times \Hspace(\curlg;\domcs)$.
Now, applying $\divg$ to \eqref{eq:E} and \eqref{eq:H} we compute that
\begin{align}\label{eq:curlgs}
-\omega^2 \divg(\epsilon\Et)=-\omega \beta\curlg\Ht \quad\text{and}\quad
-\omega^2 \divg(\mu\Ht)=\omega \beta\curlg\Et.
\end{align}
We multiply \eqref{eq:H} with $-\frac{\beta}{\omega}\epsilon R$ and insert \eqref{eq:curlgs} to obtain
\begin{subequations}\label{eq:EH-divg}
\begin{align}\label{eq:E-divg}
-\epsilon\nabla \epsilon^{-1} \divg \epsilon\Et+\omega \beta\epsilon\mu\rotm\Ht+\beta^2\epsilon\Et=0.
\end{align}
Likewise we multiply \eqref{eq:E} with $\frac{\beta}{\omega}\mu\rotm$ and insert \eqref{eq:curlgs} to obtain
\begin{align}\label{eq:H-divg}
-\mu\nabla \mu^{-1} \divg \mu\Ht-\omega \beta\epsilon\mu\rotm\Et+\beta^2\mu\Ht=0.
\end{align}
Furthermore, \eqref{eq:H-bc} and \eqref{eq:curlgs} yield the boundary condition
\begin{align}\label{eq:E-divg-bc}
\divg(\epsilon\Et) &=0 \quad\text{on }\partial\domcs.
\end{align}
On the other hand, we multiply \eqref{eq:H} with $\nvt$.
The term $\nvt\cdot\Curlg(\epsilon^{-1} \curlg \Ht)$ vanishes due to $\nvt\cdot\Curlg=\tv\cdot\gradg$ and \eqref{eq:H-bc}.
The term $-\nvt\cdot\omega \beta\rotm\Et$ vanishes due to $\nvt\cdot\rotm=\tv\cdot$ and \eqref{eq:E-bc}.
Thus it holds that
\begin{align}\label{eq:H-divg-bc}
\nvt\cdot\mu\Ht &=0 \quad\text{on }\partial\domcs.
\end{align}
\end{subequations}
%The previous steps can all be reversed (assuming $\beta\neq0$) and thus \eqref{eq:EH-curlg} and \eqref{eq:EH-divg} are indeed equivalent.
Now, multiplying \eqref{eq:E}-\eqref{eq:H} with $\epsilon_0\mu_0$ and adding \eqref{eq:E-divg}-\eqref{eq:H-divg} gives that a solution $(\beta,(\Et,\Ht))$ to \eqref{eq:EH-curl} satisfies
\begin{subequations}\label{eq:EH-curl-div}
\begin{align}
%\label{eq:E}
\epsilon_0\mu_0\Curlg(\mu^{-1} \curlg \Et) -\epsilon\gradg \epsilon^{-1} \divg \epsilon\Et
+\omega \beta(\epsilon\mu-\epsilon_0\mu_0) \rotm\Ht+(\beta^2-\omega^2\epsilon_0\mu_0)\epsilon\Et &= 0
\quad\text{in }\domcs,\\
%\label{eq:H}
\epsilon_0\mu_0\Curlg(\epsilon^{-1} \curlg \Ht) -\mu\gradg \mu^{-1} \divg \mu\Ht
-\omega \beta(\epsilon\mu-\epsilon_0\mu_0)\rotm\Et+(\beta^2-\omega^2\epsilon_0\mu_0)\mu\Ht &= 0
\quad\text{in }\domcs,\\
%\label{eq:E-bc}
\divg(\epsilon\Et) = \tv\cdot \Et &=0\quad\text{on }\partial\domcs,\\
%\label{eq:H-bc}
\curlg\Ht = \nvt\cdot\Ht &=0\quad\text{on }\partial\domcs.
\end{align}
\end{subequations}
The allure of \eqref{eq:EH-curl-div} is that $\beta^2$ and $\omega^2$ only appear as the combined term $(\beta^2-\omega^2\epsilon_0\mu_0)$.
For (quasi) homogeneous materials $\epsilon\mu=1$ the equations \eqref{eq:EH-curl-div} for $\Et$ and $\Ht$ decouple, and thus we recognize that solutions to \eqref{eq:EH-curl-div} do not necessarily satisfy \eqref{eq:EH-curl}.
Thus the relation of \eqref{eq:EH-curl} to \eqref{eq:EH-curl-div} is only one-way.
Nevertheless, this suffices to establish the following result.
\begin{lemma}\label{lem:ev-strip}
All wavenumbers $\beta$ with $\Im\beta\neq0$ satisfy $|\Re\beta|\leq |\omega|\|(\epsilon\mu-\epsilon_0\mu_0)/\sqrt{\epsilon\mu}\|_{L^\infty(\domcs)}$.
All wavenumbers $\beta$ with $\Im\beta=0$ satisfy $|\Re\beta|\leq\omega\|\sqrt{\epsilon\mu}\|_{L^\infty(\domcs)}$.
\end{lemma}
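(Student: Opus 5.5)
The plan is to derive energy identities from \eqref{eq:EH-curl-div} and from \eqref{eq:EH-divg}, testing the $\Et$-equation with $\ol{\Et}$ and the $\Ht$-equation with $\ol{\Ht}$, integrating over $\domcs$ and adding. I take $\omega\in\setR\setminus\{0\}$ throughout; the appearance of $\omega$ without absolute value in the second claim indicates this is the intended setting. Throughout I abbreviate
\[
N:=\spl\epsilon\Et,\Et\spr_\domcs+\spl\mu\Ht,\Ht\spr_\domcs>0,\qquad
z:=\spl(\epsilon\mu-\epsilon_0\mu_0)\rotm\Ht,\Et\spr_\domcs .
\]
Here $N>0$ because $(\Et,\Ht)\neq0$ for a mode: if $\Et=\Ht=0$, then \eqref{eq:modal-full} forces $\Ez=\Hz=0$.

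\textbf{Step 1 (integration by parts, using \eqref{eq:EH-curl-div}).} For the curl terms, the boundary terms vanish by $\tv\cdot\Et=0$ and $\curlg\Ht=0$ on $\partial\domcs$. For the gradient--divergence terms, I write $-\spl\epsilon\gradg\epsilon^{-1}\divg\epsilon\Et,\Et\spr_\domcs=-\spl\gradg\epsilon^{-1}\divg\epsilon\Et,\epsilon\Et\spr_\domcs$. Integration by parts then yields $\spl\epsilon^{-1}\divg\epsilon\Et,\divg\epsilon\Et\spr_\domcs$ plus a boundary term, which vanishes because $\divg(\epsilon\Et)=0$ on $\partial\domcs$. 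The $\Ht$-equation is handled the same way using $\nvt\cdot\Ht=0$; the jumps of $\epsilon,\mu$ across $\intf$ cause no trouble, since the fluxes involved are continuous in the weak sense.

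The cross terms combine via $\spl\rotm\Et,\Ht\spr_\domcs=-\ol{\spl\rotm\Ht,\Et\spr_\domcs}$ (from $\rotm^\top=-\rotm$) into $2\omega\beta\Re z$. This gives
\[
P+2\omega\beta\Re z+(\beta^2-\omega^2\epsilon_0\mu_0)N=0,
\]
where $P\geq0$ collects the curl and divergence energies.

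\textbf{Step 2 (non-real $\beta$).} The identity above is a quadratic equation in $\beta$ with real coefficients. If $\Im\beta\neq0$, then $\beta$ and $\ol\beta$ are its two roots, so $\Re\beta=-\omega\Re z/N$. Cauchy--Schwarz with the weights $\sqrt\epsilon,\sqrt\mu$ gives
\[
|z|\leq\Big\|\tfrac{\epsilon\mu-\epsilon_0\mu_0}{\sqrt{\epsilon\mu}}\Big\|_{L^\infty(\domcs)}\|\sqrt\mu\Ht\|_\domcs\|\sqrt\epsilon\Et\|_\domcs
\leq\tfrac12\Big\|\tfrac{\epsilon\mu-\epsilon_0\mu_0}{\sqrt{\epsilon\mu}}\Big\|_{L^\infty(\domcs)}N .
\]
This yields the first claim, even with an extra factor $1/2$.

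\textbf{Step 3 (real $\beta$).} This is the main obstacle, because the combined identity mixes $\epsilon_0\mu_0$ and $\epsilon\mu$ awkwardly. I will instead test only \eqref{eq:E-divg} with $\ol{\Et}$ and \eqref{eq:H-divg} with $\ol{\Ht}$, where the boundary conditions \eqref{eq:E-divg-bc} and \eqref{eq:H-divg-bc} make the boundary terms vanish. This should give
\[
Q+2\omega\beta\Re\spl\epsilon\mu\rotm\Ht,\Et\spr_\domcs+\beta^2N=0,\qquad Q\geq0,
\]
with signs still to be verified carefully. Then $\beta^2N\leq2|\omega\beta|\,|\spl\epsilon\mu\rotm\Ht,\Et\spr_\domcs|\leq|\omega\beta|\,\|\sqrt{\epsilon\mu}\|_{L^\infty(\domcs)}N$, by the same weighted Cauchy--Schwarz step as in Step 2. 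Dividing by $|\beta|N$ when $\beta\neq0$ gives $|\beta|\leq|\omega|\|\sqrt{\epsilon\mu}\|_{L^\infty(\domcs)}$, and the case $\beta=0$ is trivial.
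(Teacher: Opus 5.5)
Your proposal is correct and follows the paper's proof. You test \eqref{eq:EH-curl-div} and read off $\Re\beta=-\omega\Re z/N$; your root-pairing of the real quadratic is equivalent to the paper's step of taking the imaginary part and dividing by $2\Im\beta$, which likewise needs $\omega\in\setR$. You then test \eqref{eq:EH-divg} for real $\beta$, and the identity $Q+2\omega\beta\Re\spl\epsilon\mu\rotm\Ht,\Et\spr_\domcs+\beta^2N=0$ does hold with the signs you wrote. Your use of $ab\le\frac12(a^2+b^2)$ in the weighted Cauchy--Schwarz step also sharpens the first bound by a factor $\frac12$, which the paper does not claim.
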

\begin{proof}
Testing \eqref{eq:EH-curl-div} with $(\Et,\Ht)$, taking the imaginary part thereof and dividing by $2\Im(\beta)$ yields
\begin{align*}
0&=|\omega\Re(\spl (\epsilon\mu-\epsilon_0\mu_0)\rotm\Ht,\Et\spr_{\domcs})
+\Re(\beta)(\spl \epsilon\Et,\Et\spr_{\domcs}+\spl\mu\Ht,\Ht\spr_{\domcs})|\\
&\geq (|\Re(\beta)|-|\omega| \|(\epsilon\mu-\epsilon_0\mu_0)/\sqrt{\epsilon\mu}\|_{L^\infty(\domcs)})(\spl \epsilon\Et,\Et\spr_{\domcs}+\spl\mu\Ht,\Ht\spr_{\domcs}),
\end{align*}
from which the first claim follows.
For $\Im(\beta)=0$ we test \eqref{eq:EH-divg} with $(\Et,\Ht)$, which yields
\begin{align*}
0&\geq\spl\epsilon^{-1}\div(\epsilon\Et),\div(\epsilon\Et)\spr_\domcs
+\spl\mu^{-1}\div(\mu\Ht),\div(\mu\Ht)\spr_\domcs\\
&=|\beta\omega2\Re(\spl\rotm\Ht,\Et\spr_\domcs)
+\beta^2(\spl\epsilon\Et,\Et\spr_\domcs+\spl\mu\Ht,\Ht\spr_\domcs)|\\
&\geq |\beta|(|\beta|-|\omega|\|\sqrt{\epsilon\mu}\|_{L^\infty(\domcs)})
(\spl \epsilon\Et,\Et\spr_{\domcs}+\spl\mu\Ht,\Ht\spr_{\domcs})
\end{align*}
from which the claim follows.
\end{proof}
\begin{remark}\label{rem:HallaMonk-correction}
Lemma~\ref{lem:ev-strip} is a non-trivial improvement on the location of wavenumbers compared to results obtained with Keldyh's theory \cite[Thm.~4]{HallaMonk24}, \cite[Assertion 2.9]{Delitsyn07} which only gives a sector as a superset.
Note a typo in \cite[Thm.~4]{HallaMonk24} where $\pi$ should actually be $\pi/2$.
\end{remark}

%\bibliographystyle{abbrv}
%\bibliography{short_biblio}
\printbibliography %[heading=bibintoc]
\end{document}